\newtheorem{thm}{Theorem}[section]
\newtheorem{lem}{Lemma}[section]
\newtheorem{cor}{Corollary}[section]
\newtheorem{pro}{Proposition}[section]
\newtheorem{rem}{Remark}[section]
\theoremstyle{definition}
\def\AC{\tau(A)}
\begin{document}

\numberwithin{equation}{section}

\title[    closed positive currents  ]
 {\bf  On quaternionic Monge-Amp\`{e}re operator, closed positive currents and Lelong-Jensen type formula on quaternionic space}
\author{Dongrui Wan and Wei Wang}
\thanks{
Supported by National Nature Science Foundation in China (No.
11171298)\\ College of Mathematics and Computational Science, Shenzhen University, Shenzhen, 518060, P. R. China, Email: wandongrui@szu.edu.cn; Department of Mathematics,
Zhejiang University, Zhejiang 310027,
 P. R. China, Email:   wwang@zju.edu.cn}

\begin{abstract}In this paper, we introduce the first-order differential operators $d_0$ and $d_1$ acting on the quaternionic version of differential forms on the flat quaternionic space $\mathbb{H}^n$. The behavior of $d_0,d_1$ and $\triangle=d_0d_1$ is very similar to $\partial,\overline{\partial}$ and $\partial \overline{\partial}$ in several complex variables. The quaternionic Monge-Amp\`{e}re operator can be defined as $(\triangle u)^n$ and has a simple  explicit expression. We define the notion of closed positive currents in the quaternionic case, and extend several results in complex pluripotential theory to the quaternionic case: define the Lelong number for closed positive currents, obtain the quaternionic version of Lelong-Jensen type formula, and generalize   Bedford-Taylor theory, i.e., extend the definition of the quaternionic Monge-Amp\`{e}re operator to   locally bounded quaternionic plurisubharmonic functions and prove the corresponding convergence theorem.
\end{abstract}
\maketitle
\begin{center}
\begin{minipage}{135mm}
\end{minipage}
\end{center}
\section{Introduction}\par
Recently, people are interested in developing pluripotential theory on quaternionic manifold \cite{alesker1}-\cite{alesker6},\cite{alesker8},\cite{wang1} and more generally, on calibrated manifolds \cite{harvey2}-\cite{harvey3}. The quaternionic Monge-Amp\`{e}re operator is defined as the Moore determinant of the quaternionic Hessian of $u$:
 \begin{equation*} det\left[\frac{\partial^2u}{\partial q_j\partial \bar{q}_k}(q)\right].\end{equation*}Alesker proved in \cite{alesker1} a quaternionic version of Chern-Levine-Nirenberg estimate and extended the definition of quaternionic Monge-Amp\`{e}re operator to continuous quaternionic plurisubharmonic functions. The quaternionic Monge-Amp\`{e}re operator on hypercomplex manifolds was introduced by Alesker and Verbitsky \cite{alesker6}, which coincides with the above form when the manifold is flat.

To define the quaternionic Monge-Amp\`{e}re operator on general quaternionic manifolds, Alesker introduced in \cite{alesker2} an operator in terms of the Baston operator $\triangle$,   the first operator of the quaternionic complex on quaternionic manifolds. This operator is exactly the quaternionic Monge-Amp\`{e}re operator when the manifold is flat. Moreover, he used methods in complex geometry (twistor transformation and spectral sequences, etc.) to prove the following multiplicative property of the Baston operator:
\begin{equation}\label{1.11}\triangle(\omega\wedge\triangle\eta)=\triangle\omega\wedge\triangle\eta,\end{equation}where $\omega,\eta$ are sections of certain  bundles. The quaternionic version of Chern-Levine-Nirenberg estimate follows from (\ref{1.11}) directly.

The quaternionic complex on the flat space $\mathbb{H}^n$ is also called $k$-Cauchy-Fueter complex in \cite{Wang}. The first operator of this complex is $k$-Cauchy-Fueter operator, whose kernel consists of $k$-regular function. The Baston operator $\triangle$ is the first operator of $0$-Cauchy-Fueter complex:\begin{equation}\label{1.1}0\rightarrow C^\infty(\Omega,\mathbb{C})\xrightarrow{\triangle}C^\infty(\Omega,\wedge^2\mathbb{C}^{2n})
\xrightarrow{D}C^\infty(\Omega,\wedge^3\mathbb{C}^{2n})\rightarrow\cdots.
\end{equation}In order to study   $k$-regular functions of several quaternionic variables, the second author \cite{Wang} wrote down explicitly each operator of the $k$-Cauchy-Fueter complex in terms of real variables. Therefore we have an explicit expression of the quaternionic Monge-Amp\`{e}re operator, which is much more convenient than the definition by using  Moore determinant. Motivated by this formula, we introduce two first-order differential operators $d_0,d_1$, whose behavior is very similar to $\partial $ and $ \overline{\partial}$ in complex pluripotential theory, and write the operator $\triangle$ as $d_0d_1$. Based on this observation, we can establish the quaternionic versions of several results in the complex pluripotential theory.

To write the quaternionic Monge-Amp\`{e}re operator in terms of real variables, we will use the well known embedding of the quaternionic algebra $\mathbb{H}$ into End$(\mathbb{C}^{2})$ defined by
\begin{equation}\label{2.1}x_0+x_1\textbf{i}+x_2\textbf{j}+x_3\textbf{k}\mapsto\left(
                                                                                 \begin{array}{cc}
                                                                                   x_0+\textbf{i}x_1 & -x_2-\textbf{i}x_3 \\
                                                                                   x_2-\textbf{i}x_3 & x_0-\textbf{i}x_1 \\
                                                                                 \end{array}
                                                                               \right).
\end{equation}
Actually we will use the conjugate embedding
\begin{equation}\label{2.2}\begin{aligned}\tau:\mathbb{H}^{n}\cong\mathbb{R}^{4n}&\hookrightarrow\mathbb{C}^{2n\times2},\\ (q_0,\ldots,q_{n-1})&\mapsto \textbf{z}=(z^{j\alpha})\in\mathbb{C}^{2n\times2},
\end{aligned}\end{equation}
$q_j=x_{4j}+\textbf{i}x_{4j+1}+\textbf{j}x_{4j+2}+\textbf{k}x_{4j+3}$, $j=0,1,\ldots,2n-1, ~\alpha=0 ,1 ,$ with
\begin{equation}\label{2.3}\left(
                             \begin{array}{cc}
                               z^{00 } & z^{01 } \\
                               z^{10 } & z^{11 } \\
                               \vdots&\vdots\\
                               z^{(2l)0 } & z^{(2l)1 } \\
                               z^{(2l+1)0 } & z^{(2l+1)1 } \\
                               \vdots&\vdots\\
                               z^{(2n-2)0 } & z^{(2n-2)1 } \\
                               z^{(2n-1)0 } & z^{(2n-1)1 } \\
                             \end{array}
                           \right):=\left(
                                      \begin{array}{cc}
                                      x_{0}-\textbf{i}x_{1} & -x_{2}+\textbf{i}x_{3} \\
                                        x_{2}+\textbf{i}x_{3} & x_{0}+\textbf{i}x_{1} \\
                                        \vdots&\vdots\\
                                         x_{4l}-\textbf{i}x_{4l+1} & -x_{4l+2}+\textbf{i}x_{4l+3} \\
                                        x_{4l+2}+\textbf{i}x_{4l+3} & x_{4l}+\textbf{i}x_{4l+1} \\
                                        \vdots&\vdots\\
                                        x_{4n-4}-\textbf{i}x_{4n-3} & -x_{4n-2}+\textbf{i}x_{4n-1} \\
                                        x_{4n-2}+\textbf{i}x_{4n-1} & x_{4n-4}+\textbf{i}x_{4n-3} \\
                                      \end{array}
                                    \right).
\end{equation} Pulling back to the quaternionic
space $\mathbb{H}^n\cong\mathbb{R}^{4n}$ by the embedding
(\ref{2.3}), we define on $\mathbb{R}^{4n}$ first-order differential operators $\nabla_{j\alpha}$ as following:
\begin{equation}\label{2.4}\left(
                             \begin{array}{cc}
                               \nabla_{00 } & \nabla_{01 } \\
                               \nabla_{10 } & \nabla_{11 } \\
                               \vdots&\vdots\\
                               \nabla_{(2l)0 } & \nabla_{(2l)1 } \\
                               \nabla_{(2l+1)0 } & \nabla_{(2l+1)1 } \\
                               \vdots&\vdots\\
                               \nabla_{(2n-2)0 } & \nabla_{(2n-2)1 } \\
                               \nabla_{(2n-1)0 } & \nabla_{(2n-1)1 } \\
                             \end{array}
                           \right):=\left(
                                      \begin{array}{cc}
                                      \partial_{x_{0}}+\textbf{i}\partial_{x_{1}} & -\partial_{x_{2}}-\textbf{i}\partial_{x_{3}} \\
                                        \partial_{x_{2}}-\textbf{i}\partial_{x_{3}} & \partial_{x_{0}}-\textbf{i}\partial_{x_{1}} \\
                                         \vdots&\vdots\\
                                         \partial_{x_{4l}}+\textbf{i}\partial_{x_{4l+1}} & -\partial_{x_{4l+2}}-\textbf{i}\partial_{x_{4l+3}} \\
                                        \partial_{x_{4l+2}}-\textbf{i}\partial_{x_{4l+3}} & \partial_{x_{4l}}-\textbf{i}\partial_{x_{4l+1}} \\
                                        \vdots&\vdots\\
                                        \partial_{x_{4n-4}}+\textbf{i}\partial_{x_{4n-3}} & -\partial_{x_{4n-2}}-\textbf{i}\partial_{x_{4n-1}} \\
                                        \partial_{x_{4n-2}}-\textbf{i}\partial_{x_{4n-1}} & \partial_{x_{4n-4}}-\textbf{i}\partial_{x_{4n-3}} \\
                                      \end{array}
                                    \right).
\end{equation}
The advantage of using these operators is that $\nabla_{j\alpha}z^{k\beta}=2\delta_j^{k}\delta_{\alpha}^{\beta}$ (cf. Lemma \ref{l3.1}), i.e., $z^{k\beta}$'s can be viewed as independent variables and $\nabla_{j\alpha}$'s are derivatives with respect to these variables. The operators $\nabla_{j\alpha}$'s play very important roles in the investigating of regular functions in several quaternionic variables \cite{kang} \cite{Wang}. The Baston operator is given by the determinants of $(2\times2)$-submatrices of (\ref{2.4}).
\par

Let $\wedge^{2k}\mathbb{C}^{2n}$ be the complex exterior algebra generated by $\mathbb{C}^{2n}$, $0\leq k\leq n$. Fix a basis
$\{\omega^0,\omega^1,\ldots$, $\omega^{2n-1}\}$ of $\mathbb{C}^{2n}$. Let $\Omega$ be a domain in $\mathbb{R}^{4n}$. We
define $d_0,d_1:C_0^\infty(\Omega,\wedge^{p}\mathbb{C}^{2n})\rightarrow C_0^\infty(\Omega,\wedge^{p+1}\mathbb{C}^{2n})$ by \begin{equation}\begin{aligned}\label{2.228}&d_0F=\sum_{k,I}\nabla_{k0 }f_{I}~\omega^k\wedge\omega^I,\\
&d_1F=\sum_{k,I}\nabla_{k1 }f_{I}~\omega^k\wedge\omega^I,\\
&\triangle
F=d_0d_1F,
\end{aligned}\end{equation}for $F=\sum_{I}f_{I}\omega^I\in C_0^\infty(\Omega,\wedge^{p}\mathbb{C}^{2n})$,  where the multi-index
$I=(i_1,\ldots,i_{p})$ and
$\omega^I:=\omega^{i_1}\wedge\ldots\wedge\omega^{i_{p}}$. The operators $d_0$ and $d_1$ depend on the choice of the coordinates $x_j$'s and the basis $\{\omega^j \}$. It is known (cf. \cite{Wang}) that the second operator $D$ in the $0$-Cauchy-Fueter complex (\ref{1.1}) can be written as $DF:=\left(
                                                                                                                               \begin{array}{c}
                                                                                                                                 d_0F \\
                                                                                                                                 d_1F \\
                                                                                                                               \end{array}
                                                                                                                             \right)$.
Although $d_0,d_1$ are not exterior differential, their behavior is similar to the exterior differential: $d_0d_1=-d_1d_0$; $d_0^2=d_1^2=0$; for $F\in C_0^\infty(\Omega,\wedge^{p}\mathbb{C}^{2n})$, $G\in C_0^\infty(\Omega,\wedge^{q}\mathbb{C}^{2n})$, we have\begin{equation}\label{da}d_\alpha(F\wedge G)=d_\alpha F\wedge G+(-1)^{p}F\wedge d_\alpha G, \end{equation} $\alpha=0,1$, and \begin{equation}\label{1.2}d_0\triangle=d_1\triangle=0.
\end{equation}  (\ref{1.1}) is a complex since $D\triangle=0$.

We say $F$
is \emph{closed} if
\begin{equation}\label{2.229}d_0F=d_1F=0,~~~~i.e.,~~~~DF=0.
\end{equation}We prove that  for $u_1,\ldots,
u_n\in C^2$, $\triangle
u_1\wedge\ldots\wedge\triangle u_k$ is closed, $k=1,\ldots,n $. Moreover, it follows easily from (\ref{da}) and (\ref{1.2}) that $\triangle u_1\wedge\ldots\wedge\triangle u_n$ satisfies the following remarkable identities:\begin{equation}\label{2.37}\begin{aligned}\triangle u_1\wedge \triangle
u_2\wedge\ldots\wedge\triangle u_n&=d_0(d_1u_1\wedge \triangle
u_2\wedge\ldots\wedge\triangle u_n)\\&=d_0d_1(u_1\triangle
u_2\wedge\ldots\wedge\triangle u_n)=\triangle (u_1
\triangle u_2\wedge\ldots\wedge\triangle u_n).
\end{aligned}\end{equation}This is an improved version of Alesker's identity (\ref{1.11}).
Denote by $\wedge^{2k}_{\mathbb{R}}\mathbb{C}^{2n}$ the subspace of all real elements in $\wedge^{2k}\mathbb{C}^{2n}$ following Alesker \cite{alesker2}. They are counterparts of $(k,k)$-forms in several complex variables. In the space $\wedge^{2k}_{\mathbb{R}}\mathbb{C}^{2n}$ we define convex cones $\wedge^{2k}_{\mathbb{R}+}\mathbb{C}^{2n}$ and $SP^{2k}\mathbb{C}^{2n}$ of positive and strongly positive elements, respectively.

Denoted by $\mathcal {D}^{2k}(\Omega)$ the set of all $C_0^\infty(\Omega)$ functions   valued in $\wedge^{2k}\mathbb{C}^{2n}$.
$\eta\in\mathcal{D}^{2k}(\Omega)$ is called a \emph{positive form} (respectively, \emph{strongly positive form}) if for any $q\in\Omega$, $\eta(q)$ is a positive (respectively, strongly positive) element. Such forms are the same as the sections of certain line bundle introduced by Alesker \cite{alesker2} when the manifold is flat. But our definition of closedness is new.

Now recall that an upper semicontinuous function $u$ on
$\mathbb{H}^{n}$ is said to be \emph{plurisubharmonic} if $u$ is
subharmonic on each right quaternionic line. Denote by $PSH$ the class
of all plurisubharmonic functions (cf.
\cite{alesker1,alesker3,alesker2} for more information about
plurisubharmonic functions). We prove that for $u\in PSH\cap C^2(\Omega)$, $\triangle u$ is a closed strongly positive
$2$-form.

An element of the dual space
$(\mathcal {D}^{2n-p}(\Omega))' $ is called a~\emph{$p$-current}. A $2k$-current $T$ is said to be \emph{positive} if we have $T(\eta)\geq0$ for any strongly positive form $\eta\in\mathcal {D}^{2n-2k}(\Omega)$.
Although a $2n$-form is not an authentic differential form and we cannot integrate it, we can define \begin{equation}\label{2.24}\int_\Omega F:=\int_\Omega f dV,
\end{equation}if we write $F=f~\Omega_{2n}\in L^1(\Omega,\wedge^{2n}\mathbb{C}^{2n})$,
where $dV$ is the Lesbesgue measure and \begin{equation}\label{2.41}\Omega_{2n}:=\omega^0\wedge
\omega^1\wedge\ldots\wedge\omega^{2n-2}\wedge
\omega^{2n-1}.\end{equation}
For a $2n$-current $F=\mu~\Omega_{2n}$ with the coefficient to be measure $\mu$, define
\begin{equation}\label{2.273}\int_\Omega F:=\int_\Omega \mu.\end{equation}

Now for the $p$-current $F$, we define $d_\alpha F$ as
\begin{equation}\label{2.271}(d_\alpha F)(\eta):=-F(d_\alpha\eta),\qquad \alpha=0,1,
\end{equation}for any test $(2n-p-1)$-form $\eta$. We say a current $F$ is \emph{closed} if
\begin{equation}\label{3.30}d_0F=d_1F=0,~i.e.,~DF=0.\end{equation}$\triangle u$ is a closed positive $2$-current for any $u\in PSH(\Omega)$.

Quaternionic positive currents are discussed in \cite{alesker2} \cite{harvey4}, while closed current is discussed in \cite{alesker6} by using the usual exterior differentiation. Our definition of closedness matches positivity well, and several results of closed positive currents in several complex variables can be extended to the quaternionic case. As in the complex case, in general we cannot define the wedge product of two currents, but we can generalize Bedford-Taylor  theory \cite{bed} in   complex  analysis to our case. Let $u$ be a locally bounded $PSH$ function and let $T$ be a closed
positive $2k$-current. Define\begin{equation*}\triangle u\wedge
T:=\triangle(uT),\end{equation*}i.e., $(\triangle u\wedge
T)(\eta):=uT(\triangle\eta)$ for   test form $\eta$. We show that $\triangle u\wedge T$ is also a closed positive
current. Inductively,
\begin{equation*}\triangle
u_1\wedge\ldots\wedge\triangle u_p :=\triangle(u_1\triangle
u_2\ldots\wedge\triangle u_p )\end{equation*} is a closed
positive $2p$-current, when $u_1,\ldots,u_p\in PSH\cap
L_{loc}^\infty(\Omega)$.

Set
\begin{equation}\label{bn}\beta_n:=\sum_{l=0}^{n-1}\omega^{2l}\wedge\omega^{2l+1}\in\wedge^{2}_{\mathbb{R}+}\mathbb{C}^{2n}.\end{equation}
For a closed
positive $(2n-2p)$-current $T$,  $T\wedge\beta_n^p$ a closed
positive $ 2n $-current. Define
\begin{equation}\label{5.7}\sigma_T(a,r):=\int_{B(a,r)}T\wedge\beta_n^p\end{equation}for small $r$, $a\in\Omega$. It can be shown that
$\frac{\sigma_{T}(a,r)}{r^{4p}}$ is an increasing function of $r$. So we can introduce the \emph{Lelong number of $T$ at point $a$} as the limit
\begin{equation}\label{5.8}\nu_a(T):=\lim_{r\rightarrow0+}\frac{\sigma_{T}(a,r)}{r^{4p}}.\end{equation}

Now let $\Omega$ be a \emph{quaternionic strictly pseudoconvex domain}, i.e. that $\Omega$
has a strictly $PSH$ exhaustion function. Let $\varphi$ be a continuous
$PSH$ function on $\Omega$. Denote $$B_\varphi(r):=\{q\in\Omega;\varphi(q)<r\}\quad\text{and}\quad S_\varphi(r):=\{q\in\Omega;\varphi(q)=r\}.$$
The \emph{quaternionic boundary measure} associated to
$\varphi$ is the nonnegative Borel measure $\mu_{\varphi,r}$ defined
as
\begin{equation}\label{4.1}\mu_{\varphi,r}=\triangle_n (\varphi_r)-\chi_{\Omega\backslash
B_\varphi(r)}\triangle_n \varphi,
\end{equation}where $\varphi_r:=\max\{\varphi,r\}$,  and $\triangle_n \varphi$ is the
coefficient of the $2n$-current $(\triangle\varphi)^n$, i.e.
\begin{equation*}
     (\triangle\varphi)^n=\triangle_n \varphi~\Omega_{2n} .
\end{equation*}
The measure $\mu_{\varphi,r}$ supports on
$S_\varphi(r)$. At last we prove the Lelong-Jensen type formula:\begin{equation*}\mu_{\varphi,r}(V)-\int_{B_\varphi(r)}V(\triangle\varphi)^n
 =\int_{-\infty}^{r}dt\int_{B_\varphi(t)}\triangle V\wedge(\triangle\varphi)^{n-1}.
\end{equation*}
The Lelong-Jensen formula, Lelong number and boundary measure play very important roles in complex pluripotential theory (cf. \cite{Demailly1987 ,Demailly1991,demailly}).

The quaternionic Monge-Amp\`{e}re operator can be written as
\begin{equation}\label{0.1}\triangle_nu=\sum_{i_1,j_1,\ldots }\delta^{i_1j_1\ldots
i_nj_n}_{01\ldots(2n-1)}\nabla_{i_10}\nabla_{j_11}u\ldots\nabla_{i_n0}\nabla_{j_n1}u,\end{equation}
where \begin{equation}\label{2.111}\delta^{i_1j_1\ldots
i_nj_n}_{01\ldots(2n-1)}:=
          \text{the~ sign~of~the~permutation~from}\,(i_1,j_1,\ldots
i_n,j_n)\,\text{to}\,(0,1,\ldots,2n-1),
                   \end{equation}
    if $\{i_1,j_1,\ldots,
i_n,j_n\}=\{0,1,\ldots,2n-1\}$; otherwise, $\delta^{i_1j_1\ldots
i_nj_n}_{01\ldots(2n-1)}=0$. We have the remarkable identities:\begin{equation}\label{0.2}\begin{aligned}\triangle_nu&=\sum_{i_1,j_1,\ldots }\nabla_{i_10}\left[\delta^{i_1j_1\ldots
i_nj_n}_{01\ldots(2n-1)}\nabla_{j_11}u\ldots\nabla_{i_n0}\nabla_{j_n1}u\right]\\
&=\sum_{i_1,j_1,\ldots }\nabla_{i_10}\nabla_{j_11}\left[\delta^{i_1j_1\ldots
i_nj_n}_{01\ldots(2n-1)}u\nabla_{i_20}\nabla_{j_21}u\ldots\nabla_{i_n0}\nabla_{j_n1}u\right].\end{aligned}\end{equation}
These identities are equivalent to (\ref{2.37}), and are very similar to that for real $k$-Hessian operators (cf. (2.9) in \cite{wan2}). Based on (2.9) in \cite{wan2},  we established several results in pluripotential theory for $k$-convex functions and $k$-Hessian operators in \cite{wan2}. We proved the identities (\ref{0.2}) first and realized that we can extend many results in pluripotential theory to the quaternionic case as in \cite{wan2}. Later, we found the simplified version of these results by introducing the quaternionic version of differential forms. Furthermore, these forms allow us to develop the theory of closed positive currents in the quaternionic case.

The paper is organized as
follows. The propositions on quaternionic linear algebra we need are collected in Section 2.1, and we establish useful properties of the operators $d_0,d_1$ and the Baston operator $\triangle$. In section 3, we define the notions of closed positive forms and closed positive currents, and prove that $\triangle u$ is a closed positive $2$-current for any $PSH$ function $u$. And we show that when functions $u_1,\ldots,u_k$ are locally bounded, $\triangle u_1\wedge\ldots\wedge\triangle u_k $ is a well defined closed positive current and is continuous in decreasing sequences. In Section 4 we introduce the Lelong number of closed positive currents. In the last section, we discuss the quaternionic boundary measure and establish the quaternionic version of Lelong-Jensen type formula. In Appendix A, we give an elementary proof of the coincidence of $\triangle_n$ in (\ref{0.1}) with the quaternionic Monge-Amp\`{e}re operator, which was proved by Alesker by an abstract method (Proposition 7.1 in \cite{alesker2}).

 \section{the operators $d_0,d_1$ and the Baston operator $\triangle$}\par
\subsection{The complex matrix associated to a quaternionic matrix}
We begin with the fact that the quaternionic algebra is isomorphic to a subalgebra of complex $(2\times2)$-matrices. Recall the conjugate embedding $\tau:\mathbb{H}\hookrightarrow\mathbb{C}^{2\times2}$ given by (\ref{2.3}).
\begin{lem}\label{l2.1}$(1)$
$\tau(q_0q_1)=\tau(q_0)\tau(q_1),$ for $q_0,q_1\in \mathbb{H}$.\\
$(2)$ $\tau(\overline{q_0})=\overline{\tau(q_0)}^t$, for $q_0\in
\mathbb{H}$.
\end{lem}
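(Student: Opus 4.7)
The plan is to treat both parts as direct computations via the explicit formula in (\ref{2.3}), with one small structural observation that lets part (1) be handled cleanly rather than checking nine cases of Hamilton's relations.

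For part (2), I would simply write $q_0 = x_0 + x_1\mathbf{i} + x_2\mathbf{j} + x_3\mathbf{k}$ and compute both sides. On the left, $\overline{q_0} = x_0 - x_1\mathbf{i} - x_2\mathbf{j} - x_3\mathbf{k}$, and applying (\ref{2.3}) yields
\[
\tau(\overline{q_0}) = \begin{pmatrix} x_0 + \mathbf{i}x_1 & x_2 - \mathbf{i}x_3 \\ -x_2 - \mathbf{i}x_3 & x_0 - \mathbf{i}x_1 \end{pmatrix}.
\]
On the right, conjugating the entries of $\tau(q_0)$ and then transposing gives exactly the same matrix. The only thing to be careful about is not confusing complex conjugation of a matrix entry (which only flips the sign of $\mathbf{i}$) with quaternionic conjugation.

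For part (1), rather than verifying $\tau(q)\tau(q') = \tau(qq')$ on all sixteen products of basis pairs from $\{1,\mathbf{i},\mathbf{j},\mathbf{k}\}$, I would compare $\tau$ with the standard embedding $\sigma$ from (\ref{2.1}). Inspecting the two formulas entry by entry shows that $\tau(q) = \overline{\sigma(q)}$, where the bar denotes entrywise complex conjugation. Since entrywise conjugation commutes with both matrix addition and matrix multiplication (because $\overline{ab}=\overline{a}\,\overline{b}$ and $\overline{a+b}=\overline{a}+\overline{b}$ in $\mathbb{C}$), one gets $\tau(q_0 q_1) = \overline{\sigma(q_0 q_1)} = \overline{\sigma(q_0)\sigma(q_1)} = \overline{\sigma(q_0)}\cdot\overline{\sigma(q_1)} = \tau(q_0)\tau(q_1)$, provided one knows $\sigma$ is multiplicative. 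The multiplicativity of the standard embedding $\sigma$ is a classical fact; if it needs to be justified here, the cleanest way is $\mathbb{R}$-bilinearity reducing the check to verifying $\sigma(\mathbf{i})\sigma(\mathbf{j}) = \sigma(\mathbf{k})$ (and cyclic permutations) and $\sigma(\mathbf{i})^2 = \sigma(\mathbf{j})^2 = \sigma(\mathbf{k})^2 = -I$, each of which is a single $2\times 2$ matrix product.

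There is no genuine obstacle here; the lemma is a bookkeeping statement. The only place one must be attentive is the identification $\tau = \overline{\sigma}$, since the paper distinguishes between the standard embedding in (\ref{2.1}) and the conjugate embedding in (\ref{2.3}), and it would be easy to conflate quaternionic conjugation with the entrywise complex conjugation that links $\sigma$ and $\tau$.
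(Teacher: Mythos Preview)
Your proof is correct. For part (2) you do exactly what the paper does: write out both sides from the definition and observe they coincide.

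For part (1) your route differs from the paper's. The paper simply writes $q_0=x_0+\mathbf{i}x_1+\mathbf{j}x_2+\mathbf{k}x_3$, $q_1=x_4+\mathbf{i}x_5+\mathbf{j}x_6+\mathbf{k}x_7$, computes $q_0q_1$ using $\mathbf{j}(a+\mathbf{i}b)=(a-\mathbf{i}b)\mathbf{j}$, and then multiplies out the two $2\times 2$ matrices $\tau(q_0)\tau(q_1)$ to see the entries match. Your argument instead exploits the structural relation $\tau(q)=\overline{\sigma(q)}$ between the conjugate embedding (\ref{2.3}) and the standard one (\ref{2.1}), together with the fact that entrywise complex conjugation is a ring homomorphism on complex matrices. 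This is more conceptual and avoids the explicit eight-variable expansion; on the other hand, it shifts the work to the multiplicativity of $\sigma$, which (as you note) still requires checking Hamilton's relations on the image matrices if one does not take it as known. Either way the total computational content is similar, but your version explains \emph{why} $\tau$ inherits multiplicativity rather than verifying it by inspection.
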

\begin{proof}Let $q_0=x_0+\textbf{i}x_1+\textbf{j}x_2+\textbf{k}x_3$
and $q_1=x_4+\textbf{i}x_5+\textbf{j}x_6+\textbf{k}x_7$. Then
\begin{equation*}\begin{aligned}q_0q_1=&[x_0+\textbf{i}x_1+\textbf{j}(x_2-\textbf{i}x_3)][x_4+\textbf{i}x_5+\textbf{j}(x_6-\textbf{i}x_7)]\\
=&(x_0+\textbf{i}x_1)(x_4+\textbf{i}x_5)-(x_2+\textbf{i}x_3)(x_6-\textbf{i}x_7)\\
&\qquad+\textbf{j}[(x_0-\textbf{i}x_1)(x_6-\textbf{i}x_7)+(x_2-\textbf{i}x_3)(x_4+\textbf{i}x_5)].
\end{aligned}\end{equation*}It follows that $\tau(q_0q_1)=$\begin{equation*}\begin{aligned}& \left(
                                                                           \begin{array}{cc}
                                                                             (x_0-\textbf{i}x_1)(x_4-\textbf{i}x_5)-(x_2-\textbf{i}x_3)(x_6+\textbf{i}x_7) & -(x_0-\textbf{i}x_1)(x_6-\textbf{i}x_7)-(x_2-\textbf{i}x_3)(x_4+\textbf{i}x_5) \\
                                                                             (x_0+\textbf{i}x_1)(x_6+\textbf{i}x_7)+(x_2+\textbf{i}x_3)(x_4-\textbf{i}x_5) & (x_0+\textbf{i}x_1)(x_4+\textbf{i}x_5)-(x_2+\textbf{i}x_3)(x_6-\textbf{i}x_7) \\
                                                                           \end{array}
                                                                         \right)\\
=&\left(
    \begin{array}{cc}
     x_{0}-\textbf{i}x_{1} & -x_{2}+\textbf{i}x_{3} \\
                                        x_{2}+\textbf{i}x_{3} & x_{0}+\textbf{i}x_{1} \\
    \end{array}
  \right)\left(
           \begin{array}{cc}
            x_{4}-\textbf{i}x_{5} & -x_{6}+\textbf{i}x_{7} \\
                                        x_{6}+\textbf{i}x_{7} & x_{4}+\textbf{i}x_{5} \\
           \end{array}
         \right)=\tau(q_0)\tau(q_1).
\end{aligned}\end{equation*}And
$\overline{q_0}=x_0-\textbf{i}x_1-\textbf{j}x_2-\textbf{k}x_3$,
\begin{equation*}\tau(\overline{q_0})=\left(
    \begin{array}{cc}
     x_{0}+\textbf{i}x_{1} & x_{2}-\textbf{i}x_{3} \\
                                        -x_{2}-\textbf{i}x_{3} & x_{0}-\textbf{i}x_{1} \\
    \end{array}
  \right)= \overline{\left(\begin{array}{cc}
     x_{0}-\textbf{i}x_{1} & x_{2}+\textbf{i}x_{3} \\
                                        -x_{2}+\textbf{i}x_{3} & x_{0}+\textbf{i}x_{1} \\
    \end{array}
  \right)}=\overline{\tau(q_0)}^t.
\end{equation*}\end{proof}
Now we extend the definition of $\tau$ to a mapping from quaternionic $(l\times m)$-matrices to complex $(2l\times 2m)$-matrices. Let $A=(A_{jk})_{l\times m}$ be a quaternionic $(l\times m)$-matrix and write
$A_{jk}=a_{jk}^0+\textbf{i}a_{jk}^1+\textbf{j}a_{jk}^2+\textbf{k}a_{jk}^3\in\mathbb{H}$.
We define $\tau(A)$ is the complex $(2l\times 2m)$-matrix
$(\tau(A_{jk}))_{j=0,\ldots,l-1}^{k=0,\ldots,m-1},$ i.e.,\begin{equation}\label{tau}\tau(A)=\left(
                                                              \begin{array}{cccc}
                                                                \tau(A_{00}) & \tau(A_{01}) & \cdots & \tau(A_{0(m-1)}) \\
                                                                \tau(A_{10}) & \tau(A_{11}) & \cdots & \tau(A_{1(m-1)}) \\
                                                                \cdots & \cdots & \cdots & \cdots \\
                                                                \tau(A_{(l-1)0}) & \tau(A_{(l-1)1}) & \cdots & \tau(A_{(l-1)(m-1)}) \\
                                                              \end{array}
                                                            \right),
\end{equation}where $\tau(A_{jk})$ is the complex $(2\times2)$-matrix \begin{equation}\label{2.301}\left(
                                       \begin{array}{cc}
                                         a_{jk}^0-\textbf{i}a_{jk}^1 & -a_{jk}^2+\textbf{i}a_{jk}^3 \\
                                         a_{jk}^2+\textbf{i}a_{jk}^3 & a_{jk}^0+\textbf{i}a_{jk}^1 \\
                                       \end{array}
                                     \right).\end{equation}

Denote by $\text{GL}_\mathbb{H}(n)$ the set of all invertible quaternionic $(n\times
n)$-matrices, and denote by $\text{U}_{\mathbb{H}}(n)$ the set of all unitary quaternionic $(n\times
n)$-matrices, i.e., $\text{U}_{\mathbb{H}}(n)=\{A\in\text{GL}_\mathbb{H}(n),\overline{A}^tA=A\overline{A}^t=I_{n\times n}\}$, where $(\overline{A}^t)_{jk}=\overline{A_{kj}}$.
\begin{pro}\label{p2.1} $(1)$ $\tau(AB)=\tau(A)\tau(B)$ for a quaternionic $(p\times m)$-matrix $A$ and a quaternionic $(m\times l)$-matrix $B$. In particular, for $q'=Aq,$ $q,q' \in\mathbb{H}^n$, $A\in$ $\text{GL}_\mathbb{H}(n)$, we have
$\tau(q')=\tau(A)\tau(q)$ as complex $(2n\times2)$-matrix.\\
$(2)$ Let $A$ be in $\text{GL}_\mathbb{H}(n)$, then \begin{equation}\label{2.231}J\overline{\tau(A)}=\tau(A)J,\end{equation} where \begin{equation}\label{2.302}J=\left(
                                       \begin{array}{ccccccc}
                                       0 & 1 &   &   &   \\
                                         -1 & 0 &   &   &   \\
                                           &   & 0 & 1 &   \\
                                           &   & -1 & 0 &   \\
                                           &   &   &   & \ddots \\
                                           & & &&&0&1\\
                                           & & &&&-1&0\\\end{array}
                                     \right).
\end{equation}
$(3)$  $\tau\left(\overline{A}^t\right)=\overline{\tau(A)}^t$ for a quaternionic $(n\times n)$-matrix $A$. If $A\in\text{U}_{\mathbb{H}}(n)$, $\tau(A)$ is symplectic, i.e., $\tau(A)J\tau(A)^t=J$.
\end{pro}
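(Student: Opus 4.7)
For part (1), the plan is to lift Lemma \ref{l2.1}(1) from scalars to matrices by a block computation. Writing $A=(A_{jl})$ and $B=(B_{lk})$, the $(j,k)$-block of $\tau(AB)$ is $\tau\!\left(\sum_l A_{jl}B_{lk}\right)$. Since the embedding \eqref{2.301} is $\mathbb{R}$-linear in the $x$-coordinates, $\tau$ is additive, and then Lemma \ref{l2.1}(1) gives $\tau\!\left(\sum_l A_{jl}B_{lk}\right)=\sum_l \tau(A_{jl})\tau(B_{lk})$, which is the $(j,k)$-block of the product $\tau(A)\tau(B)$ by the ordinary block-matrix multiplication rule. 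The consequence for $q'=Aq$ is the special case where $q,q'$ are regarded as $(n\times 1)$ quaternionic matrices.

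For part (2), the main task is the scalar identity $J_1\overline{\tau(q)}=\tau(q)J_1$ where $J_1=\bigl(\begin{smallmatrix}0&1\\-1&0\end{smallmatrix}\bigr)$ and $q\in\mathbb{H}$. This is a direct $(2\times 2)$ calculation using \eqref{2.301}: both sides equal $\bigl(\begin{smallmatrix}x_2-\mathbf{i}x_3 & x_0-\mathbf{i}x_1\\-x_0-\mathbf{i}x_1 & x_2+\mathbf{i}x_3\end{smallmatrix}\bigr)$. Once this is in hand, the matrix statement follows from the block structure of $J$ in \eqref{2.302}: multiplying a $(2\times 2)$-block matrix on the right by $J$ acts as $J_1$ on each column block, while multiplying on the left by $J$ acts as $J_1$ on each row block. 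Hence the $(j,k)$-block of $\tau(A)J$ is $\tau(A_{jk})J_1$ and the $(j,k)$-block of $J\overline{\tau(A)}$ is $J_1\overline{\tau(A_{jk})}$, which agree by the scalar identity.

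For the first assertion of part (3), note that $(\overline{A}^t)_{jk}=\overline{A_{kj}}$ in the quaternionic sense, so the $(j,k)$-block of $\tau(\overline{A}^t)$ is $\tau(\overline{A_{kj}})=\overline{\tau(A_{kj})}^t$ by Lemma \ref{l2.1}(2). On the other hand, the $(j,k)$-block of $\overline{\tau(A)}^t$ is the conjugate transpose of the $(k,j)$-block of $\tau(A)$, namely $\overline{\tau(A_{kj})}^t$. The two agree block-by-block.

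For the symplectic statement, I plan to combine (2) and (3) with the unitarity $\overline{A}^tA=I$. Transposing $\tau(A)J=J\overline{\tau(A)}$ and using $J^t=-J$ gives $J\tau(A)^t=\overline{\tau(A)}^tJ$. Thus
\begin{equation*}
\tau(A)\,J\,\tau(A)^t=\tau(A)\,\overline{\tau(A)}^t\,J=\tau(A)\,\tau(\overline{A}^t)\,J=\tau(A\overline{A}^t)\,J=\tau(I)\,J=J,
\end{equation*}
where I used (3) in the second equality and (1) in the third. The only non-routine step is the scalar verification in (2); everything else is block bookkeeping organized around that identity and the two algebraic properties from Lemma \ref{l2.1}.
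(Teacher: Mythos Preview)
Your argument is correct throughout. Parts (1) and (3), as well as the symplectic conclusion, follow the paper's own route almost verbatim: block-by-block reduction to Lemma~\ref{l2.1}, and then combining (1)--(3) with unitarity.

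Part (2) is where you genuinely diverge. The paper proves \eqref{2.231} indirectly, by letting $\tau(A)$ act on the two columns of $\tau(q)$ for $q\in\mathbb{H}^n$ and exploiting the built-in conjugate relation $z^{(2l)1}=-\overline{z^{(2l+1)0}}$, $z^{(2l+1)1}=\overline{z^{(2l)0}}$ between those columns; comparing the two resulting expressions for the conjugated first column yields $J\overline{\tau(A)}=\tau(A)J$. You instead verify the scalar identity $J_1\overline{\tau(q)}=\tau(q)J_1$ by a direct $2\times 2$ computation and then use that $J$ is block-diagonal with $J_1$ blocks to lift it entrywise. Your route is shorter and more elementary, and makes it transparent that the identity holds for \emph{any} quaternionic matrix $A$, not just invertible ones; the paper's route has the compensating virtue of explaining structurally \emph{why} the identity holds, namely that the second column of $\tau(\cdot)$ is always $-J$ applied to the conjugate of the first.
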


\begin{proof}(1)
Let $C=AB$. By Lemma \ref{l2.1} (1),
\begin{equation*}\begin{aligned}&\tau(C_{jt})=\tau\left(\sum_{k=0}^{n-1}A_{jk}B_{kt}\right)=\sum_{k=0}^{n-1}\tau(A_{jk})\tau(B_{kt})\\&=
\left(\begin{array}{ccccc}
\tau(A_{j0})_{00} & \tau(A_{j0})_{01}&\ldots  &\tau(A_{j(n-1)})_{00}&\tau(A_{j(n-1)})_{01}\\
\tau(A_{j0})_{10} & \tau(A_{j0})_{11}&\ldots  &\tau(A_{j(n-1)})_{10}&\tau(A_{j(n-1)})_{11}\\
\end{array}
\right)\cdot\left(
\begin{array}{cc}
\tau(B_{0t})_{00}&\tau(B_{0t})_{01} \\
\tau(B_{0t})_{10}&\tau(B_{0t})_{11}\\
\vdots\\
\tau(B_{(n-1)t})_{00}&\tau(B_{(n-1)t})_{01} \\
\tau(B_{(n-1)t})_{10}&\tau(B_{(n-1)t})_{11}\\
\end{array}
\right).
\end{aligned}\end{equation*}Consequently, $\tau(C)=\tau(A)\tau(B).$ \par (2)
Denote $\tau(q ):=(z^{j\alpha}),$  $\tau(q'):=(w^{j\alpha}),$ $j=0,1,\ldots,2n-1,~\alpha=0 ,1 .$ Then we   have the equation
$\left(
    \begin{array}{c}
      \vdots \\
      w^{j0 } \\
      \vdots \\
    \end{array}
  \right)=\tau(A)\left(
    \begin{array}{c}
      \vdots \\
      z^{j0 } \\
      \vdots \\
    \end{array}
  \right).$ Take complex conjugate to get
\begin{equation}\label{2.6}\left(
                             \begin{array}{c}
                               \vdots \\
                                \overline{w^{j0 }}\\
                              \vdots \\
                             \end{array}
                           \right)
=\overline{\tau(A)}\left(
                     \begin{array}{c}
                       \vdots \\
                       \overline{z^{j0 }} \\
                       \vdots \\
                     \end{array}
                   \right).\end{equation}
But we also have $\left(
    \begin{array}{c}
      \vdots \\
      w^{j1 } \\
      \vdots \\
    \end{array}
  \right)=\tau(A)\left(
    \begin{array}{c}
      \vdots \\
      z^{j1 } \\
      \vdots \\
    \end{array}
  \right).$ Note that $w^{(2l)1}=-\overline{w^{(2l+1)0 }}$ and $w^{(2l+1)1}=\overline{w^{(2l)0}}$ by (\ref{2.3}) and similar relation holds for $z^{j\alpha}$. It follows that
\begin{equation}\label{2.5}\left(
                   \begin{array}{c}
                     \vdots \\
                     -\overline{w^{(2l+1)0 } }\\
                     \overline{w^{(2l)0 }} \\
                     \vdots \\
                   \end{array}
                 \right)=\tau(A)\left(
                                         \begin{array}{c}
                                           \vdots \\
                                           -\overline{z^{(2l+1)0 }} \\
                                           \overline{z^{(2l)0 }} \\
                                           \vdots \\
                                         \end{array}
                                       \right).
\end{equation}But \begin{equation*}-J\left(
                   \begin{array}{c}
                     \vdots \\
                     \overline{w^{(2l)0 }} \\
                     \overline{w^{(2l+1)0 } }\\
                     \vdots \\
                   \end{array}
                 \right)=\left(
                   \begin{array}{c}
                     \vdots \\
                     -\overline{w^{(2l+1)0 }} \\
                     \overline{w^{(2l)0 }} \\
                     \vdots \\
                   \end{array}
                 \right),
\end{equation*}(\ref{2.5}) can be written as
\begin{equation}\label{2.7}-J\left(
                               \begin{array}{c}
                                 \vdots \\
                                  \overline{w^{j0 }}\\
                                 \vdots \\
                               \end{array}
                             \right)=\tau(A)(-J)\left(
                                                  \begin{array}{c}
                                                    \vdots \\
                                                    \overline{z^{j0 }} \\
                                                    \vdots \\
                                                  \end{array}
                                                \right).\end{equation}
It follows from (\ref{2.6}) and (\ref{2.7}) that
$J\overline{\tau(A)}=\tau(A)J$. \par

(3) Since $A$ is in
$\text{U}_{\mathbb{H}}(n)$, $\overline{A}^tA=I_n$. By Lemma
\ref{l2.1},
$\tau(\overline{q_1}q_2)=\overline{\tau(q_1)}^t\tau(q_2)$ for
$q_1,q_2\in\mathbb{H}$. Then $\tau\left(\overline{A}^t\right)=\overline{\tau(A)}^t$ and
$I_{2n}=\tau\left(\overline{A}^tA\right)=\tau\left(\overline{A}^t\right)\tau(A)=\overline{\tau(A)}^t\tau(A).
$ It follows that $\tau(A)$ is a complex unitary $(2n\times 2n)$-matrix. This together with (\ref{2.231}) implies $\tau(A)J\tau(A)^t=J$, i.e.,
$\tau(A)$ is symplectic.\end{proof}

 Fix a basis
$\{\omega^0,\omega^1,\ldots,\omega^{2n-1}\}$ of $\mathbb{C}^{2n}$. For $A\in\text{GL}_{\mathbb{H}}(n)$, define the \emph{induced $\mathbb{C}$-linear transformation} of $A$ on $\mathbb{C}^{2n}$ as:
\begin{equation}\label{A.}A.\omega^p=\sum_{j=0}^{2n-1}\AC_{pj} \omega^j,\end{equation}and define the \emph{induced $\mathbb{C}$-linear transformation} of $A$ on $\wedge^{2k}\mathbb{C}^{2n}$ as: $A.(\omega^0\wedge\omega^1\wedge\ldots\wedge\omega^{2k-1})=A.\omega^0\wedge A.\omega^1\wedge\ldots \wedge A.\omega^{2k-1}.$
Therefore for $A\in\text{U}_{\mathbb{H}}(n)$,
\begin{equation}\label{2.35}\begin{aligned}A.\beta_n&=\sum_{i,j}\left[\AC_{0j}\AC_{1i}+\AC_{2j}\AC_{3i}+\ldots+\AC_{(2n-2)j}\AC_{(2n-1)i}\right]\omega^j\wedge \omega^i\\
&=\sum_{j<i}\left[\AC^t_{j0}\AC_{1i}-\AC^t_{j1}\AC_{0i}+\ldots+\AC^t_{j(2n-2)}\AC_{(2n-1)i}\right.\\&\qquad\qquad\qquad\left.-{\AC}^t_{j(2n-1)}\AC_{(2n-2)i}\right]\omega^j\wedge
\omega^i\\&=\sum_{j<i}\left(\AC^tJ\AC\right)_{ji}\omega^j\wedge
\omega^i=\sum_{j<i}J_{ji}\omega^j\wedge \omega^i =\beta_n,
\end{aligned}\end{equation}where the fourth identity follows from Proposition \ref{p2.1}(3) and $\beta_n$ is given by (\ref{bn}). Consequently $A.(\wedge^n
\beta_n)=\wedge^n \beta_n$, i.e.,
\begin{equation}\label{2.38}A.\Omega_{2n}=\Omega_{2n},\end{equation}where $\Omega_{2n}$ is given by (\ref{2.41}). This means that $\Omega_{2n}$ is invariant under
unitary transformations on $\mathbb{H}^n$.\par\vskip4mm

\subsection{The operators $d_0,d_1$ and $\triangle$}
\begin{pro}\label{p1.1}$($1$)$ $d_0d_1=-d_1d_0$.\\
$($2$)$ $d_0^2=d_1^2=0$.\\
$($3$)$ For $F\in C_0^\infty(\Omega,\wedge^{p}\mathbb{C}^{2n})$, $G\in C_0^\infty(\Omega,\wedge^{q}\mathbb{C}^{2n})$, we have\begin{equation*}d_\alpha(F\wedge G)=d_\alpha F\wedge G+(-1)^{p}F\wedge d_\alpha G,\qquad \alpha=0,1.\end{equation*}
\end{pro}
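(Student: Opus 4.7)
The plan is to prove all three identities by direct expansion, using only two elementary facts: each $\nabla_{j\alpha}$ is a $\mathbb{C}$-linear combination of ordinary partial derivatives, so the operators $\nabla_{k\alpha}$ commute among themselves; and the wedge $\omega^k \wedge \omega^l$ is antisymmetric in the indices $k,l$. These two observations together will make the proofs essentially identical in spirit to the classical proofs that $d^2=0$ and that $d$ satisfies the graded Leibniz rule.

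For (2), I would write out $d_0^2 F = \sum_{k,l,I} \nabla_{k0}\nabla_{l0} f_I\, \omega^k \wedge \omega^l \wedge \omega^I$. Renaming $k \leftrightarrow l$ turns the coefficient $\nabla_{k0}\nabla_{l0} f_I$ into $\nabla_{l0}\nabla_{k0} f_I$, which is the same by commutativity of partials, while $\omega^k \wedge \omega^l$ picks up a minus sign. Thus the sum equals its own negative and vanishes. The argument for $d_1^2 = 0$ is identical. For (1), the same expansion gives $d_0 d_1 F = \sum_{k,l,I} \nabla_{k0}\nabla_{l1} f_I\, \omega^k \wedge \omega^l \wedge \omega^I$ and $d_1 d_0 F = \sum_{k,l,I} \nabla_{k1}\nabla_{l0} f_I\, \omega^k \wedge \omega^l \wedge \omega^I$. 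Swapping the summation indices $k \leftrightarrow l$ in the second sum, then using $\nabla_{l1}\nabla_{k0} = \nabla_{k0}\nabla_{l1}$ and $\omega^l \wedge \omega^k = -\omega^k \wedge \omega^l$, one obtains $d_1 d_0 F = -d_0 d_1 F$.

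For (3), I would write $F \wedge G = \sum_{I,J} f_I g_J\, \omega^I \wedge \omega^J$ and apply $d_\alpha$ term by term. The ordinary Leibniz rule for the first-order operator $\nabla_{k\alpha}$ splits $\nabla_{k\alpha}(f_I g_J) = (\nabla_{k\alpha} f_I) g_J + f_I (\nabla_{k\alpha} g_J)$. The first piece yields $\sum_{k,I,J} (\nabla_{k\alpha} f_I) g_J\, \omega^k \wedge \omega^I \wedge \omega^J = d_\alpha F \wedge G$ directly. In the second piece, to recognize the factor $d_\alpha G$ one must move $\omega^k$ past $\omega^I$; since $\omega^I$ has degree $p$, this produces the sign $(-1)^p$, yielding $(-1)^p F \wedge d_\alpha G$.

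There is essentially no obstacle here: the statements are formal consequences of graded-commutativity of the exterior algebra and ordinary commutativity of partial derivatives. The only place where care is needed is in tracking the sign $(-1)^p$ in part (3), which comes solely from commuting a single basis vector $\omega^k$ past a degree-$p$ form.
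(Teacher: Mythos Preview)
Your proposal is correct and follows essentially the same approach as the paper's own proof: both use the commutativity $\nabla_{i\alpha}\nabla_{j\beta}=\nabla_{j\beta}\nabla_{i\alpha}$ of the constant-coefficient operators together with the antisymmetry of the wedge product to obtain (1) and (2), and the ordinary Leibniz rule for $\nabla_{k\alpha}$ plus the sign from commuting $\omega^k$ past $\omega^I$ for (3). The arguments and even the index manipulations are virtually identical.
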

\begin{proof} (1) For any $F=\sum_If_I\omega^I$, we have
\begin{equation*}d_0d_1F=\sum_{i,j,I}\nabla_{i0 }\nabla_{j1 }f_{I}~\omega^i\wedge\omega^j\wedge\omega^I=-\sum_{i,j,I}\nabla_{j1 }\nabla_{i0 }f_{I}~\omega^j\wedge\omega^i\wedge\omega^I=-d_1d_0F,
\end{equation*}by \begin{equation}\label{2.22}\nabla_{i\alpha}\nabla_{j\beta} =\nabla_{j\beta}\nabla_{i\alpha} ,\qquad\text{for}~\alpha,\beta=0,1.\end{equation}${\nabla_{j\alpha}}$'s are mutually commutative since they are scalar differential operators of constant coefficients.\\
(2) \begin{equation*}d_\alpha^2F=\sum_{i,j,I}\nabla_{i\alpha}\nabla_{j\alpha}f_I~\omega^i\wedge\omega^j\wedge
\omega^I=0,\end{equation*} by (\ref{2.22}) and $\omega^i\wedge\omega^j\wedge
\omega^I=-\omega^j\wedge\omega^i\wedge \omega^I.$\\
(3) Write $G=\sum_Jg_J\omega^J$. Since $\omega^k\wedge\omega^I\wedge\omega^J=(-1)^{p}\omega^I\wedge\omega^k\wedge\omega^J$, we have
\begin{equation*}\begin{aligned}d_\alpha(F\wedge G)=&\sum_{k,I,J}\nabla_{k\alpha}(f_Ig_J)~\omega^k\wedge\omega^I\wedge\omega^J\\
=&\sum_{k,I,J}[\nabla_{k\alpha}(f_I)g_J+f_I\nabla_{k\alpha}(g_J)]~\omega^k\wedge\omega^I\wedge\omega^J\\
=&\sum_{k,I }\nabla_{k\alpha}(f_I)~\omega^k\wedge\omega^I\wedge \sum_{ J} g_J\omega^J+(-1)^{p}\sum_{k,I }f_I\omega^I\wedge\sum_{ J}\nabla_{k\alpha }(g_J)\omega^k\wedge\omega^J\\
=&d_\alpha F\wedge G+(-1)^{p}F\wedge d_\alpha G.
\end{aligned}\end{equation*}
\end{proof}
It follows from (\ref{2.228}) and Proposition \ref{p1.1}(1) that \begin{equation}\label{2.235}\triangle
F=\frac{1}{2}\sum_{i,j,I}(\nabla_{i0 }\nabla_{j1 }-\nabla_{i1 }\nabla_{j0 })f_{I}~\omega^i\wedge\omega^j\wedge\omega^I.
\end{equation}
Now for a function $u\in C^2$ we
define\begin{equation}\label{2.10}\triangle_{ij}u:=\frac{1}{2}(\nabla_{i0 }\nabla_{j1 }u-\nabla_{i1 }\nabla_{j0 }u).\end{equation}
$2\triangle_{ij}$ is the determinant of $(2\times2)$-submatrix of $i$-th and $j$-th rows in (\ref{2.4}). Then we can write \begin{equation}\label{2.1000}\triangle u=\sum_{i,j=0}^{2n-1}\triangle_{ij}u~\omega^i\wedge
\omega^j,\end{equation}and for $u_1,\ldots,u_n\in
C^2$,
\begin{equation}\label{2.11}\begin{aligned}\triangle
u_1\wedge\ldots\wedge\triangle
u_n&=\sum_{i_1,j_1,\ldots}\triangle_{i_1j_1}u_1\ldots\triangle_{i_nj_n}u_n~\omega^{i_1}\wedge
\omega^{j_1}\wedge\ldots\wedge \omega^{i_n}\wedge
\omega^{j_n}\\&=\sum_{i_1,j_1,\ldots}\delta^{i_1j_1\ldots
i_nj_n}_{01\ldots(2n-1)}\triangle_{i_1j_1}u_1\ldots\triangle_{i_nj_n}u_n~\Omega_{2n},
\end{aligned}\end{equation} where $\Omega_{2n}$ is given by (\ref{2.41}) and $\delta^{i_1j_1\ldots
i_nj_n}_{01\ldots(2n-1)}$ is given by (\ref{2.111}). Note that $\triangle
u_1\wedge\ldots\wedge\triangle u_n$ is symmetric with respect to the
permutation of $u_1,\ldots,u_n$. In particular, when
$u_1=\ldots=u_n=u$, $\triangle u_1\wedge\ldots\wedge\triangle u_n$
coincides with $(\triangle u)^n:=\wedge^n\triangle u$.

\begin{cor}\label{p2.33}For $u_1,\ldots,
u_n\in C^2$, $\triangle
u_1\wedge\ldots\wedge\triangle u_k$ is closed, $k=1,\ldots,n,$.\end{cor}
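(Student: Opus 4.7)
The plan is to prove this by induction on $k$, using only the three structural facts already established: the identity $d_\alpha\triangle = 0$ from (\ref{1.2}), the Leibniz rule (\ref{da}) in Proposition \ref{p1.1}(3), and the fact that each $\triangle u_i$ is a $2$-form (even degree).

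For the base case $k=1$, I need $d_0\triangle u_1 = 0$ and $d_1\triangle u_1 = 0$. These are exactly the content of (\ref{1.2}): since $\triangle = d_0 d_1 = -d_1 d_0$, we get $d_0\triangle u_1 = d_0^2 d_1 u_1 = 0$ by Proposition \ref{p1.1}(2), and similarly $d_1\triangle u_1 = -d_1^2 d_0 u_1 = 0$.

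For the inductive step, suppose $T := \triangle u_2\wedge\ldots\wedge\triangle u_k$ is closed, i.e., $d_0 T = d_1 T = 0$. Since $\triangle u_1$ is a $2$-form, applying the Leibniz rule (\ref{da}) with $p=2$ gives
\begin{equation*}
d_\alpha(\triangle u_1 \wedge T) = d_\alpha(\triangle u_1)\wedge T + (-1)^2\, \triangle u_1 \wedge d_\alpha T = 0 + \triangle u_1 \wedge 0 = 0,
\end{equation*}
for $\alpha = 0,1$, where the first term vanishes by the base case and the second by the inductive hypothesis. Hence $\triangle u_1\wedge\ldots\wedge\triangle u_k$ is closed, completing the induction.

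There is essentially no obstacle here: once Proposition \ref{p1.1} and the relation (\ref{1.2}) are in hand, the corollary is a one-line consequence of the graded Leibniz rule combined with the fact that each factor $\triangle u_i$ is itself closed and of even degree (so the sign $(-1)^p$ is harmless). I expect the proof in the paper to be equally short.
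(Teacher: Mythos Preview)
Your proof is correct and uses exactly the same ingredients as the paper: the identity $d_\alpha\triangle=0$ together with the graded Leibniz rule and the even degree of each factor. The only cosmetic difference is that you package the argument as an induction on $k$, whereas the paper expands $d_\alpha(\triangle u_1\wedge\cdots\wedge\triangle u_k)$ directly as $\sum_{j=1}^k \triangle u_1\wedge\cdots\wedge d_\alpha(\triangle u_j)\wedge\cdots\wedge\triangle u_k$ and observes that each summand vanishes.
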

\begin{proof}By Proposition \ref{p1.1}(3), we have \begin{equation*}d_\alpha(\triangle u_1\wedge\ldots\wedge\triangle
u_k)=\sum_{j=1}^k\triangle u_1\wedge\ldots\wedge d_\alpha(\triangle
u_j)\wedge\ldots\wedge\triangle u_k,
\end{equation*}for $\alpha=0,1$. Note that \begin{equation*}d_0\triangle=d_0^2d_1=0\qquad \text{and }\qquad d_1\triangle=-d_1^2d_0=0,\end{equation*}by using Proposition \ref{p1.1} (1) (2). It follows that $d_\alpha(\triangle u_1\wedge\ldots\wedge\triangle
u_k)=0$.
\end{proof}
We have the following remarkable identities for $d_0$ and $d_1$, which make $d_0$ and $d_1$ behave as $\partial   $ and $ \overline{\partial}$ in the theory of several complex variables, and simplify our investigation of quaternionic Monge-Amp\`{e}re operator.
\begin{pro}\label{p2.3}For $u_1,\ldots,
u_n\in C^2$, \begin{equation*}\begin{aligned}\triangle u_1\wedge \triangle
u_2\wedge\ldots\wedge\triangle u_n&=d_0(d_1u_1\wedge \triangle
u_2\wedge\ldots\wedge\triangle u_n)=-d_1(d_0u_1\wedge \triangle
u_2\wedge\ldots\wedge\triangle u_n)\\&=d_0d_1(u_1\triangle
u_2\wedge\ldots\wedge\triangle u_n)=\triangle (u_1
\triangle u_2\wedge\ldots\wedge\triangle u_n).
\end{aligned}\end{equation*}
\end{pro}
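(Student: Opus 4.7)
The proposition is essentially a clean computation using the Leibniz rule (Proposition 2.1(3)), the anticommutativity $d_0d_1=-d_1d_0$, the nilpotency $d_0^2=d_1^2=0$, and the closedness of $\triangle u_2\wedge\ldots\wedge\triangle u_n$ established in Corollary 2.1. The plan is to prove the four equalities in turn, each by one application of Leibniz plus one annihilation.

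\textbf{First equality.} Applying the Leibniz rule to $d_0(d_1u_1\wedge \Phi)$ with $\Phi:=\triangle u_2\wedge\ldots\wedge\triangle u_n$ (noting that $d_1u_1$ has degree $1$), I get
\[
d_0(d_1u_1\wedge \Phi)=(d_0d_1u_1)\wedge \Phi - d_1u_1\wedge d_0\Phi.
\]
By Corollary 2.1, $\Phi$ is closed, so $d_0\Phi=0$; since $d_0d_1=\triangle$, the right-hand side collapses to $\triangle u_1\wedge\Phi$, which is what is claimed.

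\textbf{Second equality.} The same recipe applied to $-d_1(d_0u_1\wedge\Phi)$ gives
\[
-d_1(d_0u_1\wedge\Phi)=-(d_1d_0u_1)\wedge\Phi + d_0u_1\wedge d_1\Phi.
\]
Again $d_1\Phi=0$ by closedness of $\Phi$, and $d_1d_0=-d_0d_1=-\triangle$, so the expression equals $\triangle u_1\wedge\Phi$.

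\textbf{Third and fourth equalities.} For the third, I apply Leibniz to $d_1(u_1\Phi)$, treating $u_1$ as a function (degree $0$):
\[
d_1(u_1\Phi)=(d_1u_1)\wedge\Phi + u_1\,d_1\Phi = d_1u_1\wedge\Phi,
\]
again using $d_1\Phi=0$. Applying $d_0$ and invoking the first equality already proved yields $d_0d_1(u_1\Phi)=\triangle u_1\wedge\Phi$. The fourth equality is then merely the definition $\triangle=d_0d_1$ from (2.6).

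There is no real obstacle here; the only thing to be careful about is the sign in the Leibniz rule (the $(-1)^p$ with $p=1$ for the $d_1u_1$ and $d_0u_1$ factors, and $p=0$ when $u_1$ stands alone) and making sure to cite Corollary 2.1 at the right moment to kill $d_\alpha\Phi$. All four identities reduce to a single pattern: differentiate across a closed factor using Leibniz, drop the closed piece, and rearrange $d_0d_1=\triangle$.
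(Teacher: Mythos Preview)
Your proof is correct and follows essentially the same approach as the paper: both arguments set $\Phi=\triangle u_2\wedge\ldots\wedge\triangle u_n$, invoke the closedness of $\Phi$ from Corollary~\ref{p2.33} to kill the $d_\alpha\Phi$ terms, and apply the Leibniz rule from Proposition~\ref{p1.1}(3) together with $\triangle=d_0d_1$ to collapse each expression to $\triangle u_1\wedge\Phi$. The only cosmetic difference is that the paper chains the identities starting from $\triangle(u_1\Phi)$ while you verify each equality separately (and you make the $-d_1(d_0u_1\wedge\Phi)$ case explicit, which the paper leaves implicit).
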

\begin{proof} It follows from Corollary \ref{p2.33} that\begin{equation*}d_0(\triangle
u_2\wedge\ldots\wedge\triangle u_n)=d_1(\triangle
u_2\wedge\ldots\wedge\triangle u_n)=0.
\end{equation*}By Proposition \ref{p1.1}(3), for $\alpha=0,1$, \begin{equation*}\begin{aligned}d_\alpha(u_1\triangle
u_2\wedge\ldots\wedge\triangle u_n)&=d_\alpha u_1\wedge\triangle
u_2\wedge\ldots\wedge\triangle u_n+u_1d_\alpha(\triangle
u_2\wedge\ldots\wedge\triangle u_n)\\&=d_\alpha u_1\wedge\triangle
u_2\wedge\ldots\wedge\triangle u_n.
\end{aligned}\end{equation*}So we have
\begin{equation*}\begin{aligned}\triangle (u_1
\triangle u_2\wedge\ldots\wedge\triangle u_n)=&d_0d_1(u_1\triangle
u_2\wedge\ldots\wedge\triangle u_n)=d_0(d_1u_1\wedge\triangle
u_2\wedge\ldots\wedge\triangle u_n)\\=&d_0d_1u_1\wedge\triangle
u_2\wedge\ldots\wedge\triangle u_n-d_1u_1\wedge d_0(\triangle
u_2\wedge\ldots\wedge\triangle u_n)\\=&\triangle u_1\wedge \triangle
u_2\wedge\ldots\wedge\triangle u_n.
\end{aligned}\end{equation*}\end{proof}

\subsection{The invariance of $d_\alpha$ and $\triangle$ under quaternionic linear transformations}

Let $A\in\text{GL}_{\mathbb{H}}(n)$, $\widetilde{q}=Aq,~q,\widetilde{q}\in\mathbb{H}^n.$ For
the map $A:q\rightarrow \widetilde{q}=Aq$ and a function $\widetilde{u}(\widetilde{q})$, we define the pulling back function $u(q):=\widetilde{u}(Aq).$ By Proposition \ref{p2.1}(1) we have $\textbf{w}=\AC \textbf{z}$, where $\textbf{z}=\tau(q)=(z^{j\alpha})_{2n\times2}$ and
$\textbf{w}=\tau(\widetilde{q})=(w^{j\alpha})_{2n\times2}$, $j=0,1,\ldots,2n-1$, $\alpha=0,1$. Take a basis
$\{\omega^0,\omega^1,\ldots,\omega^{2n-1}\}$ of $\mathbb{C}^{2n}$. Let $\widetilde{\omega}^p:=A.\omega^p=\sum_{j}\AC_{pj} \omega^j$, where $A.$ is the induced transformation defined in (\ref{A.}). Since $A\in \text{GL}_{\mathbb{H}}(n)$, $\{\widetilde{\omega}^0,\widetilde{\omega}^1,\ldots,\widetilde{\omega}^{2n-1}\}$ is also a basis.

\begin{pro}\label{p2.4}For $u\in C^2$, $d_\alpha u$ is
invariant under quaternionic linear transformations on $\mathbb{H}^n$, i.e., $d_\alpha u(q)=(\widetilde{d_\alpha} \widetilde{u})(Aq)$, $\alpha=0,1$, where $\widetilde{d_\alpha} \widetilde{u}=\sum_j\widetilde{\nabla}_{j\alpha}\widetilde{u}~\widetilde{\omega}^j$, and $\widetilde{\nabla}_{j\alpha}$ is the operator defined by (\ref{2.4}) in terms of the real coordinates $\widetilde{x}$ of $\widetilde{q}$ $(\widetilde{q}_j=\widetilde{x}_{4j}+\textbf{i}\widetilde{x}_{4j+1}+\textbf{j}\widetilde{x}_{4j+2}+\textbf{k}\widetilde{x}_{4j+3})$.
\end{pro}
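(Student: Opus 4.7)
The identity is essentially a chain rule statement, so the plan is to reduce the claim to computing $\nabla_{j\alpha}$ of the composed coordinate functions and then repackaging the sum with the pullback basis. The crucial structural input is that the embedding $\tau$ is $\mathbb{H}$-linearly intertwined by $\tau(A)$ (Proposition \ref{p2.1}(1)), together with the fact (Lemma \ref{l3.1}, already invoked in the introduction) that $\nabla_{j\alpha}$ behaves like $2\partial/\partial z^{j\alpha}$: it satisfies $\nabla_{j\alpha} z^{k\beta} = 2\delta_j^k \delta_\alpha^\beta$ and annihilates each $\overline{z^{k\beta}}$.

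First I would write $\mathbf{w}=\AC\mathbf{z}$ column-wise as $w^{k\beta}=\sum_{l}\AC_{kl}\,z^{l\beta}$ for $\beta=0,1$. Since the entries $\AC_{kl}$ are constants, applying $\nabla_{j\alpha}$ and using $\nabla_{j\alpha} z^{l\beta} = 2\delta_j^l\delta_\alpha^\beta$ gives
\begin{equation*}
\nabla_{j\alpha} w^{k\beta} \;=\; 2\,\AC_{kj}\,\delta_\alpha^\beta,
\qquad
\nabla_{j\alpha}\,\overline{w^{k\beta}} \;=\; 0,
\end{equation*}
the second equation following because $\overline{w^{k\beta}}$ is a $\mathbb{C}$-linear combination of the $\overline{z^{l\beta}}$, which $\nabla_{j\alpha}$ kills.

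Next I would express $\widetilde{u}$ as a smooth function of the collection $\{w^{k\beta},\overline{w^{k\beta}}\}$ and apply the chain rule to $u(q)=\widetilde{u}(Aq)$. The key observation is that $\widetilde{\nabla}_{k\alpha}$ acts on functions of $w, \bar w$ exactly as $2\,\partial/\partial w^{k\alpha}$ (by the very same Lemma \ref{l3.1} applied in the $\widetilde{x}$-coordinates). Hence
\begin{equation*}
\nabla_{j\alpha} u(q)
\;=\; \sum_{k,\beta}\frac{\partial\widetilde{u}}{\partial w^{k\beta}}\,\nabla_{j\alpha} w^{k\beta}
\;=\; \sum_{k}\AC_{kj}\,\bigl(\widetilde{\nabla}_{k\alpha}\widetilde{u}\bigr)(Aq),
\end{equation*}
where the antiholomorphic terms drop out by $\nabla_{j\alpha}\overline{w^{k\beta}} = 0$ and the factor $2$ from $\widetilde{\nabla}_{k\alpha}$ cancels the $2$ in $\nabla_{j\alpha} w^{k\beta}$.

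Finally I would sum against the basis elements and use the very definition $\widetilde{\omega}^k = A.\omega^k = \sum_{j}\AC_{kj}\,\omega^j$ to repackage:
\begin{equation*}
d_\alpha u(q) \;=\; \sum_{j} \nabla_{j\alpha} u(q)\,\omega^j
\;=\; \sum_{k}\bigl(\widetilde{\nabla}_{k\alpha}\widetilde{u}\bigr)(Aq)\,\sum_{j}\AC_{kj}\,\omega^j
\;=\; \sum_{k}\bigl(\widetilde{\nabla}_{k\alpha}\widetilde{u}\bigr)(Aq)\,\widetilde{\omega}^k
\;=\; (\widetilde{d_\alpha}\widetilde{u})(Aq),
\end{equation*}
which is the desired identity. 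There is no real obstacle in this argument, only bookkeeping: one must track the factor $2$ in the normalization $\nabla_{j\alpha} z^{k\beta}=2\delta_j^k\delta_\alpha^\beta$ consistently on both sides, and one must verify $\nabla_{j\alpha}\overline{w^{k\beta}}=0$ so that the chain rule collapses to the purely ``holomorphic'' sum above; both are immediate from Lemma \ref{l3.1} and the constancy of the entries of $\AC=\tau(A)$.
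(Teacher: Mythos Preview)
Your repackaging step (the final display) and the target transformation formula
\[
\nabla_{j\alpha} u(q)=\sum_{k}\tau(A)_{kj}\,(\widetilde{\nabla}_{k\alpha}\widetilde{u})(Aq)
\]
are exactly what the paper proves and uses (this is the paper's equation~(\ref{2.9}) followed by~(\ref{2.141})). The gap is in how you justify that formula.

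The claim that $\nabla_{j\alpha}$ annihilates every $\overline{z^{k\beta}}$ is false. From~(\ref{2.3}) one has $\overline{z^{(2l+1)1}}=x_{4l}-\textbf{i}x_{4l+1}=z^{(2l)0}$, so by Lemma~\ref{l3.1}
\[
\nabla_{(2l)0}\,\overline{z^{(2l+1)1}}=\nabla_{(2l)0}\,z^{(2l)0}=2\neq 0.
\]
More generally the $4n$ quantities $\{z^{k\beta}\}$ are \emph{not} independent complex coordinates on $\mathbb{R}^{4n}$: the second column of~(\ref{2.3}) is determined by the conjugate of the first. Hence ``$\partial/\partial w^{k\beta}$'' has no intrinsic meaning on $\mathbb{R}^{4n}$, and your chain rule step, which treats $\widetilde{u}$ as a smooth function of the (dependent) collection $\{w^{k\beta},\overline{w^{k\beta}}\}$ and drops the antiholomorphic terms, is not rigorous.

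The paper fixes exactly this issue by passing to $\mathbb{C}^{4n}$, where the $z^{j\alpha}$ \emph{are} honest independent holomorphic coordinates: it extends a polynomial $u$ to a holomorphic polynomial $U$ on $\mathbb{C}^{4n}$ via~(\ref{2.93}), observes that $U(\textbf{z})=\widetilde{U}(\tau(A)\textbf{z})$ for all $\textbf{z}\in\mathbb{C}^{4n}$, applies the ordinary holomorphic chain rule~(\ref{2.91}), and then verifies by direct computation~(\ref{2.92}) that $\partial_{z^{j\alpha}}U|_{\tau(\mathbb{H}^n)}=\frac{1}{2}\nabla_{j\alpha}u$. This is precisely the step that legitimizes your heuristic ``$\nabla_{j\alpha}=2\,\partial/\partial z^{j\alpha}$''. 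Once you insert that extension argument in place of your Wirtinger-style chain rule, your proof becomes correct and coincides with the paper's.
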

\begin{proof}We claim the transformation formula of operator $\nabla_{j\alpha}$ under coordinate transformation as
\begin{equation}\label{2.9}\nabla_{j\alpha}u(q)=\sum_{t=0}^{2n-1}\AC_{tj}(\widetilde{\nabla}_{t\alpha}\widetilde{u})(Aq),
\end{equation}where $i,j=0,\ldots,2n-1$, $\alpha,\beta=0,1 $. This identity follows from the chain rule when extended to $\mathbb{C}^{4n}\supseteq \mathbb{R}^{4n}$. First let $u(x)$ be a polynomial on $\mathbb{R}^{4n}$, by (\ref{2.3}) we can extend $u$ to a holomorphic polynomial on $\mathbb{C}^{4n}$: \begin{equation}\label{2.93}\begin{aligned}&U(\ldots,z^{(2l)0},z^{(2l)1},z^{(2l+1)0},z^{(2l+1)1},\ldots)\\
=&u\left(\ldots,\frac{z^{(2l)0}+z^{(2l+1)1}}{2},\frac{z^{(2l+1)1}-z^{(2l)0}}{2i},\frac{z^{(2l+1)0}-z^{(2l)1}}{2},\frac{z^{(2l+1)0}+z^{(2l)1}}{2i},\ldots\right).
\end{aligned}\end{equation} 
So do for $\widetilde{u}(\widetilde{x})$. Since $u(q)=\widetilde{u}(Aq)$ and $\textbf{w}=\AC \textbf{z}$, we have $U(z^{00},z^{01},z^{10},z^{11},\ldots)|_{\tau(\mathbb{H}^n)}=
\widetilde{U}(w^{00},w^{01},w^{10},w^{11},\ldots)|_{\tau(\mathbb{H}^n)}$, from which we find that $U(\mathbf{z} ) =
\widetilde{U}( \AC \textbf{z} ) $ for any $\mathbf{z}\in \mathbb{C}^{4n}$ by $\mathbb{C}\tau(\mathbb{H}^n)=\mathbb{C}^{4n}$. Since $U$ and $\widetilde{U}$ are holomorphic functions on $\mathbb{C}^{4n}$, by the chain rule of the holomorphic variables, we have \begin{equation}\label{2.91}\partial_{z^{j\alpha}}U(z^{00},z^{01},z^{10},z^{11},\ldots)=\sum_{t=0}^{2n-1}\AC_{tj}(\partial_{w^{t\alpha}}\widetilde{U})(w^{00},w^{01},w^{10},w^{11},\ldots),
\end{equation}where $\partial_{z^{j\alpha}}$ and $\partial_{w^{t\alpha}}$ are holomorphic derivatives with respect to holomorphic variables $z^{j\alpha}$ and $w^{t\alpha}$, and $(w^{t\alpha})=\tau(A)(z^{j\alpha})$ for fixed $\alpha$. If we can check that when restricted to $\mathbb{H}^{n}\subseteq\mathbb{C}^{4n}$, \begin{equation}\label{2.92}\begin{aligned}\partial_{z^{j\alpha}}U(z^{00},z^{01},z^{10},z^{11},\ldots)|_{\textbf{z}=\tau(q)}&=\frac{1}{2}\nabla_{j\alpha}u(x),\\
(\partial_{w^{t\alpha}}\widetilde{U})(w^{00},w^{01},w^{10},w^{11},\ldots)|_{\textbf{w}=\tau(\widetilde{q})}&=\frac{1}{2}(\widetilde{\nabla}_{t\alpha}\widetilde{u})(\widetilde{x}),\end{aligned}\end{equation}
then (\ref{2.9}) follows from (\ref{2.91}) by restricting to $\mathbb{H}^{n}$. By definition (\ref{2.93}) and (\ref{2.3}), \begin{equation*}\begin{aligned}&\partial_{z^{(2l)0}}U(z^{00},z^{01},z^{10},z^{11},\ldots)\\
&=\partial_{z^{(2l)0}}\left[u\left(\ldots,\frac{z^{(2l)0}+z^{(2l+1)1}}{2},\frac{z^{(2l+1)1}-z^{(2l)0}}{2i},\frac{z^{(2l+1)0}-z^{(2l)1}}{2},\frac{z^{(2l+1)0}+z^{(2l)1}}{2i},\ldots\right)\right]\\
&=\frac{\partial u(x)}{\partial x_{4l}}\cdot\frac{1}{2}+\frac{\partial u(x)}{\partial x_{4l+1}}\cdot\frac{i}{2}=\frac{1}{2}\nabla_{(2l)0}u(x),\\
&\partial_{z^{(2l)1}}U(z^{00},z^{01},z^{10},z^{11},\ldots)=\frac{\partial u(x)}{\partial x_{4l+2}}\cdot\left(-\frac{1}{2}\right)+\frac{\partial u(x)}{\partial x_{4l+3}}\cdot\left(-\frac{i}{2}\right)=\frac{1}{2}\nabla_{(2l)1}u(x),\\
&\partial_{z^{(2l+1)0}}U(z^{00},z^{01},z^{10},z^{11},\ldots)=\frac{\partial u(x)}{\partial x_{4l+2}}\cdot\frac{1}{2}+\frac{\partial u(x)}{\partial x_{4l+3}}\cdot\left(-\frac{i}{2}\right)=\frac{1}{2}\nabla_{(2l+1)0}u(x),\\
&\partial_{z^{(2l+1)1}}U(z^{00},z^{01},z^{10},z^{11},\ldots)=\frac{\partial u(x)}{\partial x_{4l}}\cdot\frac{1}{2}+\frac{\partial u(x)}{\partial x_{4l+1}}\cdot\left(-\frac{i}{2}\right)=\frac{1}{2}\nabla_{(2l+1)1}u(x).\\
\end{aligned}\end{equation*}By the same reason, the second identity in (\ref{2.92}) holds. Combine (\ref{2.91}) and (\ref{2.92}) to get (\ref{2.9}) when $\widetilde{u},u$ are polynomials. Then (\ref{2.9}) holds for all functions by Taylor's formula. See pp.202 in \cite{kang} and Section 2 of \cite{Wang} for this construction of holomorphic polynomials and the relationship between $\nabla_{j\alpha}$ and $\partial_{z^{j\alpha}}$.

By (\ref{2.9}) we have \begin{equation}\label{2.141}d_\alpha u(q)=\sum_j\nabla_{j\alpha}u(q)~\omega^j=\sum_{j,t}\AC_{tj}(\widetilde{\nabla}_{t\alpha}\widetilde{u})(Aq)~\omega^j
=\sum_t(\widetilde{\nabla}_{t\alpha}\widetilde{u})(Aq)~\widetilde{\omega}^t=(\widetilde{d_\alpha} \widetilde{u})(Aq).
\end{equation}
\end{proof}
\begin{cor}\label{c2.2}$\triangle u$ and $(\triangle u)^n$ are also invariant under quaternionic linear transformations on $\mathbb{H}^n$, i.e., $\triangle u(q)=\widetilde{\triangle} \widetilde{u}(Aq)$.\end{cor}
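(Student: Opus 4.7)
The plan is to derive the invariance of $\triangle u$ directly from the invariance of $d_\alpha$ established in Proposition \ref{p2.4}, by applying $d_0$ to the 1-form $d_1u$ and using the chain rule \eqref{2.9} a second time. Once the 2-form identity $\triangle u(q)=\widetilde{\triangle}\widetilde{u}(Aq)$ is known, the $n$-fold wedge identity follows by the multilinearity of $\wedge$.

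First, by Proposition \ref{p2.4} (see \eqref{2.141}),
\begin{equation*}
d_1 u(q)=(\widetilde{d_1}\widetilde{u})(Aq)=\sum_{t=0}^{2n-1}(\widetilde{\nabla}_{t1}\widetilde{u})(Aq)\,\widetilde{\omega}^t,
\end{equation*}
where $\widetilde{\omega}^t=A.\omega^t=\sum_j\AC_{tj}\omega^j$ is a constant element of $\mathbb{C}^{2n}$ (independent of $q$). The key observation is that since the $\widetilde{\omega}^t$ do not depend on $q$, Proposition \ref{p1.1}(3) (the Leibniz rule) gives $d_0(f\widetilde{\omega}^t)=(d_0f)\wedge\widetilde{\omega}^t$ for any smooth scalar $f$. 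Hence
\begin{equation*}
\triangle u(q)=d_0 d_1 u(q)=\sum_{t=0}^{2n-1} d_0\bigl[(\widetilde{\nabla}_{t1}\widetilde{u})(Aq)\bigr]\wedge\widetilde{\omega}^t.
\end{equation*}

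Second, apply Proposition \ref{p2.4} again, this time to the composite function $q\mapsto(\widetilde{\nabla}_{t1}\widetilde{u})(Aq)$. This yields
\begin{equation*}
d_0\bigl[(\widetilde{\nabla}_{t1}\widetilde{u})(Aq)\bigr]=\widetilde{d_0}(\widetilde{\nabla}_{t1}\widetilde{u})(Aq)=\sum_{s=0}^{2n-1}(\widetilde{\nabla}_{s0}\widetilde{\nabla}_{t1}\widetilde{u})(Aq)\,\widetilde{\omega}^s.
\end{equation*}
Substituting into the previous display gives
\begin{equation*}
\triangle u(q)=\sum_{s,t}(\widetilde{\nabla}_{s0}\widetilde{\nabla}_{t1}\widetilde{u})(Aq)\,\widetilde{\omega}^s\wedge\widetilde{\omega}^t=\widetilde{d_0}\widetilde{d_1}\widetilde{u}(Aq)=\widetilde{\triangle}\widetilde{u}(Aq),
\end{equation*}
which is the first claim.

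Finally, for $(\triangle u)^n$ we simply take the $n$-th exterior power of the identity $\triangle u(q)=\widetilde{\triangle}\widetilde{u}(Aq)$ in $\wedge^{2n}\mathbb{C}^{2n}$. Because $\wedge$ is $\mathbb{C}$-multilinear and both sides represent the same element of $\wedge^2\mathbb{C}^{2n}$ (expressed in the two bases $\{\omega^i\wedge\omega^j\}$ and $\{\widetilde{\omega}^i\wedge\widetilde{\omega}^j\}$, which are related by the induced action $A.$), wedging $n$ copies yields $(\triangle u)^n(q)=(\widetilde{\triangle}\widetilde{u})^n(Aq)$.

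There is no real obstacle here; the only point requiring attention is making sure that after applying $d_1$ to $u$ we may legitimately apply $d_0$ term by term in the $\widetilde{\omega}$-basis, which is exactly what Proposition \ref{p1.1}(3) permits since $\widetilde{\omega}^t$ is a constant form.
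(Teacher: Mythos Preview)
Your proof is correct and follows essentially the same route as the paper's: both start from $d_1u(q)=(\widetilde{d_1}\widetilde{u})(Aq)$ via Proposition~\ref{p2.4}, then apply $d_0$ and use the chain rule \eqref{2.9} a second time to convert the remaining $\nabla_{i0}$ into $\widetilde{\nabla}_{s0}$ and $\omega^i$ into $\widetilde{\omega}^s$. The only cosmetic difference is that you package the second step as another invocation of Proposition~\ref{p2.4} (together with the Leibniz rule to handle the constant $\widetilde{\omega}^t$), whereas the paper writes out \eqref{2.9} directly; the computations are identical.
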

\proof It follows from (\ref{2.9}) and (\ref{2.141}) that \begin{equation*}\begin{aligned}\triangle u(q)=&d_0d_1u(q)=d_0 \left[(\widetilde{d_1}\widetilde{u})(Aq)\right]=\sum_{it}\nabla_{i0}\left[(\widetilde{\nabla}_{t1}\widetilde{u})(Aq)\right]~\omega^i\wedge\widetilde{\omega}^t\\
=&\sum_{ith}\AC_{hi}\left(\widetilde{\nabla}_{i0}\widetilde{\nabla}_{t1}\widetilde{u}\right)(Aq)~\omega^i\wedge\widetilde{\omega}^t=\sum_{it}\left(\widetilde{\nabla}_{i0}\widetilde{\nabla}_{t1}\widetilde{u}\right)(Aq)~\widetilde{\omega}^i\wedge\widetilde{\omega}^t=(\widetilde{\triangle} \widetilde{u})(Aq).\end{aligned}\end{equation*}\endproof

\section{positive forms and closed positive currents on $\mathbb{H}^n$}\par

\subsection{Positive elements of $\wedge^{2k}\mathbb{C}^{2n}$}
We introduce positive elements of $\wedge^{2k}\mathbb{C}^{2n}$ following Alesker \cite{alesker2}. Since we work on the flat space $\mathbb{H}^n$, they are more concrete. Fix a basis
$\{\omega^0,\omega^1,\ldots,\omega^{2n-1}\}$ of $\mathbb{C}^{2n}$. Under the embedding $\tau$, the right multiplying $\textbf{j}$: $(q_1,\ldots,q_n)\mapsto(q_1\textbf{j},\ldots,q_n\textbf{j})$ maps the left column of (\ref{2.3}) to the right column. It induces a real linear map (up to a sign)
\begin{equation*}\label{rouj}\rho(\textbf{j}):\mathbb{C}^{2n}\rightarrow\mathbb{C}^{2n},\qquad\rho(\textbf{j})(z\omega^k)=\overline{z}J\omega^k,
\end{equation*}which is not $\mathbb{C}$-linear. Also the right multiplying of $\textbf{i}$: $(q_1,\ldots,q_n)\mapsto(q_1\textbf{i},\ldots,q_n\textbf{i})$ induces
\begin{equation*}\label{roui}\rho(\textbf{i}):\mathbb{C}^{2n}\rightarrow\mathbb{C}^{2n},
\qquad\rho(\textbf{i})(z\omega^k)=z\textbf{i}\omega^k.
\end{equation*}Thus $\rho$ defines $GL_{\mathbb{H}}(1)$-action on $\mathbb{C}^{2n}$ ($\rho(\textbf{i})^2=\rho(\textbf{j})^2=-id$, $\rho(\textbf{i}) \rho(\textbf{j})=-\rho(\textbf{j})\rho(\textbf{i})$). $\mathbb{C}^{2n}$ is also a $GL_{\mathbb{H}}(n)$-module by $\tau$, since $\tau(A)$ is   $\mathbb{C}$-linear   on $\mathbb{C}^{2n}$ and $\tau(AB)=\tau(A)\tau(B)$ for   $A,B\in GL_{\mathbb{H}}(n)$. Also \begin{equation}\label{eq:rouj-A}\begin{aligned}\rho(\textbf{j})\circ\tau(A)(z\omega^k)&=\rho(\textbf{j})\left(z\sum_l\tau(A)_{kl}\omega^l\right)=\overline{z}\sum_l\overline{\tau(A)_{kl}}J\omega^l\\
&=\overline{z}J(\overline{\tau(A)}\omega^k)=\overline{z}\tau(A)J \omega^k =\tau(A)\circ\rho(\textbf{j})(z\omega^k)
\end{aligned}\end{equation}
by Proposition \ref{p2.1} (2). It is obvious that $\rho(\textbf{i})\circ\tau(A)=\tau(A)\circ\rho(\textbf{i})$. Thus the actions of $GL_{\mathbb{H}}(1)$ and $GL_{\mathbb{H}}(n)$ on $\mathbb{C}^{2n}$ are commutative, and gives $\mathbb{C}^{2n}$ a structure of $GL_{\mathbb{H}}(n)GL_{\mathbb{H}}(1)$-module. This action extends to $\wedge^{2k}\mathbb{C}^{2n}$ naturally.

An element $\varphi$ of $\wedge^{2k}\mathbb{C}^{2n}$ is called \emph{real} if $\rho(\textbf{j})\varphi=\varphi$. Denote by $\wedge^{2k}_{\mathbb{R}}\mathbb{C}^{2n}$ the subspace of all real elements in $\wedge^{2k}\mathbb{C}^{2n}$. These forms are counterparts of $(k,k)-$forms in complex analysis. In the space
$\wedge^{2k}_{\mathbb{R}}\mathbb{C}^{2n}$ let us define convex cones
$\wedge^{2k}_{\mathbb{R}+}\mathbb{C}^{2n}$ and $\text{SP}^{2k}\mathbb{C}^{2n}$ of
positive and strongly positive elements respectively. The cones are
the same as in \cite{alesker2} when the manifold is flat. The definition in
the case $k=0$ is obvious:
$\wedge^{0}_{\mathbb{R}}\mathbb{C}^{2n}=\mathbb{R}$ and the positive elements
are the usual ones. Consider the case $k=n$. dim$
_{\mathbb{C}}\wedge^{2n}\mathbb{C}^{2n}=1$. One can see that $\Omega_{2n}$ defined by (\ref{2.41}) is an element
of $\wedge_{\mathbb{R}}^{2n}\mathbb{C}^{2n}$ ($\rho(\textbf{j})\beta_n=\beta_n$) and spans it. An
element $\eta\in\wedge_{\mathbb{R}}^{2n}\mathbb{C}^{2n}$ is called\emph{
positive} if $\eta=\kappa~\Omega_{2n}$ for some non-negative number $\kappa$.\par

A right $\mathbb{H}$-linear
map $g:\mathbb{H}^{k}\rightarrow \mathbb{H}^{m}$ induces a $\mathbb{C}$-linear map $\tau(g):\mathbb{C}^{2k}\rightarrow\mathbb{C}^{2m}$. We write $g=(g_{jl})_{m\times k},g_{jl}\in \mathbb{H}$, then $\tau(g)$ is the complex $(2m\times2k)$-matrix$(\tau(g_{jl}))_{j=0,\ldots,m-1}^{l=0,\ldots,k-1}$ given by
(\ref{tau}). Similar to (\ref{A.}), the induced $\mathbb{C}$-linear pulling back transformation of $g^*:\mathbb{C}^{2m}\rightarrow\mathbb{C}^{2k}$ is defined as:\begin{equation}\label{g^*}g^*\widetilde{\omega}^p=\sum_{j=0}^{2k-1}\tau(g)_{pj}\omega^j,\qquad p=0,\ldots,2m-1,\end{equation}where $\{\widetilde{\omega}^0,\ldots,\widetilde{\omega}^{2m-1}\}$ is a basis of $\mathbb{C}^{2m}$ and $\{\omega^0,\ldots,\omega^{2k-1}\}$ is a basis of $\mathbb{C}^{2k}$. The induced $\mathbb{C}$-linear pulling back transformation of $g$ on $\wedge^{2k}\mathbb{C}^{2m}$ is given by: $g^*(\alpha\wedge\beta)=g^*\alpha\wedge g^*\beta $.

An element $\omega\in\wedge_{\mathbb{R}}^{2k}\mathbb{C}^{2n}$ is said to be \emph{elementary strongly positive} if there exist linearly independent right $\mathbb{H}$-linear mappings $\eta_j:\mathbb{H}^n\rightarrow \mathbb{H}$ , $j=1,\ldots,k$, such that \begin{equation}\label{eq:omega}\omega=\eta_1^*\widetilde{\omega}^0\wedge \eta_1^*\widetilde{\omega}^1\wedge\ldots\wedge\eta_k^*\widetilde{\omega}^0\wedge \eta_k^*\widetilde{\omega}^1,\end{equation}where $\{\widetilde{\omega}^0,\widetilde{\omega}^1\}$ is a basis of $\mathbb{C}^{2}$ and $\eta_j^*:~\mathbb{C}^{2}\rightarrow\mathbb{C}^{2n}$ is the induced $\mathbb{C}$-linear pulling back transformation of $\eta_j$. $\omega$ in (\ref{eq:omega}) is real, i.e., $\rho(\textbf{j})\omega=\omega$ (cf. (\ref{eq:rouj-A})).\par

An element $\omega\in\wedge_{\mathbb{R}}^{2k}\mathbb{C}^{2n}$ is called \emph{strongly positive} if it belongs to the convex cone $\text{SP}^{2k}\mathbb{C}^{2n}$ in $\wedge_{\mathbb{R}}^{2k}\mathbb{C}^{2n}$ generated by elementary strongly positive elements; that is, $\omega=\sum_{l=1}^m\lambda_l\xi_l$ for some non-negative numbers $\lambda_1,\ldots,\lambda_m$ and some elementary strongly positive elements $\xi_1,\ldots,\xi_m$. An $2k$-element $\omega$ is said to be \emph{positive} if for any elementary strongly positive element $\eta\in \text{SP}^{2n-2k}\mathbb{C}^{2n}$, $\omega\wedge\eta$ is positive. We will denote the set of all positive $2k$-elements by $\wedge^{2k}_{\mathbb{R}+}\mathbb{C}^{2n}$. The following proposition tells us that any $2k$ element is a $\mathbb{C}$-linear combination of strongly positive $2k$ elements. This fact will be important later.

\begin{pro}\label{p3.12}$($cf. Proposition 5.2 in \cite{alesker2}$)$ \\
$(1)$ $span_\mathbb{C}\{\varphi;~\varphi\in \wedge^{2k}_{\mathbb{R}+}\mathbb{C}^{2n}\}=span_\mathbb{C}\{\varphi;~\varphi\in \text{SP}^{2k}\mathbb{C}^{2n}\}=\wedge^{2k}\mathbb{C}^{2n}$.\\
$(2)~\text{SP}^{2k}\mathbb{C}^{2n}\subseteq\wedge^{2k}_{\mathbb{R}+}\mathbb{C}^{2n}$.\\
$(3)~\text{SP}^{2k}\mathbb{C}^{2n}\wedge\text{SP}^{2l}\mathbb{C}^{2n}\subseteq\text{SP}^{2k+2l}\mathbb{C}^{2n}$.
\end{pro}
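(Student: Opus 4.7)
The plan is to handle the three items in the order (3), (2), (1): part (3) is purely algebraic and feeds directly into the reduction for (2), while (1) is a separate spanning statement that I would treat last by an explicit polarization argument.

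For (3), by $\mathbb{C}$-bilinearity of $\wedge$ and convex linearity of $\mathrm{SP}$, it suffices to verify that the wedge of two elementary strongly positive elements is strongly positive. Given $\omega = \eta_1^* \widetilde{\omega}^0 \wedge \eta_1^* \widetilde{\omega}^1 \wedge \cdots \wedge \eta_k^* \widetilde{\omega}^0 \wedge \eta_k^* \widetilde{\omega}^1$ and $\varphi = \mu_1^* \widetilde{\omega}^0 \wedge \mu_1^* \widetilde{\omega}^1 \wedge \cdots \wedge \mu_l^* \widetilde{\omega}^0 \wedge \mu_l^* \widetilde{\omega}^1$, the key observation is that $\omega \wedge \varphi$ is the pullback of the top exterior form on $\mathbb{C}^{2(k+l)}$ via the combined right $\mathbb{H}$-linear map $(\eta_1, \ldots, \eta_k, \mu_1, \ldots, \mu_l):\mathbb{H}^n\to\mathbb{H}^{k+l}$. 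If this combined family is $\mathbb{H}$-linearly independent, then $\omega \wedge \varphi$ is elementary strongly positive by the definition (\ref{eq:omega}). Otherwise, the $\mathbb{H}$-rank is at most $k+l-1$, hence the $\mathbb{C}$-rank of the induced map $\tau((\eta, \mu))$ is at most $2(k+l)-2$, forcing the pullback to vanish.

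For (2), given $\omega \in \mathrm{SP}^{2k}\mathbb{C}^{2n}$ and any elementary strongly positive $\eta \in \mathrm{SP}^{2n-2k}\mathbb{C}^{2n}$, part (3) already gives $\omega \wedge \eta \in \mathrm{SP}^{2n}\mathbb{C}^{2n}$; so positivity of $\omega$ reduces to the top-degree inclusion $\mathrm{SP}^{2n}\mathbb{C}^{2n}\subseteq \wedge^{2n}_{\mathbb{R}+}\mathbb{C}^{2n}$. A generator of $\mathrm{SP}^{2n}\mathbb{C}^{2n}$ has the form $\eta_1^* \widetilde{\omega}^0 \wedge \cdots \wedge \eta_n^* \widetilde{\omega}^1$ where $(\eta_1, \ldots, \eta_n)$ assembles into some $A \in \mathrm{GL}_{\mathbb{H}}(n)$; the pullback identity then gives $\det_{\mathbb{C}}(\tau(A))\,\Omega_{2n}$, so the issue is to show $\det_{\mathbb{C}}(\tau(A)) > 0$. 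I would verify this by polar decomposition $A = UH$ with $U \in \mathrm{U}_{\mathbb{H}}(n)$ and $H$ positive quaternionic Hermitian: formula (\ref{2.38}) forces $\det_{\mathbb{C}}(\tau(U)) = 1$, and diagonalizing $H$ by some element of $\mathrm{U}_{\mathbb{H}}(n)$ reduces to $H = \mathrm{diag}(d_1,\ldots,d_n)$ with $d_j > 0$, in which case $\tau(H) = \mathrm{diag}(d_1 I_2, \ldots, d_n I_2)$ and $\det_{\mathbb{C}}(\tau(H)) = \prod_j d_j^2 > 0$.

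For (1), by (3) it suffices to handle $k=1$ and then observe that any basis monomial $\omega^{i_1} \wedge \cdots \wedge \omega^{i_{2k}}$ sits in the $\mathbb{C}$-span of iterated wedges of strongly positive 2-forms, which by (3) lies in the $\mathbb{C}$-span of $\mathrm{SP}^{2k}\mathbb{C}^{2n}$. For the case $k=1$, the strategy is to use a rich supply of elementary strongly positive 2-forms $\eta^* \widetilde{\omega}^0 \wedge \eta^* \widetilde{\omega}^1$ produced by $\mathbb{H}$-linear functionals $\eta(q) = a_j q_j + a_k q_k$ with $a_j, a_k \in \{1, \mathbf{i}, \mathbf{j}, \mathbf{k}\}$, expand each via (\ref{g^*}) and the block expression (\ref{2.301}), and then extract each basis wedge $\omega^a \wedge \omega^b$ by polarization. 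The main obstacle sits precisely here: one must verify that the quadratic expressions in the entries of $\tau(\eta)$ coming from the conjugate embedding are rich enough to span all of $\wedge^2\mathbb{C}^{2n}$ over $\mathbb{C}$. This is a finite but careful linear-algebra check exploiting the precise structure of the $2\times 2$ blocks in (\ref{2.301}); everything else in the proof reduces to formal manipulations with pullbacks and the polar decomposition.
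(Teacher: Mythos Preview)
Your proposal is correct, and for parts (2) and (3) you are simply spelling out what the paper dismisses in one line as ``follows from definitions directly''; your polar-decomposition argument for $\det_{\mathbb{C}}(\tau(A))>0$ is a clean way to make (2) honest.

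The interesting divergence is in part (1). The paper's proof is representation-theoretic and handles all $k$ at once: it observes that the $\mathbb{C}$-span $V$ of $\mathrm{SP}^{2k}\mathbb{C}^{2n}$ is a nonzero $GL_{\mathbb{H}}(n)$-submodule of $\wedge^{2k}\mathbb{C}^{2n}$, and then invokes the identity $\mathbb{C}\,\tau(\mathfrak{gl}(n,\mathbb{H}))=\mathfrak{gl}(2n,\mathbb{C})$ together with the standard irreducibility of $\wedge^{2k}\mathbb{C}^{2n}$ as a $\mathfrak{gl}(2n,\mathbb{C})$-module to conclude $V=\wedge^{2k}\mathbb{C}^{2n}$. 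Your route is more elementary: reduce to $k=1$ via (3), then span $\wedge^{2}\mathbb{C}^{2n}$ by polarizing the forms $\eta^*\widetilde{\omega}^0\wedge\eta^*\widetilde{\omega}^1$ over $\eta(q)=a_jq_j+a_kq_k$ with $a\in\{1,\mathbf{i},\mathbf{j},\mathbf{k}\}$. That computation does go through (for each pair of quaternionic indices the four choices of $a_k$ decouple the four ``cross'' basis $2$-forms), so the obstacle you flag is only bookkeeping. The trade-off is clear: the paper's argument is slicker and conceptual but imports the irreducibility of exterior powers and the complexification identity; yours avoids any representation theory at the price of a finite explicit check.
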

\begin{proof}(2) and (3) follow from   definitions directly. Let $V:=span_\mathbb{C}\{\varphi;~\varphi\in \text{SP}^{2k}\mathbb{C}^{2n}\}$. We claim $V$ is an irreducible $GL_{\mathbb{H}}(n) $-module. For any elementary strongly positive $2k$-element $\varphi=\xi_1^*\widetilde{\omega}^0\wedge \xi_1^*\widetilde{\omega}^1\wedge\ldots\wedge\xi_k^*\widetilde{\omega}^0\wedge \xi_k^*\widetilde{\omega}^1$ for some linearly independent right $\mathbb{H}$-linear mappings $\xi_j:\mathbb{H}^n\rightarrow \mathbb{H}$, and $g\in GL_{\mathbb{H}}(n)$, we have \begin{equation}\label{3.21}\begin{aligned}\tau(g)\varphi=&g^*\left(\xi_1^*\widetilde{\omega}^0\wedge \xi_1^*\widetilde{\omega}^1\wedge\ldots\wedge\xi_k^*\widetilde{\omega}^0\wedge \xi_k^*\widetilde{\omega}^1\right)\\=&(\xi_1\circ g)^*\widetilde{\omega}^0\wedge (\xi_1\circ g)^*\widetilde{\omega}^1\wedge\ldots\wedge (\xi_k\circ g)^*\widetilde{\omega}^0\wedge (\xi_k\circ g)^*\widetilde{\omega}^1\in V.
\end{aligned}\end{equation}$\xi_j\circ g:\mathbb{H}^n\rightarrow \mathbb{H}$ is also right $\mathbb{H}$-linear.   Therefore $V$ is $GL_{\mathbb{H}}(n) $-module.

Note that we can write $\varphi=g^*\left(e_1^*\widetilde{\omega}^0\wedge e_1^*\widetilde{\omega}^1\wedge\ldots\wedge e_k^*\widetilde{\omega}^0\wedge e_k^*\widetilde{\omega}^1\right)$ for suitable $g\in GL_{\mathbb{H}}(n)$ since $\xi_1,\ldots,\xi_k$ is linearly independent, where $e_j:\mathbb{H}^n\rightarrow \mathbb{H}$ is the $j$-th projection. Namely $V$ is generated by $e_1^*\widetilde{\omega}^0\wedge e_1^*\widetilde{\omega}^1\wedge\ldots\wedge e_k^*\widetilde{\omega}^0\wedge e_k^*\widetilde{\omega}^1$ under the action of $GL_{\mathbb{H}}(n)$. So it is irreducible.

  $V $  is a complex irreducible $\mathfrak{gl}(n,\mathbb{H})$-module. Obviously, $\mathfrak{gl}(2n,\mathbb{C})=\mathbb{C}\tau(\mathfrak{gl}(n,\mathbb{H}))$ by the embedding $\tau$, and so a complex irreducible $\mathfrak{gl}(n,\mathbb{H})$-module is also a complex irreducible $\mathfrak{gl}(2n,\mathbb{C})$-module.
      Note that $V \subseteq\wedge^{2k}\mathbb{C}^{2n}$ and the latter one is already an irreducible $\mathfrak{gl}(2n,\mathbb{C})$-module. So we must have $V =\wedge^{2k}\mathbb{C}^{2n}$.
\end{proof}

By the following lemma, $z^{k\beta}$'s can be viewed as independent variables formally and $\nabla_{j\alpha}$'s are derivatives with respect to these variables.
\begin{lem}\label{l3.1}$\nabla_{j\alpha}z^{k\beta}=2\delta_j^{k}\delta_{\alpha}^{\beta}$, for $z^{k\beta}$'s given by (\ref{2.3})and $\nabla_{j\alpha}$'s given by  (\ref{2.4}).
\end{lem}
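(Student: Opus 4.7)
The statement is a direct computational verification that the $\nabla_{j\alpha}$ operators behave as formal partial derivatives with respect to the entries $z^{k\beta}$ (up to the factor $2$). The plan is to reduce to a case check that, for each fixed $l$, only involves the four real variables $x_{4l},x_{4l+1},x_{4l+2},x_{4l+3}$.

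First, I would observe that for each index $l$, the entries $z^{(2l)0},z^{(2l)1},z^{(2l+1)0},z^{(2l+1)1}$ depend only on $x_{4l},x_{4l+1},x_{4l+2},x_{4l+3}$, and the operators $\nabla_{(2l)0},\nabla_{(2l)1},\nabla_{(2l+1)0},\nabla_{(2l+1)1}$ involve only partial derivatives with respect to these same four variables. Consequently, whenever the block index of $j$ differs from that of $k$ (i.e. $\lfloor j/2\rfloor\neq \lfloor k/2\rfloor$), we have $\nabla_{j\alpha}z^{k\beta}=0$, which matches $2\delta_j^k\delta_\alpha^\beta=0$.

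Second, I would fix one block $l$ and verify the $16$ remaining cases by direct substitution. For instance, $\nabla_{(2l)0}z^{(2l)0}=(\partial_{x_{4l}}+\mathbf{i}\partial_{x_{4l+1}})(x_{4l}-\mathbf{i}x_{4l+1})=1+1=2$, while $\nabla_{(2l)0}z^{(2l+1)1}=(\partial_{x_{4l}}+\mathbf{i}\partial_{x_{4l+1}})(x_{4l}+\mathbf{i}x_{4l+1})=1-1=0$; the remaining cases are checked in exactly the same way by pairing the appropriate row of (\ref{2.4}) with the corresponding entry of (\ref{2.3}). In every case the output is either $2$ (precisely when $(j,\alpha)=(k,\beta)$) or $0$.

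There is no real obstacle here; this lemma is a formal verification that the chosen signs and conjugations in (\ref{2.3}) and (\ref{2.4}) are exactly compatible, and it is the justification for treating the $z^{k\beta}$'s as independent holomorphic coordinates on which the $\nabla_{j\alpha}$'s act (up to the factor $2$) as $\partial/\partial z^{j\alpha}$. The only thing to be careful about is the sign pattern in the second column of (\ref{2.3}), which is responsible for the cancellations $1-1=0$ in the off-diagonal cases within a block.
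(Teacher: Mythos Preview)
Your proposal is correct and follows essentially the same approach as the paper: a direct block-by-block verification, noting first that cross-block terms vanish because the operators and coordinates in different blocks involve disjoint real variables, and then checking the remaining cases within a single block by the explicit computations $\nabla_{(2l)0}z^{(2l)0}=2$, $\nabla_{(2l)0}z^{(2l+1)1}=0$, etc.
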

\begin{proof}  Assume that $j=2l,\alpha=0$. By (\ref{2.3}), we have\begin{equation*}\begin{aligned}\nabla_{(2l)0 }z^{(2l)0}= (\partial_{x_{4l}}+\textbf{i}\partial_{x_{4l+1}})(x_{4l}-\textbf{i}x_{4l+1})=2;\\\nabla_{(2l)0}z^{(2l+1)1 }= (\partial_{x_{4l}}+\textbf{i}\partial_{x_{4l+1}})(x_{4l}+\textbf{i}x_{4l+1})=0.\end{aligned}\end{equation*}Note that $\nabla_{(2l)0 }$ is a differential operator with respect to variables $x_{4l}$ and $x_{4l+1}$, while $z^{(2l+1)0 }$ and $z^{(2l)1 }$ are functions of variables $x_{4l+1}$ and $x_{4l+3}$. So $\nabla_{(2l)0 }z^{(2l+1)0 }=\nabla_{(2l)0 }z^{(2l)1 }=0$. And $\nabla_{(2l)0 }z^{k\beta}=0$ for $k\neq 2l,2l+1$. It is similar to check other cases.
\end{proof}

\begin{cor}\label{lem1}$(1)$ $\nabla_{j\alpha}(\|q\|^2)=2\overline{z^{j\alpha}}.$\\
$(2)$ $$\triangle_{(2k)(2k+1)}(\|q\|^2)=-\triangle_{(2k+1)(2k)}(\|q\|^2)=4,$$ and $\triangle_{ij}(\|q\|^2)=0$ for other choices of $i,j$.\end{cor}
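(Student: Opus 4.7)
The plan is to prove both parts by direct computation from the definitions of $\nabla_{j\alpha}$ in (\ref{2.4}) and $z^{j\alpha}$ in (\ref{2.3}), using that $\|q\|^2 = \sum_{s=0}^{4n-1} x_s^2$ and hence $\partial_{x_s}\partial_{x_t}(\|q\|^2) = 2\delta_{st}$.

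For part (1), I would simply expand $\nabla_{(2l)\alpha}(\|q\|^2)$ and $\nabla_{(2l+1)\alpha}(\|q\|^2)$ for $\alpha=0,1$ using (\ref{2.4}) and compare with $\overline{z^{j\alpha}}$ read off from (\ref{2.3}). For example,
\[
\nabla_{(2l)0}(\|q\|^2) = (\partial_{x_{4l}}+\mathbf{i}\partial_{x_{4l+1}})(\|q\|^2) = 2x_{4l}+2\mathbf{i}x_{4l+1} = 2\overline{z^{(2l)0}},
\]
and the other three cases $\nabla_{(2l)1}$, $\nabla_{(2l+1)0}$, $\nabla_{(2l+1)1}$ are completely analogous. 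This establishes (1).

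For part (2), recall $\triangle_{ij} = \tfrac{1}{2}(\nabla_{i0}\nabla_{j1} - \nabla_{i1}\nabla_{j0})$ from (\ref{2.10}). The key structural observation is that $\nabla_{j\alpha}$ only differentiates with respect to the four variables $x_{4\lfloor j/2\rfloor},\ldots,x_{4\lfloor j/2\rfloor+3}$. Hence if $\lfloor i/2\rfloor \neq \lfloor j/2\rfloor$, the operators $\nabla_{i\alpha}$ and $\nabla_{j\beta}$ act on disjoint sets of variables, so $\nabla_{i\alpha}\nabla_{j\beta}(\|q\|^2)=0$ because mixed partials of $\sum_s x_s^2$ in distinct variables vanish; thus $\triangle_{ij}(\|q\|^2)=0$. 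When $i=j$, say $i=2l$, using part (1) I would compute $\nabla_{(2l)0}\nabla_{(2l)1}(\|q\|^2) = \nabla_{(2l)0}(2\overline{z^{(2l)1}}) = \nabla_{(2l)0}(-2x_{4l+2}-2\mathbf{i}x_{4l+3}) = 0$, and symmetrically for $\nabla_{(2l)1}\nabla_{(2l)0}$, giving $\triangle_{ii}(\|q\|^2)=0$.

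The only remaining cases are $\{i,j\}=\{2l,2l+1\}$. Here a direct computation using part (1) gives
\[
\nabla_{(2l)0}\nabla_{(2l+1)1}(\|q\|^2) = \nabla_{(2l)0}(2x_{4l}-2\mathbf{i}x_{4l+1}) = 2 + \mathbf{i}(-2\mathbf{i}) = 4,
\]
and similarly $\nabla_{(2l)1}\nabla_{(2l+1)0}(\|q\|^2) = -4$, whence $\triangle_{(2l)(2l+1)}(\|q\|^2) = \tfrac{1}{2}(4-(-4))=4$. The opposite sign for $\triangle_{(2l+1)(2l)}$ is immediate from the antisymmetry $\triangle_{ji}=-\triangle_{ij}$, which is a direct consequence of the commutativity (\ref{2.22}) of the $\nabla_{i\alpha}$'s. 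There is no genuine obstacle here: the whole argument reduces to careful bookkeeping of indices and expanding the four explicit forms of $\nabla_{j\alpha}$, so the hardest part is only tabulating the cases compactly rather than any conceptual step.
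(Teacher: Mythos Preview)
Your proof is correct. The computations are all accurate and the case analysis in part (2) is clean.

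The route differs slightly from the paper's. You work entirely in real coordinates, applying the operators $\nabla_{j\alpha}$ from (\ref{2.4}) directly to $\|q\|^2=\sum_s x_s^2$. The paper instead first rewrites $\|q\|^2=\sum_l (z^{(2l)0}z^{(2l+1)1}-z^{(2l)1}z^{(2l+1)0})$ in the complex variables and then applies Lemma~\ref{l3.1} ($\nabla_{j\alpha}z^{k\beta}=2\delta_j^k\delta_\alpha^\beta$) to read off each $\nabla_{j\alpha}(\|q\|^2)$ as $2$ times the appropriate $z$-variable, which equals $2\overline{z^{j\alpha}}$ by the conjugation relations in (\ref{2.3}). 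Your approach is arguably more elementary since it bypasses both the $z$-factorization of $\|q\|^2$ and Lemma~\ref{l3.1}; the paper's approach, on the other hand, showcases the formalism that $z^{k\beta}$ behave as independent variables with $\nabla_{j\alpha}$ as the corresponding derivatives, which is the viewpoint used repeatedly later (e.g.\ in Lemma~\ref{lem2}). Either way the content is a routine calculation, so the difference is one of presentation rather than substance.
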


\begin{proof}(1) Note that
\begin{equation*}\|q\|^2=\sum_{l=0}^{n-1}(x_{4l}^2+x_{4l+1}^2+x_{4l+2}^2+x_{4l+3}^2)=\sum_{l=0}^{n-1}(z^{(2l)0 }z^{(2l+1)1 }-z^{(2l)1 }z^{(2l+1)0 }).\end{equation*}By using Lemma \ref{l3.1} and definition of $z^{j\alpha}$ in (\ref{2.3}), we have\begin{equation*}\begin{aligned}\nabla_{(2l+1)1 }(\|q\|^2)&=\nabla_{(2l+1)1 }(z^{(2l)0 }z^{(2l+1)1 }-z^{(2l)1 }z^{(2l+1)0 })=2z^{(2l)0 }=2\overline{z^{(2l+1)1}},\\\nabla_{(2l)0}(\|q\|^2)&=\nabla_{(2l)0}(z^{(2l)0 }z^{(2l+1)1 }-z^{(2l)1 }z^{(2l+1)0 })=2z^{(2l+1)1 }=2\overline{z^{(2l)0 }},\\\nabla_{(2l)1 }(\|q\|^2)&=-2z^{(2l+1)0 }=2\overline{z^{(2l)1}},\\
\nabla_{(2l+1)0}(\|q\|^2)&=-2z^{(2l)1 }=2\overline{z^{(2l+1)0 }}.
\end{aligned}\end{equation*}
(2) It follows from (1) and Lemma \ref{l3.1} that $\nabla_{i0 }\nabla_{(2l+1)1 }(\|q\|^2)=\nabla_{i0 }(2z^{(2l)0 })=4\delta^{~~i}_{2l}$. Therefore,
$\triangle_{(2l)(2l+1)}(\|q\|^2)$ $=4,\triangle_{(2l+1)(2l)}(\|q\|^2)=-4,$
and $\triangle_{ij}(\|q\|^2)=0$ otherwise. \end{proof}

It follows that $\frac{1}{8}\triangle(\|q\|^2)=\beta_n$ is a positive
$2$-form and
\begin{equation}\label{3.5}\beta_n^n=\wedge^n\beta_n=n!~\Omega_{2n},\end{equation}is a positive
$2n$-form, where $\Omega_{2n}$ is defined by (\ref{2.41}).\par\vskip4mm

Let $\Omega$ be an open set in $\mathbb{H}^n$. Let $\mathcal{D}_0^{p}(\Omega)$ (respectively, $\mathcal{D}^{p}(\Omega)$) be the space of the $C_0(\Omega)$ (respectively, $C_0^\infty(\Omega)$) functions   valued in $\wedge^{p}\mathbb{C}^{2n}$, i.e.,
\begin{equation}\label{2.21}\mathcal{D}_0^{p}(\Omega)=C_0(\Omega,\wedge^{p}\mathbb{C}^{2n})\quad\text{and}\quad
\mathcal{D}^{p}(\Omega)=C_0^\infty(\Omega,\wedge^{p}\mathbb{C}^{2n}).
\end{equation}The elements of the latter one are often called the \emph{test
$p$-forms}. An element $\eta\in\mathcal{D}^{2k}(\Omega)$ is called a \emph{positive $2k$-form} (respectively, \emph{strongly positive $2k$-form}) if for any $q\in\Omega$, $\eta(q)$ is a positive (respectively, strongly positive) element.

\begin{pro}\label{t3.1}Let $u\in PSH\cap C^2(\Omega)$, then $\triangle u$ is a closed strongly positive
$2$-form.
\end{pro}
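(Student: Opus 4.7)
The statement has two parts---closedness and strong positivity---and I would argue them separately. Closedness is already an immediate consequence of Corollary~\ref{p2.33} applied with $k=1$; equivalently, Proposition~\ref{p1.1}(1)(2) gives $d_0\triangle u=d_0^2 d_1 u=0$ and $d_1\triangle u=-d_0 d_1^2 u=0$, so no new work is required here.

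For strong positivity my plan is to diagonalize the quaternionic Hessian pointwise via a unitary quaternionic change of variables, then read off the answer from the diagonal model. Fix $q_0\in\Omega$. Since $u$ is plurisubharmonic, the quaternionic Hessian $H(u)(q_0)=\bigl(\partial^2 u/\partial q_j\partial\bar q_k(q_0)\bigr)$ is a positive semidefinite quaternionic Hermitian matrix, so by the quaternionic spectral theorem there exists $U\in\text{U}_{\mathbb{H}}(n)$ with $\overline{U}^t H(u)(q_0) U=\text{diag}(\lambda_1,\ldots,\lambda_n)$, $\lambda_l\ge 0$. Setting $q=U\widetilde q$ and $\widetilde u(\widetilde q):=u(U\widetilde q)$, the quaternionic Hessian of $\widetilde u$ at $\widetilde q_0:=U^{-1}q_0$ is exactly this diagonal matrix. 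A direct calculation from (\ref{2.10}) and (\ref{2.1000})---structurally identical to the one carried out for $\|q\|^2$ in Corollary~\ref{lem1}, but now applied to the quadratic model $\sum_l\lambda_l|\widetilde q_l|^2$ whose second-order jet at $\widetilde q_0$ agrees with that of $\widetilde u$---then gives
\begin{equation*}
\widetilde\triangle\widetilde u(\widetilde q_0)=8\sum_{l=0}^{n-1}\lambda_l\,\widetilde\omega^{2l}\wedge\widetilde\omega^{2l+1}.
\end{equation*}
Each summand $\widetilde\omega^{2l}\wedge\widetilde\omega^{2l+1}$ is the pullback by the coordinate projection $e_l:\mathbb{H}^n\to\mathbb{H}$ of a standard basis $2$-form on $\mathbb{C}^2$, so it is elementary strongly positive, and combined with $\lambda_l\ge 0$ this places $\widetilde\triangle\widetilde u(\widetilde q_0)$ in $\text{SP}^2\mathbb{C}^{2n}$.

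It then remains to transport this conclusion back to the original coordinates. Corollary~\ref{c2.2} identifies $\triangle u(q_0)$ with $\widetilde\triangle\widetilde u(\widetilde q_0)$ as elements of $\wedge^2\mathbb{C}^{2n}$, and equation~(\ref{3.21}) shows that the induced $\text{GL}_{\mathbb{H}}(n)$-action on forms sends elementary strongly positive elements to elementary strongly positive elements, hence preserves the strongly positive cone. Consequently $\triangle u(q_0)\in\text{SP}^2\mathbb{C}^{2n}$, and since $q_0$ was arbitrary, $\triangle u$ is a strongly positive $2$-form.

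The step I expect to require the most bookkeeping is the explicit computation of $\widetilde\triangle\widetilde u(\widetilde q_0)$ when the quaternionic Hessian is diagonal: one must track how each real eigenvalue $\lambda_l$ translates through the embedding $\tau$ into the $(2\times 2)$-block $\lambda_l I_2$ inside the complex matrix (\ref{2.4}), and then check that every $\widetilde\triangle_{ij}\widetilde u(\widetilde q_0)$ with $i,j$ in distinct diagonal blocks vanishes---which is precisely the mechanism already illustrated, for a single block, by Corollary~\ref{lem1}.
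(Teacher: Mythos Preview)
Your proposal follows essentially the same route as the paper: closedness from Corollary~\ref{p2.33}, pointwise diagonalization of the quaternionic Hessian by a unitary $U\in\text{U}_{\mathbb{H}}(n)$, reading off strong positivity from the diagonal model, and transporting back via Corollary~\ref{c2.2}. The paper carries out exactly this strategy.

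One imprecision worth flagging: you assert that the quadratic model $\sum_l\lambda_l|\widetilde q_l|^2$ has ``second-order jet at $\widetilde q_0$ agreeing with that of $\widetilde u$'', but knowing only that the quaternionic Hessian is $\text{diag}(\lambda_1,\ldots,\lambda_n)$ does \emph{not} determine the full real $2$-jet of $\widetilde u$ (e.g.\ $\partial^2\widetilde u/\partial x_0^2$ is not fixed). What you actually need---and what the paper supplies---is the explicit identity
\[
\frac{\partial^2 u}{\partial \bar q_l\partial q_k}=2\bigl(\triangle_{(2l)(2k+1)}u+\textbf{j}\,\triangle_{(2l+1)(2k+1)}u\bigr),
\]
together with the conjugation relation $\triangle_{(2l)(2k)}u=\pm\overline{\triangle_{(2l+1)(2k+1)}u}$, which shows that \emph{every} $\triangle_{ij}u$ is determined by the quaternionic Hessian alone. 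Once you have this, diagonality of the Hessian forces the off-block $\widetilde\triangle_{ij}\widetilde u(\widetilde q_0)$ to vanish and yields your diagonal expression. This is precisely the ``bookkeeping'' you anticipated, and it is the heart of the paper's proof.
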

\begin{proof} It is closed by Corollary \ref{p2.33}. Note that by (\ref{2.4}) and (\ref{2.10}) we have\begin{equation*}\begin{aligned}\frac{\partial^2u}{\partial \bar{q_l}\partial q_k}&=\left(\frac{\partial }{\partial x_{4l}}+\textbf{i}\frac{\partial }{\partial x_{4l+1}}+\textbf{j}\frac{\partial }{\partial x_{4l+2}}+\textbf{k}\frac{\partial }{\partial x_{4l+3}}\right)\left(\frac{\partial }{\partial x_{4k}}-\textbf{i}\frac{\partial }{\partial x_{4k+1}}-\textbf{j}\frac{\partial }{\partial x_{4k+2}}-\textbf{k}\frac{\partial }{\partial x_{4k+3}}\right)u\\&=\left(\nabla_{(2l)0}+\textbf{j}\nabla_{(2l+1)0}\right)\left(\nabla_{(2k+1)1}-\textbf{j}\nabla_{(2k+1)0}\right)u\\
&=\left(\nabla_{(2l)0}\nabla_{(2k+1)1}-\nabla_{(2l)1}\nabla_{(2k+1)0}\right)u+\textbf{j}\left(\nabla_{(2l+1)0}\nabla_{(2k+1)1}-\nabla_{(2l+1)1}\nabla_{(2k+1)0}\right)u\\
&=2\left(\triangle_{(2l)(2k+1)}u+\textbf{j}\triangle_{(2l+1)(2k+1)}u\right).\end{aligned}\end{equation*}
For any $u\in PSH\cap C^2(\Omega)$, set $\widetilde{u}(\widetilde{q})=u(A\widetilde{q})$. By the Claim (pp.21) in \cite{alesker1} or Corollary 3.1 in
\cite{wang1},\begin{equation}\label{A*A}\left(\frac{\partial^2\widetilde{u}}{\partial
\overline{\widetilde{q}}_l\partial \widetilde{q}_k}(\widetilde{q})\right)=\overline{A}^t\left(\frac{\partial^2u}{\partial
\overline{q}_l\partial
q_k}(A\widetilde{q})\right)A.
\end{equation}By choosing a suitable $A\in \text{GL}_\mathbb{H}(n)$, the right hand side above is a diagonal real matrix. Hence $\widetilde{\triangle}_{(2l)(2k+1)}\widetilde{u}=0$ for  $k\neq l$ and $ \widetilde{\triangle}_{(2l+1)(2k+1)}\widetilde{u}=0$ for all $k, l$. Also by definition,  $\widetilde{\triangle}_{(2l)(2k)}\widetilde{u}=-\overline{\widetilde{\triangle}_{(2l+1)(2k+1)}\widetilde{u}}=0$.
Thus $\widetilde{\triangle}\widetilde{u}(\widetilde{q})=\sum\widetilde{\triangle}_{(2k)(2k+1)}\widetilde{u}(\widetilde{q})~\widetilde{\omega}^{2k}\wedge\widetilde{\omega}^{2k+1},$ which is a strongly positive $2$-form by $\widetilde{\triangle}_{(2k)(2k+1)}\widetilde{u}(\widetilde{q})\geq 0$ (cf. (\ref{eq:A-laplace})). Then the result follows from the invariance of $\triangle u$ in Corollary \ref{c2.2}.\end{proof}
\begin{rem}\label{rem:real}
    Any positive $2k$-form $\varphi$ is automatically real, i.e., $\varphi\in C_0(\Omega,\wedge_{\mathbb{R}}^{2k}\mathbb{C}^{2n})$. By Proposition \ref{p3.12} above, $\varphi$ is a $\mathbb{C}$-linear combination of strongly positive $2k$-elements: $\varphi=\sum a_t\varphi_t+i\sum b_t\varphi_t$, where $a_t,b_t$ are real functions and $\varphi_t$'s are strongly positive. If $\varphi$ is positive, by definition $\sum b_t\varphi_t\wedge\eta=0$ for any strongly positive $(2n-2k)$-element $\eta$, then we get $\sum b_t\varphi_t=0$.
\end{rem}
The following is a criterion of positivity (cf. Proposition 3.2.4 in \cite{klimek} for this criterion in the complex case).
\begin{pro}\label{p3.11}Let $\omega\in\wedge^{2k}\mathbb{C}^{2n}$. Then $\omega\in\wedge^{2k}_{\mathbb{R}+}\mathbb{C}^{2n}$ if and only if for any right $\mathbb{H}$-linear
map $g:\mathbb{H}^{k}\rightarrow \mathbb{H}^{n}$, $g^*\omega\in \wedge^{2k}_{\mathbb{R}+}\mathbb{C}^{2k}$, i.e., $g^*\omega=\kappa~\Omega_{2k}$ for some $\kappa\geq0$.
\end{pro}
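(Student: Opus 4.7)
The plan is to reduce both directions to computations involving the standard inclusion $\iota:\mathbb{H}^k\hookrightarrow\mathbb{H}^n$ onto the first $k$ coordinates, together with the observation that the induced action of $\text{GL}_{\mathbb{H}}(n)$ preserves the cone $\wedge^{2k}_{\mathbb{R}+}\mathbb{C}^{2n}$. To prove this preservation, given positive $\omega$ and $A\in\text{GL}_{\mathbb{H}}(n)$, I would rewrite $A.\omega\wedge\eta=A.(\omega\wedge A^{-1}.\eta)$ for any elementary strongly positive $(2n-2k)$-element $\eta$; by (\ref{3.21}) the $\text{GL}_{\mathbb{H}}(n)$-action permutes elementary strongly positive elements, so $\omega\wedge A^{-1}.\eta=\kappa\,\Omega_{2n}$ for some $\kappa\geq 0$ and hence $A.(\kappa\,\Omega_{2n})=\kappa\det\tau(A)\,\Omega_{2n}$. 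The scalar $\det\tau(A)$ is real (take determinants in (\ref{2.231})) and positive, because $\text{GL}_{\mathbb{H}}(n)$ is connected with $\det\tau(I)=1$.

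For the forward implication, let $g:\mathbb{H}^k\to\mathbb{H}^n$ be right $\mathbb{H}$-linear. If $g$ has right $\mathbb{H}$-rank $j<k$, then $\tau(g)$ has complex rank $2j$; the matrix of $g^*:\mathbb{C}^{2n}\to\mathbb{C}^{2k}$ is $\tau(g)^t$, whose image has complex dimension $2j<2k$, so the induced map on $\wedge^{2k}$ vanishes and $g^*\omega=0$. If $g$ has full rank $k$, factor $g=\Phi\circ\iota$ with $\Phi\in \text{GL}_{\mathbb{H}}(n)$. By the preservation lemma $\Phi^*\omega$ is still positive; expanding $\Phi^*\omega=\sum_{|I|=2k}c_I\omega^I$, the pullback picks out $\iota^*\Phi^*\omega=c_{(0,1,\ldots,2k-1)}\Omega_{2k}$, and the same coefficient appears in
\[
\Phi^*\omega\wedge(\omega^{2k}\wedge\ldots\wedge\omega^{2n-1})=c_{(0,1,\ldots,2k-1)}\,\Omega_{2n}.
\]
Since $\omega^{2k}\wedge\ldots\wedge\omega^{2n-1}$ is itself elementary strongly positive (realized by taking $\eta_j$ to be the $(k+j-1)$-th coordinate projection $\mathbb{H}^n\to\mathbb{H}$), positivity of $\Phi^*\omega$ forces $c_{(0,1,\ldots,2k-1)}\geq 0$, hence $g^*\omega\in\wedge^{2k}_{\mathbb{R}+}\mathbb{C}^{2k}$.

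For the reverse implication, let $\eta=\eta_1^*\widetilde\omega^0\wedge\eta_1^*\widetilde\omega^1\wedge\ldots\wedge\eta_{n-k}^*\widetilde\omega^0\wedge\eta_{n-k}^*\widetilde\omega^1\in\text{SP}^{2n-2k}\mathbb{C}^{2n}$ with $\eta_1,\ldots,\eta_{n-k}:\mathbb{H}^n\to\mathbb{H}$ linearly independent. Complete them into an invertible $\Phi\in \text{GL}_{\mathbb{H}}(n)$ whose last $n-k$ coordinate projections are $\eta_1,\ldots,\eta_{n-k}$; a direct check using (\ref{g^*}) gives $\eta=\Phi^*(\omega^{2k}\wedge\ldots\wedge\omega^{2n-1})$. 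Applying $(\Phi^{-1})^*$ and using $(\Phi^{-1})^*\Phi^*=\mathrm{id}$ then yields
\[
(\Phi^{-1})^*(\omega\wedge\eta)=(\Phi^{-1})^*\omega\wedge\omega^{2k}\wedge\ldots\wedge\omega^{2n-1}=\kappa\,\Omega_{2n},
\]
where $\kappa$ is the coefficient of $\omega^0\wedge\ldots\wedge\omega^{2k-1}$ in $(\Phi^{-1})^*\omega$. Setting $g=\Phi^{-1}\circ\iota$, one has $g^*\omega=\iota^*(\Phi^{-1})^*\omega=\kappa\,\Omega_{2k}$, so by hypothesis $\kappa\geq 0$. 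Pushing forward, $\omega\wedge\eta=\kappa\det\tau(\Phi)\,\Omega_{2n}$ is a nonnegative multiple of $\Omega_{2n}$. Reality of $\omega$, required for membership in $\wedge^{2k}_{\mathbb{R}+}\mathbb{C}^{2n}\subseteq\wedge^{2k}_{\mathbb{R}}\mathbb{C}^{2n}$, follows by applying the same pullback machinery to $\omega-\rho(\textbf{j})\omega$: pullbacks commute with $\rho(\textbf{j})$ (cf.\ (\ref{eq:rouj-A})), so the realness of every $g^*\omega$ forces all coefficients of $\omega-\rho(\textbf{j})\omega$ to vanish under the transitive $\text{GL}_{\mathbb{H}}(n)$-action on basis monomials. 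The main obstacles are the careful bookkeeping of $\Phi^*$ versus $(\Phi^{-1})^*$, the verification that $\det\tau(\Phi)>0$ throughout $\text{GL}_{\mathbb{H}}(n)$, and handling the rank-deficient case of $g$ by the above dimension count.
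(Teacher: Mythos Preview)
Your argument is correct and follows the same underlying idea as the paper—relating $g^*\omega$ to a wedge $\omega\wedge\eta$ via an auxiliary isomorphism of $\mathbb{H}^n$—but the technical execution differs in a useful way.

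The paper fixes the elementary strongly positive $\eta$, builds the dual vectors $v_1,\ldots,v_n$, and then runs a quaternionic Gram--Schmidt process so that the resulting map $(g,g')$ is \emph{unitary}; it can then invoke the exact invariance $A.\Omega_{2n}=\Omega_{2n}$ for $A\in U_{\mathbb{H}}(n)$ from (\ref{2.38}). This orthogonalization requires checking that the modified $\eta_j$'s still produce the same $\eta$ (up to a positive scalar after normalization), which the paper sketches in a parenthetical remark. By contrast, you work directly with an arbitrary $\Phi\in\text{GL}_{\mathbb{H}}(n)$ and replace exact invariance by the weaker fact $\Phi.\Omega_{2n}=\det\tau(\Phi)\,\Omega_{2n}$ with $\det\tau(\Phi)>0$, proved via reality of $\det\tau(\Phi)$ (from (\ref{2.231})) and connectedness of $\text{GL}_{\mathbb{H}}(n)$. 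This sidesteps the orthogonalization entirely and makes both implications symmetric. You also explicitly dispose of the rank-deficient case $g^*\omega=0$, which the paper leaves implicit.

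Two small remarks on presentation. First, your phrase ``transitive $\text{GL}_{\mathbb{H}}(n)$-action on basis monomials'' is imprecise—the action on decomposable $2k$-vectors is not literally transitive—but the substance is right: what you actually use is that elementary strongly positive elements span $\wedge^{2n-2k}\mathbb{C}^{2n}$ (Proposition \ref{p3.12}(1)), so $g^*(\omega-\rho(\textbf{j})\omega)=0$ for all $g$ forces $(\omega-\rho(\textbf{j})\omega)\wedge\eta=0$ for all $\eta$, hence $\omega=\rho(\textbf{j})\omega$. Second, the reality check is not strictly needed, since the paper's Remark \ref{rem:real} already derives reality from the positivity condition you establish; but including it does no harm.
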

\begin{proof} Let $\eta=\eta_{k+1}^*\widetilde{\omega}^0\wedge \eta_{k+1}^*\widetilde{\omega}^1\wedge\ldots\wedge\eta_n^*\widetilde{\omega}^0\wedge \eta_n^*\widetilde{\omega}^1$ be an elementary strongly positive element, for some right $\mathbb{H}$-linearly independent mappings $\eta_j:\mathbb{H}^n\rightarrow \mathbb{H}$ , $j=k+1,\ldots,n$, where $\{\widetilde{\omega}^0,\widetilde{\omega}^1\}$ is a basis of $\mathbb{C}^{2}$. Choose right $\mathbb{H}$-linear mappings $\eta_1,\ldots,\eta_k:\mathbb{H}^n\rightarrow \mathbb{H}$ so that $\eta_1,\ldots,\eta_n$ are right $\mathbb{H}$-linearly independent, and let $v_1,\ldots,v_{n}$ be dual to $\eta_1,\ldots,\eta_n$. By orthogonalization process, we can assume that $v_1,\ldots,v_{n}$ is a unitary orthonormal basis for $\mathbb{H}^{n}$. (If not, let $v_{k+1}'=v_{k+1},v_{k+2}'=v_{k+2}+ v_{k+1}\alpha$ such that $(v_{k+2}',v_{k+1}')=0$ for some proper constant $\alpha\in \mathbb{H}$. And also let $\eta_{k+1}'=\eta_{k+1}-\alpha\eta_{k+2},\eta_{k+2}'=\eta_{k+2}$. Note that $(\eta_{k+1}')^*(\widetilde{\omega}^0\wedge \widetilde{\omega}^1)\wedge(\eta_{k+2}')^*(\widetilde{\omega}^0\wedge \widetilde{\omega}^1)=\eta_{k+1}^*(\widetilde{\omega}^0\wedge \widetilde{\omega}^1)\wedge\eta_{k+2}^*(\widetilde{\omega}^0\wedge \widetilde{\omega}^1)$. Repeating this procedure, $v_{k+1}',\ldots,v_{n}'$ are mutually orthogonal and $\eta$ remains unchanged. Then    normalize them. Now take $v_1',\ldots,v_k'$ such that $v_1',\ldots,v_{n}'$ is a unitary orthonormal basis for $\mathbb{H}^{n}$). See \cite{CMW} for more about quaternionic linear algebra.

 Define $g:\mathbb{H}^{k}\rightarrow \mathbb{H}^{n}$, $g(q_1,\ldots,q_k)=\sum_{l=1}^kv_lq_l$, and $g':\mathbb{H}^{n-k}\rightarrow \mathbb{H}^{n}$, $g'(q_{k+1},\ldots,q_n)=\sum_{l=k+1}^nv_lq_l$.
Since $\eta_j$ is right $\mathbb{H}$-linear, we have
\begin{equation*}\eta_j\left(g'(q_{k+1},\ldots,q_n)\right)=\eta_j\left(\sum_{l=k+1}^nv_lq_l\right)=\sum_{l=k+1}^n\eta_j(v_l)q_l=q_j,
\end{equation*}for $j=k+1,\ldots,n$. Denote $\xi_j=\eta_j\circ g'$, $j=k+1,\ldots,n$. Then $\xi_j:\mathbb{H}^{n-k}\rightarrow \mathbb{H}$ is right $\mathbb{H}$-linear, $\xi_j(q_{k+1},\ldots,q_n)=q_j$, and $\xi_{k+1},\ldots,\xi_n$ are right $\mathbb{H}$-linearly independent. It follows from the definition that $\xi_{k+1}^*\widetilde{\omega}^0\wedge \xi_{k+1}^*\widetilde{\omega}^1\wedge\ldots\wedge\xi_n^*\widetilde{\omega}^0\wedge \xi_n^*\widetilde{\omega}^1$ is an elementary strongly positive element in $\wedge^{2n-2k}\mathbb{C}^{2n-2k}$. Then we have
\begin{equation}\label{sposi}\begin{aligned}&(g,g')^*(\omega\wedge\eta)=g^*\omega\wedge (g')^*\eta=g^*\omega\wedge (g')^*\left[\eta_{k+1}^*\widetilde{\omega}^0\wedge \eta_{k+1}^*\widetilde{\omega}^1\wedge\ldots\wedge \eta_n^*\widetilde{\omega}^0\wedge \eta_n^*\widetilde{\omega}^1\right]\\
&=g^*\omega\wedge (\eta_{k+1}\circ g')^*\widetilde{\omega}^0\wedge (\eta_{k+1}\circ g')^*\widetilde{\omega}^1\wedge\ldots\wedge (\eta_{n}\circ g')^*\widetilde{\omega}^0\wedge (\eta_{n}\circ g')^*\widetilde{\omega}^1\\
&=g^*\omega\wedge \xi_{k+1}^*\widetilde{\omega}^0\wedge \xi_{k+1}^*\widetilde{\omega}^1\wedge\ldots\wedge\xi_n^*\widetilde{\omega}^0\wedge \xi_n^*\widetilde{\omega}^1.
\end{aligned}\end{equation}
Here $(g')^*\eta_{j}^*(\widetilde{\omega}^\alpha)=(\eta_{j}\circ g')^*\widetilde{\omega}^\alpha$ by Proposition \ref{p2.1} (1). If $g^*\omega$ is positive, then $(g,g')^*(\omega\wedge\eta)$ is positive. $(g,g')$ is a unitary transformation which preserves positivity of $2n$-elements by (\ref{2.38}). Hence $\omega\wedge\eta$ is positive. \par
Conversely, let $g:\mathbb{H}^{k}\rightarrow \mathbb{H}^{n}$ be a right $\mathbb{H}$-linear mapping and let $v_1,\ldots,v_{n}$ be a basis for $\mathbb{H}^{n}$ such that $v_1,\ldots,v_{p}$ are the columns of the matrix representing $g$. Let $\eta_1,\ldots,\eta_n$ be a basis dual to $v_1,\ldots,v_{n}$. Define $g'$ as before, then (\ref{sposi}) holds. Hence if $\omega$ is positive, then so is $g^*\omega$.
\end{proof}
\subsection{Positive currents}\par

Now let us discuss positive currents. For more information about the
complex currents see \cite{demailly,harvey2,klimek,lelong2,lelong}.\par  The elements of the dual space $(\mathcal {D}^{2n-p}(\Omega))' $
are called \emph{$p$-currents}. The elements of the dual space
$(\mathcal {D}_0^{2n-p}(\Omega))' $ are called \emph{$p$-currents of
order zero}. Obviously, the $2n$-currents are just the
distributions on $\Omega$, whereas the $2n$-currents of order
zero are Radon measures on $\Omega$.
 Let $\psi$ be a $p$-form whose coefficients are
locally integrable in $\Omega$. One can associate with $\psi$ the $p$-current $T_{\psi}$ defined by
\begin{equation}\label{2.25}T_{\psi}(\varphi)=\int_\Omega\psi\wedge\varphi,
\end{equation}for any $\varphi\in \mathcal {D}^{2n-p}(\Omega)$.\par

Let $I=(i_1,\ldots,i_{2k})$ be multi-index such that $1\leq
i_1<\ldots< i_{2k}\leq n$. Denote by $\widehat{I}=(l_1,\ldots,l_{2n-2k})$ the
\emph{increasing complements} to $I$ in the set $\{0,1,\ldots,2n-1\}$, i.e., $\{i_1,\ldots,i_{2k}\}\cup\{l_1,\ldots,l_{2n-2k}\}=\{1,\ldots,2n\}$. For a $2k$-current $T$ in $\Omega$ and multi-index $I$, define distributions $T_I$ by
$T_I(f)=\varepsilon_IT(f\omega^{\widehat{I}})$ for $f\in
C_0^\infty(\Omega)$, where $\varepsilon_I=\pm1$ is so chosen that
\begin{equation}\label{2.26}\varepsilon_I\omega^I\wedge \omega^{\widehat{I}}=\Omega_{2n}.
\end{equation}
In this way the current $T$ can
be regarded as a $2k$-form with the distributional
coefficients $T_I$, i.e. we can write  
\begin{equation}\label{eq:coef-T}
     T=\sum_IT_I\omega^I,
\end{equation}
  where the
summation is taken over increasing multi-indices of length
$2k$, since $\omega^I$ is dual to $\varepsilon_I\omega^{\widehat{I}}$. If $T$ is a current of order $0$, the
distributions $T_I$ are Radon measures and the current $T$ can be
regarded as a $2k$-form with the measure coefficients, i.e.,
\begin{equation}\label{3.91}T(\varphi)=\sum_I\varepsilon_IT_I(\varphi_{\widehat{I}}),
\end{equation}for $\varphi=\sum_{\widehat{I}}\varphi_{\widehat{I}}\omega^{\widehat{I}}\in \mathcal {D}^{2n-2k}(\Omega)$,
where $I$ and $\widehat{I}$ are increasing. \par
If $T$
is a $2k$-current on $\Omega$, $\psi$ is a $2l$-form on
$\Omega$ with coefficients in $C^{\infty}(\Omega)$, and $k+l\leq n$,
then the
formula\begin{equation}\label{2.27}(T\wedge\psi)(\varphi)=T(\psi\wedge\varphi)
 \qquad\text{for}~~~\varphi\in \mathcal {D}^{2n-2k-2l}(\Omega)
\end{equation}defines a $(2k+2l)$-current. In particular, if $\psi$ is a smooth function, $\psi T(\varphi)=T(\psi\varphi)$.\par

A $2k$-current $T$ is said to be \emph{positive} if we have $T(\eta)\geq0$ for any $\eta\in C_0^\infty(\Omega,SP^{2n-2k}\mathbb{C}^{2n})$. In other words, $T$ is positive if for any $\eta\in C_0^\infty(\Omega,SP^{2n-2k}\mathbb{C}^{2n})$,   $T\wedge\eta=\mu~\Omega_{2n}$ for some    positive
distribution $\mu$ (and hence a measure).
\begin{pro}\label{p3.13}Any positive $2k$-current $T$ on $\Omega$ has measure
coefficients (i.e. is of order zero), and we can write $T=\sum_IT_I\omega^I$ for some complex Radon measures $T_I$, where the
summation is taken over increasing multi-indices $I=(i_1,\ldots,i_{2k})$.\end{pro}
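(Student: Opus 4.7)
The plan is to show that each distributional coefficient $T_I$ appearing in the expansion $T=\sum_I T_I\omega^I$ from (\ref{eq:coef-T}) is in fact a complex Radon measure, by reducing the question to the positivity testing condition against strongly positive forms. The key input is Proposition \ref{p3.12}(1), which tells us that $\wedge^{2n-2k}\mathbb{C}^{2n}=\mathrm{span}_{\mathbb{C}}\mathrm{SP}^{2n-2k}\mathbb{C}^{2n}$, so every basis covector $\omega^{\widehat{I}}$ can be written as a finite complex linear combination of strongly positive $(2n-2k)$-elements.

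Concretely, fix an increasing multi-index $I$ of length $2k$ and expand $\omega^{\widehat{I}}=\sum_{j=1}^{m}c_j\xi_j$ with $c_j\in\mathbb{C}$ and $\xi_j\in\mathrm{SP}^{2n-2k}\mathbb{C}^{2n}$. For each $j$, consider the linear functional $\Lambda_j\colon C_0^\infty(\Omega)\to\mathbb{C}$ defined by $\Lambda_j(f):=T(f\xi_j)$. Whenever $f\geq 0$, the form $f\xi_j$ still belongs to $C_0^\infty(\Omega,\mathrm{SP}^{2n-2k}\mathbb{C}^{2n})$ since $\mathrm{SP}^{2n-2k}\mathbb{C}^{2n}$ is a convex cone stable under multiplication by non-negative scalars. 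By the defining positivity of $T$, this forces $\Lambda_j(f)\geq 0$. Hence $\Lambda_j$ is a positive linear functional on $C_0^\infty(\Omega)$, and a standard Riesz-type extension argument produces a positive Radon measure $\mu_j$ on $\Omega$ with $\Lambda_j(f)=\int_\Omega f\,d\mu_j$ for all $f\in C_0^\infty(\Omega)$.

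Combining everything, for any $f\in C_0^\infty(\Omega)$,
\begin{equation*}
T_I(f)=\varepsilon_I T(f\omega^{\widehat{I}})=\varepsilon_I\sum_{j=1}^m c_j\Lambda_j(f)=\varepsilon_I\sum_{j=1}^m c_j\int_\Omega f\,d\mu_j,
\end{equation*}
which expresses $T_I$ as a finite complex linear combination of positive Radon measures and therefore as a complex Radon measure itself. In particular $T_I$ extends continuously to $C_0(\Omega)$, so $T$ is a current of order zero with the required representation.

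The proof is largely bookkeeping once Proposition \ref{p3.12}(1) is available, and the only subtlety I anticipate is the passage from a positive linear functional on $C_0^\infty(\Omega)$ to a Radon measure; this is a routine consequence of the density of non-negative test functions in non-negative compactly supported continuous functions, but it needs to be invoked explicitly so that the local boundedness of $T_I$ is justified without relying on smoothness of the coefficients.
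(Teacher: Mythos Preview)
Your proof is correct and follows essentially the same approach as the paper: both use Proposition \ref{p3.12}(1) to express the relevant $(2n-2k)$-elements as $\mathbb{C}$-linear combinations of strongly positive elements, then observe that testing $T$ against $f\xi$ with $f\geq 0$ and $\xi$ strongly positive yields a positive distribution and hence a positive Radon measure. The only cosmetic difference is that the paper selects a spanning set $\{\varphi_L\}\subseteq \mathrm{SP}^{2n-2k}\mathbb{C}^{2n}$ and works with the dual basis $\{\widetilde{\varphi_L}\}$ of $\wedge^{2k}\mathbb{C}^{2n}$ to obtain coefficients that are directly positive measures, whereas you stay in the standard basis $\{\omega^I\}$ and decompose each $\omega^{\widehat I}$ into strongly positive pieces, arriving at $T_I$ as a finite complex combination of positive measures; the content is the same.
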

\begin{proof}By Proposition \ref{p3.12}, one can find $\{\varphi_L\}\subseteq SP^{2n-2k}\mathbb{C}^{2n}$ such that any $\eta\in\wedge^{2n-2k}\mathbb{C}^{2n}$ is a $\mathbb{C}$-linear combination of $\varphi_L$, i.e., $\eta=\sum\lambda_L\varphi_L$ for some $\lambda_L\in\mathbb{C}$. Let $\{\widetilde{\varphi_L}\}$ be a basis of $\wedge^{2k}\mathbb{C}^{2n}$ which is dual to $\{\varphi_L\}$. Let $T$ be a positive $2k$-current on $\Omega$. Then $T=\sum T_L\widetilde{\varphi_L}$ with the distributional coefficients $T_L$ by (\ref{eq:coef-T}). If $\psi$ is a nonnegative test function, $\psi\varphi_L\in C_0^\infty(\Omega,SP^{2n-2k}\mathbb{C}^{2n})$. Then $T_L(\psi)=T(\psi\varphi_L)\geq0$ by definition. It follows that $T_L$ is a positive distribution, thus is a positive measure.
\end{proof}
It follows from Proposition \ref{p3.13} that for a positive $2k$-current $T$ and a   test $(2n-2k)$-form $\varphi$, we can write $T\wedge\varphi=\mu~\Omega_{2n}$ for some   Radon measure $\mu$. By (\ref{3.91}), we have
\begin{equation}\label{3.92}T(\varphi)=\int_\Omega T\wedge\varphi.
\end{equation}

The following Proposition is obvious and will be used frequently.
\begin{pro}(1) (linearity) For $2n$-currents $T_1$ and $T_2$ with (Radon) measure coefficients, we have
$$\int_\Omega\alpha T_1+ \beta T_2=\alpha\int_\Omega T_1+ \beta \int_\Omega T_2.$$
(2) If $T_1\leq T_2$ as positive $2n$-currents (i.e. $\mu_1\leq\mu_2$ if we write $T_j=\mu_j\Omega_{2n}$, $j=1,2$), then $\int_\Omega T_1\leq \int_\Omega T_2$.
\end{pro}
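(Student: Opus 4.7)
The plan is to reduce both statements directly to the definition (\ref{2.273}) of $\int_\Omega F$ for a $2n$-current $F$ with measure coefficient, together with the elementary linearity and monotonicity properties of the Lebesgue-type integral of Radon (signed) measures. Because a $2n$-current with measure coefficients is, by the identification $F = \mu\,\Omega_{2n}$, nothing more than a (possibly complex) Radon measure on $\Omega$, each statement should follow from the analogous property of measures once we unwind the definitions.

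For part (1), I would begin by writing $T_1 = \mu_1\,\Omega_{2n}$ and $T_2 = \mu_2\,\Omega_{2n}$, where $\mu_1,\mu_2$ are the measure coefficients. Since $\Omega_{2n}$ is a fixed element of $\wedge^{2n}\mathbb{C}^{2n}$, the linearity of the wedge product representation gives
\begin{equation*}
\alpha T_1 + \beta T_2 = (\alpha\mu_1 + \beta\mu_2)\,\Omega_{2n}.
\end{equation*}
Applying (\ref{2.273}) to both sides and using linearity of integration with respect to (complex) Radon measures yields
\begin{equation*}
\int_\Omega (\alpha T_1 + \beta T_2) = \int_\Omega (\alpha\mu_1 + \beta\mu_2) = \alpha\int_\Omega \mu_1 + \beta\int_\Omega \mu_2 = \alpha\int_\Omega T_1 + \beta\int_\Omega T_2.
\end{equation*}

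For part (2), the hypothesis $T_1 \leq T_2$ is, by the stated interpretation, precisely the pointwise inequality of measures $\mu_1 \leq \mu_2$. Monotonicity of integration of nonnegative measures (applied to the nonnegative measure $\mu_2 - \mu_1$) gives $\int_\Omega \mu_1 \leq \int_\Omega \mu_2$, and another application of (\ref{2.273}) rewrites this as $\int_\Omega T_1 \leq \int_\Omega T_2$.

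There is essentially no obstacle: the only thing that needs to be verified carefully is that the decomposition $F = \mu\,\Omega_{2n}$ is unique (so that the definition (\ref{2.273}) is well-posed and the operations ``$\alpha T_1 + \beta T_2$'' and ``$T_1 \leq T_2$'' correspond unambiguously to the same operations on the coefficient measures). Since $\dim_{\mathbb{C}} \wedge^{2n}\mathbb{C}^{2n} = 1$ with $\Omega_{2n}$ spanning it (as noted before (\ref{2.41})), the coefficient $\mu$ is uniquely determined by $F$, so both statements are immediate.
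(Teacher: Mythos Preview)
Your proposal is correct. The paper itself offers no proof at all, stating only that the proposition ``is obvious and will be used frequently,'' so your argument is precisely the unwinding of definitions that the authors leave implicit.
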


\begin{lem}\label{p3.9}$($Stokes-type formula$)$ Assume that $T=\sum_iT_i\omega^{\widehat{i}}$ is a smooth $(2n-1)$-form in $\Omega$, where $\omega^{\widehat{i}}=\omega^0\wedge\ldots\wedge\omega^{i-1}\wedge\omega^{i+1}\wedge\ldots\wedge\omega^{2n-1}$. Then for smooth function $h$, we have \begin{equation}\label{stokes}\int_\Omega hd_\alpha T=-\int_\Omega d_\alpha h\wedge T+\sum_{i=0}^{2n-1}(-1)^{i-1}\int_{\partial\Omega}hT_i~n_{i\alpha}~ dS,\end{equation}where $ n_{i\alpha}$, $i=0,1,\ldots,2n-1$, $\alpha=0,1$, is defined by the matrix:\begin{equation}\label{sss4.3}\begin{aligned}\left(
                                                                                                    \begin{array}{cc}
                                                                                                      n_{(2l)0 }& n_{(2l)1 }\\
                                                                                                      n_{(2l+1)0 }&n_{(2l+1)1 }\\
                                                                                                    \end{array}
                                                                                                  \right):=&\left(
                                                                                                                                  \begin{array}{cc}
                                                                                                                                    n_{4l}+\emph{\textbf{i}}~n_{4l+1}&-n_{4l+2}-\emph{\textbf{i}}~n_{4l+3}\\
                                                                                                                                    n_{4l+2}-\emph{\textbf{i}}~n_{4l+3}&n_{4l}-\emph{\textbf{i}}~n_{4l+1}\\
                                                                                                                                  \end{array}
                                                                                                                                \right),
\end{aligned}\end{equation}$l=0,1,\ldots,n-1$. Here \emph{\textbf{n}}$=(n_0,n_1,\ldots,n_{4n-1})$ is the unit outer normal vector to $\partial\Omega$ and $dS$ denotes the surface measure of $\partial\Omega$. In particular, if $h=0$ on $\partial\Omega$, we have \begin{equation*}\int_\Omega hd_\alpha T=-\int_\Omega d_\alpha h\wedge T,\qquad \alpha=0,1,\end{equation*}
\end{lem}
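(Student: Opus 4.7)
The plan is to reduce the identity to the classical divergence theorem on $\mathbb{R}^{4n}$ by combining the Leibniz rule for $d_\alpha$ (Proposition \ref{p1.1}(3)) with the explicit expression of the operators $\nabla_{i\alpha}$ in (\ref{2.4}).

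First I would apply the Leibniz rule of Proposition \ref{p1.1}(3) with $p=0$ to $hT$ (where $h$ is a smooth function, regarded as a $0$-form):
\begin{equation*}
   d_\alpha(hT)=d_\alpha h\wedge T+h\,d_\alpha T,
\end{equation*}
so that
\begin{equation*}
   \int_\Omega h\,d_\alpha T=\int_\Omega d_\alpha(hT)-\int_\Omega d_\alpha h\wedge T.
\end{equation*}
It therefore suffices to prove the boundary formula
\begin{equation*}
   \int_\Omega d_\alpha(hT)=\sum_{i=0}^{2n-1}(-1)^{i-1}\int_{\partial\Omega}hT_i\,n_{i\alpha}\,dS.
\end{equation*}

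Next, I would compute $d_\alpha(hT)$ explicitly from the definition (\ref{2.228}). Writing $hT=\sum_i hT_i\,\omega^{\widehat i}$ and using that $\omega^k\wedge\omega^{\widehat i}=0$ unless $k=i$, every cross term disappears and only the diagonal contributions survive:
\begin{equation*}
   d_\alpha(hT)=\sum_{i=0}^{2n-1}\nabla_{i\alpha}(hT_i)\,\omega^i\wedge\omega^{\widehat i}.
\end{equation*}
Rearranging $\omega^i$ into its natural slot contributes a sign $\varepsilon_{\{i\}}$ (cf.\ (\ref{2.26})), producing a scalar multiple of $\Omega_{2n}$. By the convention (\ref{2.24}), integration over $\Omega$ then reduces to the usual Lebesgue integral $\int_\Omega\nabla_{i\alpha}(hT_i)\,dV$ weighted by the appropriate sign.

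The final step is to apply the classical divergence theorem in $\mathbb{R}^{4n}$ to each real partial $\partial_{x_j}$. Since $\nabla_{j\alpha}$ is explicitly a complex linear combination of two consecutive $\partial_{x_k}$'s (see (\ref{2.4})), the identity $\int_\Omega\partial_{x_k}f\,dV=\int_{\partial\Omega}f\,n_k\,dS$ combines, term by term, into
\begin{equation*}
   \int_\Omega\nabla_{i\alpha}(hT_i)\,dV=\int_{\partial\Omega}hT_i\,n_{i\alpha}\,dS,
\end{equation*}
where $n_{i\alpha}$ is precisely the complex combination defined in (\ref{sss4.3}). Collecting the signs $\varepsilon_{\{i\}}$ from the previous step yields the desired boundary formula, and the ``in particular'' case with $h|_{\partial\Omega}=0$ is then immediate.

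The only real subtlety is bookkeeping: keeping track of the sign produced when reordering $\omega^i\wedge\omega^{\widehat i}$ into $\Omega_{2n}$, and matching the pair of real partials that makes up $\nabla_{i\alpha}$ with the correct entries of the normal vector $\mathbf{n}$ in the definition (\ref{sss4.3}). Since $h$ and the coefficients $T_i$ are smooth and $\Omega$ is a domain for which Stokes' theorem applies, no analytic difficulty appears; the content of the lemma is purely the translation between the operators $d_\alpha,\nabla_{i\alpha}$ and standard vector calculus on $\mathbb{R}^{4n}$.
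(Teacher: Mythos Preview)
Your proposal is correct and follows essentially the same approach as the paper: compute $d_\alpha(hT)$ explicitly as $\sum_i\nabla_{i\alpha}(hT_i)\,\omega^i\wedge\omega^{\widehat i}$, convert to a scalar multiple of $\Omega_{2n}$, apply the ordinary divergence theorem on $\mathbb{R}^{4n}$ to produce the boundary term with $n_{i\alpha}$, and combine with the Leibniz rule $d_\alpha(hT)=d_\alpha h\wedge T+h\,d_\alpha T$. The only cosmetic difference is that the paper performs the Stokes step first and invokes Leibniz at the end, whereas you invoke Leibniz first and then prove the boundary identity; the content is identical.
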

\begin{proof}Note that \begin{equation*}d_\alpha(hT)=\sum_{k,i} \nabla_{k\alpha }(hT_i)\omega^k\wedge\omega^{\widehat{i}}=\sum_{i} \nabla_{i\alpha }(hT_i)(-1)^{i-1}\Omega_{2n}.\end{equation*} Then
 \begin{equation*}\int_\Omega d_\alpha(hT)=\int_\Omega\sum_{i} \nabla_{i\alpha }(hT_i)(-1)^{i-1}dV=\int_{\partial\Omega}\sum_{i} hT_i(-1)^{i-1}n_{i\alpha}~dS\end{equation*}by (\ref{2.24}) and Stokes' formula. (\ref{stokes}) follows from the above formula and $d_\alpha(hT)=d_\alpha h\wedge T+hd_\alpha T$ by Proposition \ref{p1.1} (3).
\end{proof}

Now let us show that $d_\alpha F$ ($\alpha=0,1$), in the generalized sense (\ref{2.271}), coincides with the original definition given by (\ref{2.228}) when $F$ is smooth. Let $\eta$ be arbitrary $(2n-2k-1)$-test form   compactly supported  in $\Omega$.
It follows from Lemma \ref{p3.9} that $\int_\Omega d_\alpha(F\wedge\eta)=0.$ By Proposition \ref{p1.1} (3), $d_\alpha(F\wedge\eta)=d_\alpha F\wedge\eta+F\wedge d_\alpha\eta$. We have
\begin{equation}\label{3.121}-\int_{\Omega}F\wedge
d_\alpha\eta=\int_{\Omega}d_\alpha F\wedge\eta,\qquad\qquad~i.e.,\qquad(d_\alpha F)(\eta)=-F(d_\alpha\eta).\end{equation}

 And we also define $\triangle F$ in the
generalized sense, i.e., for each test $(2n-2k-2)$-form $\eta$,
\begin{equation}\label{3.8}
(\triangle
F)(\eta):=F(\triangle\eta).\end{equation}

 \begin{pro}\label{p3.10}Let $T$ be a $(2n-1)$-current with $\text{supp}~T\subseteq\Omega$ such that $d_\alpha T$ is a positive $2n$-current. Then \begin{equation*}\int_\Omega d_\alpha T=0.\end{equation*}
\end{pro}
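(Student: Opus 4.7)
The plan is to combine the compact support of $T$ with a standard cut-off function argument, exploiting the duality relation (\ref{2.271}). Since $T$ has compact support in $\Omega$, pick any $\chi\in C_0^\infty(\Omega)$ with $\chi\equiv 1$ on an open neighborhood $U$ of $\operatorname{supp} T$.

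First I would verify that $\operatorname{supp}(d_\alpha T)\subseteq \operatorname{supp} T$. Indeed, for any test form $\eta$ supported outside $\operatorname{supp} T$, the form $d_\alpha\eta$ is also supported outside $\operatorname{supp} T$, so $(d_\alpha T)(\eta)=-T(d_\alpha\eta)=0$ by (\ref{2.271}). In particular $d_\alpha T$, viewed as a positive $2n$-current, has compact support inside $\Omega$. By Proposition \ref{p3.13}, its single coefficient is a (finite) positive Radon measure $\mu$, so that $d_\alpha T=\mu~\Omega_{2n}$ and, by (\ref{2.273}), $\int_\Omega d_\alpha T=\mu(\Omega)$.

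Next I would evaluate $(d_\alpha T)(\chi)$ in two ways. On one hand, viewing $\chi$ as a test $0$-form paired with the $2n$-current $d_\alpha T$, the representation (\ref{3.91}) gives
\begin{equation*}
(d_\alpha T)(\chi)=\int_\Omega \chi\,d\mu=\mu(\Omega),
\end{equation*}
since $\chi\equiv 1$ on $\operatorname{supp}\mu\subseteq\operatorname{supp} T$. On the other hand, by the definition (\ref{2.271}),
\begin{equation*}
(d_\alpha T)(\chi)=-T(d_\alpha\chi).
\end{equation*}
Because $\chi$ is constant on the neighborhood $U$ of $\operatorname{supp} T$, the $1$-form $d_\alpha\chi$ vanishes identically on $U$, hence vanishes on an open neighborhood of $\operatorname{supp} T$; therefore $T(d_\alpha\chi)=0$. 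Comparing the two expressions yields $\mu(\Omega)=0$, which is precisely $\int_\Omega d_\alpha T=0$.

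The only mild obstacle is justifying the pairing $(d_\alpha T)(\chi)=\int_\Omega\chi\,d\mu$, which relies on $d_\alpha T$ being of order zero (so that the distributional coefficient is genuinely a Radon measure) and $\chi$ being a valid test $0$-form; both are supplied by Proposition \ref{p3.13} and the compact-support assumption on $T$. The rest of the proof is simply bookkeeping with the Stokes-type identity (\ref{3.121}) applied to the pair $(T,\chi)$ instead of to a form with vanishing boundary values.
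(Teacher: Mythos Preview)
Your proof is correct and follows essentially the same cut-off argument as the paper: write $d_\alpha T=\mu\,\Omega_{2n}$, pair against a smooth bump function $\chi$ that is identically $1$ near $\operatorname{supp}T$, and use the duality $(d_\alpha T)(\chi)=-T(d_\alpha\chi)=0$. The paper's only variation is to take $\chi\equiv 1$ on all of $\Omega$ with support in a larger $\Omega'$, whereas you take $\chi\in C_0^\infty(\Omega)$ equal to $1$ on a neighborhood of $\operatorname{supp}T$; your extra verification that $\operatorname{supp}(d_\alpha T)\subseteq\operatorname{supp}T$ simply makes explicit a step the paper leaves implicit.
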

\begin{proof}Let $\chi\geq0$ be a smooth function equal to $1$ on $\Omega$ with compact support in $\Omega'\supseteq\Omega$. Since $d_\alpha T$ is positive, we write $d_\alpha T=\mu\Omega_{2n}$ for some measure $\mu$. Then
\begin{equation*}0\leq\int_\Omega \mu=\int_{\Omega'}\chi ~\mu=(d_\alpha T)(\chi)= -T(d_\alpha\chi).\end{equation*}
Note that $\text{supp}~d_\alpha\chi\subseteq \Omega^c$ and $\text{supp}~T\subseteq\Omega$. So $ T(d_\alpha\chi)=0$. It follows that $\int_\Omega \mu=0.$
\end{proof}

\begin{pro}\label{p3.2}Let $u\in PSH(\Omega)$. Then $\triangle u$ is
a closed positive $2$-current.
\end{pro}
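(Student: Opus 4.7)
The plan is to define $\triangle u$ as a current via duality and then reduce positivity to the smooth case by regularization.

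First I would make the definition precise. For $u \in PSH(\Omega)$, $u$ is locally integrable (this is standard: subharmonicity on each right quaternionic line together with upper semicontinuity gives $u \in L^1_{loc}(\Omega)$). Hence $u$ defines a $0$-current, and by the generalized definition (\ref{3.8}),
\begin{equation*}
(\triangle u)(\eta) := \int_\Omega u \,\triangle\eta
\end{equation*}
makes sense for every test $(2n-2)$-form $\eta \in \mathcal{D}^{2n-2}(\Omega)$, since $\triangle\eta$ has smooth compactly supported coefficients.

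Closedness is then purely algebraic. By (\ref{2.271}) and (\ref{3.8}), for $\alpha=0,1$ and any test $(2n-3)$-form $\eta$,
\begin{equation*}
(d_\alpha \triangle u)(\eta) = -(\triangle u)(d_\alpha \eta) = -\int_\Omega u \,\triangle d_\alpha \eta.
\end{equation*}
But $\triangle d_\alpha = d_0 d_1 d_\alpha = 0$ by Proposition \ref{p1.1}(1),(2) (using $d_0 d_1 = -d_1 d_0$ together with $d_0^2 = d_1^2 = 0$), which gives $d_\alpha \triangle u = 0$ directly.

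For positivity I would use the standard regularization $u_\varepsilon := u * \chi_\varepsilon$ with $\chi_\varepsilon$ a smooth nonnegative radial mollifier. Since plurisubharmonicity means subharmonicity on each right quaternionic line and this property is preserved under convolution with a radial mollifier, $u_\varepsilon \in PSH \cap C^\infty(\Omega_\varepsilon)$ on the set $\Omega_\varepsilon$ of points with distance $>\varepsilon$ from $\partial\Omega$, and $u_\varepsilon \downarrow u$ as $\varepsilon \to 0^+$, so $u_\varepsilon \to u$ in $L^1_{loc}(\Omega)$. By Proposition \ref{t3.1}, $\triangle u_\varepsilon$ is a closed strongly positive $2$-form on $\Omega_\varepsilon$. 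Given any strongly positive test $(2n-2)$-form $\eta$ with support in $\Omega_\varepsilon$ for small $\varepsilon$, the wedge product $\triangle u_\varepsilon \wedge \eta$ is a positive $2n$-form by Proposition \ref{p3.12}(2),(3), so
\begin{equation*}
\int_\Omega u_\varepsilon \,\triangle\eta = \int_\Omega \triangle u_\varepsilon \wedge \eta \geq 0,
\end{equation*}
where the first equality follows from two applications of the Stokes-type formula in Lemma \ref{p3.9} (boundary terms vanish since $\eta$ is compactly supported). Letting $\varepsilon \to 0^+$ and using $u_\varepsilon \to u$ in $L^1_{loc}$ together with boundedness of $\triangle\eta$ on the support of $\eta$, the left-hand side tends to $(\triangle u)(\eta) \geq 0$. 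Thus $\triangle u$ is positive.

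The only step requiring care is the $PSH$-preservation under mollification in the quaternionic setting; this, however, reduces to the corresponding real-variable fact that convolution of a subharmonic function (on each right quaternionic line) with a radial mollifier remains subharmonic on that line and decreases to the original function — a routine verification parallel to the complex case. Once that is in hand, the remainder of the argument is a direct transcription of the Bedford--Taylor pattern using the algebraic identities already established for $d_0, d_1$ and $\triangle$.
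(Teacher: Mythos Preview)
Your proposal is correct and follows essentially the same regularization strategy as the paper for positivity. The one noteworthy difference is your closedness argument: you observe directly that $\triangle d_\alpha = d_0 d_1 d_\alpha = 0$ by Proposition~\ref{p1.1}(1),(2), so $(d_\alpha \triangle u)(\eta) = -\int_\Omega u\,\triangle d_\alpha\eta = 0$ without any approximation, whereas the paper instead passes closedness through the limit $(d_\alpha\triangle u)(\eta) = \lim_{\varepsilon\to 0}(d_\alpha\triangle u_\varepsilon)(\eta) = 0$ using Corollary~\ref{p2.33} for the smooth $u_\varepsilon$. Your route is slightly cleaner here since it decouples closedness from the regularization entirely; the paper's route has the minor advantage of treating closedness and positivity uniformly via the same approximation.
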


\begin{proof}If $u$ is smooth, $\triangle u$ is a closed positive
$2$-form by   Proposition \ref{t3.1}. When $u$ is not smooth, consider
$u_{\varepsilon}=u*\chi_{\varepsilon}$, where $\chi_{\varepsilon}$
is the standard smoothing kernel. Then $u_{\varepsilon}$ converges
decreasingly to $u$ as $\varepsilon\rightarrow0$ (cf. pp.9 in \cite{alesker4}, as in the complex situation, see pp.63 in \cite{klimek}). It suffices to
show that the coefficients $\triangle_{ij}u_{\varepsilon}\rightarrow
\triangle_{ij}u$ in the sense of weak*-convergence of distributions.
For $\varphi\in C_0^{\infty}(\Omega)$,
\begin{equation*}\begin{aligned}\int\triangle_{ij}u_{\varepsilon}\cdot\varphi
=\int u_{\varepsilon}\cdot\triangle_{ij}\varphi\rightarrow
\int u\cdot\triangle_{ij}\varphi=(\triangle_{ij}u)(\varphi)
\end{aligned}\end{equation*}as $\varepsilon\rightarrow0$, by using the
Lebesgue's dominated convergence theorem. It follows that currents
$\triangle u_{\varepsilon}$ converges to $\triangle u$, and so the
current $\triangle u$ is positive. For any test form $\eta$,
\begin{equation*}(d_\alpha\triangle u)(\eta)=-\triangle
u(d_\alpha\eta)=-\lim_{\varepsilon\rightarrow0}\triangle
u_\varepsilon(d_\alpha\eta)=\lim_{\varepsilon\rightarrow0}(d_\alpha\triangle
u_\varepsilon)(\eta)=0,\end{equation*}$\alpha=0,1,$ where the last identity
follows from Corollary \ref{p2.33}. Note that $u_\varepsilon$ is
smooth, $d_\alpha\triangle u_\varepsilon$ coincides with its usual
definition.\end{proof}

\subsection{Bedford-Taylor  theory in the quaternionic case}\par

We are going to define $\triangle u\wedge
T$ as a closed positive current so that it can be applied in the
context of   non-differentiable $PSH$ functions.
\begin{pro}\label{p3.6}Let $u$ be a smooth $PSH$ function on $\Omega$ and $T$ be a
closed positive $2k$-current. Then \begin{equation*}T(u\triangle
\eta)=T(\eta\wedge\triangle u),\end{equation*}for any   $(2n-2k-2)$-test
form $\eta$. This means that when
$u$ is smooth, $\triangle(uT)$ defined by (\ref{3.8}) coincides with
the product $\triangle u\wedge T$ in its usual sense.
\end{pro}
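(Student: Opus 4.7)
The plan is to reduce $T(u\triangle\eta)$ to $T(\triangle u\wedge\eta)$ by integrating by parts twice, each time exploiting the closedness of $T$ (i.e.\ $d_0T=d_1T=0$) together with the Leibniz rule from Proposition \ref{p1.1}(3). Since $\triangle u$ is a $2$-form and $\eta$ is a $(2n-2k-2)$-form, one has $\eta\wedge\triangle u=\triangle u\wedge\eta$, so it suffices to establish $T(u\triangle\eta)=T(\triangle u\wedge\eta)$.

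First, I would treat $u$ as a smooth $0$-form and compute $d_1(u\eta)=d_1u\wedge\eta+u\,d_1\eta$; since $u\eta$ has compact support in $\Omega$, the defining relation (\ref{2.271}) together with $d_1T=0$ gives $0=(d_1T)(u\eta)=-T(d_1u\wedge\eta)-T(u\,d_1\eta)$, hence
\begin{equation*}
T(d_1u\wedge\eta)=-T(u\,d_1\eta).
\end{equation*}
Replacing $\eta$ by the test form $d_0\eta$ in this identity, and using $d_1d_0=-d_0d_1$ from Proposition \ref{p1.1}(1), I obtain
\begin{equation*}
T(d_1u\wedge d_0\eta)=-T(u\,d_1d_0\eta)=T(u\,d_0d_1\eta)=T(u\triangle\eta).
\end{equation*}

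Next, I expand $d_0(d_1u\wedge\eta)=\triangle u\wedge\eta-d_1u\wedge d_0\eta$ by Proposition \ref{p1.1}(3) (noting $d_1u$ is a $1$-form, contributing the sign $-1$). Since $d_1u\wedge\eta$ is smooth with compact support, the closedness $d_0T=0$ gives $T(d_0(d_1u\wedge\eta))=-(d_0T)(d_1u\wedge\eta)=0$, whence
\begin{equation*}
T(\triangle u\wedge\eta)=T(d_1u\wedge d_0\eta).
\end{equation*}
Combining this with the previous displayed equation yields $T(\triangle u\wedge\eta)=T(u\triangle\eta)$, which is the claim.

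The conceptual content is simply "closedness lets one integrate by parts," so there is no analytic obstacle here (compactness of $\mathrm{supp}\,\eta$ and smoothness of $u$ ensure every test form we produce is genuinely compactly supported and smooth). The only thing requiring care is the sign bookkeeping: the graded Leibniz rule of Proposition \ref{p1.1}(3), the anticommutation $d_0d_1=-d_1d_0$, and the sign convention $(d_\alpha F)(\psi)=-F(d_\alpha\psi)$ must all be tracked so that the cross term $T(d_1u\wedge d_0\eta)$ is matched to both $T(u\triangle\eta)$ and $T(\triangle u\wedge\eta)$ with the correct signs.
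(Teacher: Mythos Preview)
Your proof is correct and follows essentially the same approach as the paper: both arguments use the Leibniz rule (Proposition \ref{p1.1}(3)) twice together with the closedness $d_0T=d_1T=0$ to move $d_0d_1$ from $\eta$ onto $u$. The only difference is organizational---the paper first records the pointwise identity $u\triangle\eta=d_0(ud_1\eta)+d_1(\eta\wedge d_0u)+\eta\wedge\triangle u$ and then applies $T$, whereas you work directly at the level of the current; also be aware that in your first displayed identity the symbol $\eta$ must be a generic $(2n-2k-1)$-test form (so that $(d_1T)(u\eta)$ is defined), which you then specialize to $d_0\eta$---this is implicit in your phrase ``replacing $\eta$ by $d_0\eta$'' but would read more cleanly with a separate letter for the generic form.
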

\begin{proof}By using Proposition \ref{p1.1}, we find that\begin{equation}\label{3.9}\begin{aligned}u\triangle\eta=&ud_0d_1\eta=d_0(ud_1\eta)-d_0u\wedge
d_1\eta=d_0(ud_1\eta)+d_1\eta\wedge d_0u\\=&d_0(ud_1\eta)+d_1(\eta\wedge
d_0u)-\eta\wedge d_1d_0u=d_0(ud_1\eta)+d_1(\eta\wedge
d_0u)+\eta \wedge\triangle u.
\end{aligned}\end{equation}Now let $T$ act on both sides of (\ref{3.9}). Since $T$ is closed, we have $T(d_0\widetilde{\eta})=T(d_1\widetilde{\eta})=0$ for any test $(2n-2k-1)$-form $\tilde{\eta}$ by definition of closedness. Then $T(u\triangle
\eta)=T(\eta\wedge\triangle u)$, and
\begin{equation*}\triangle(uT)(\eta)=uT(\triangle\eta)=T(u\triangle\eta)=T(\eta\wedge\triangle
u)=(\triangle u\wedge T)(\eta).
\end{equation*}The last identity follows from definition (\ref{2.27}).\end{proof}
Now, let $u$ be a locally bounded $PSH$ function and let $T$ be a closed
positive $2k$-current. We can write $T=\sum_IT_I\omega^I$ with measures $T_I$. We define the product of a locally bounded function $u$ and the $2k$-current $T$ to be $$uT:=\sum_IuT_I\omega^I.$$ Here $uT_I$ is also a measure. This definition coincides with (\ref{2.27}) when $u$ is smooth. Since the product of two measure does not make sense in general, we are not able to define
the wedge product of two positive currents in general. As in the case of the
complex Monge-Amp\`{e}re operator (cf. \cite{bed}), motivated by Proposition \ref{p3.6}, we
define\begin{equation}\label{3.110}\triangle u\wedge
T:=\triangle(uT),\end{equation}i.e., $(\triangle u\wedge
T)(\eta):=uT(\triangle\eta)$, where $\eta$ is a   test form.
\par

\begin{pro}\label{p3.7}Let $u$ be a locally bounded $PSH$ function and let $T$ be a closed
positive $2k$-current. Then $\triangle u\wedge T$ defined by (\ref{3.110}) is also a closed positive
current. Moreover, inductively,
\begin{equation}\label{3.111}\triangle
u_1\wedge\ldots\wedge\triangle u_p\wedge T:=\triangle(u_1\triangle
u_2\ldots\wedge\triangle u_p\wedge T)\end{equation} is a closed
positive current, when $u_1,\ldots,u_p\in PSH\cap
L_{loc}^\infty(\Omega)$.
\end{pro}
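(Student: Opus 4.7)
The plan is to follow the Bedford--Taylor strategy and reduce the problem to the smooth case via regularization, using that a closed positive current has measure coefficients (Proposition \ref{p3.13}) so the product $uT$ makes sense as a current.

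First I would verify closedness. For any test $(2n-2k-3)$-form $\eta$ and $\alpha=0,1$, by the definition \eqref{3.110} and the generalized definition \eqref{2.271}, \eqref{3.8},
\begin{equation*}
(d_\alpha \triangle(uT))(\eta) = -\triangle(uT)(d_\alpha \eta) = -uT(\triangle d_\alpha \eta) = 0,
\end{equation*}
because $\triangle d_\alpha = d_0 d_1 d_\alpha = \pm d_\alpha d_0 d_1$ and $d_0^2 = d_1^2 = 0$ by Proposition \ref{p1.1}, or more directly $d_\alpha \triangle = 0$ as in \eqref{1.2}. Hence $\triangle u \wedge T$ is closed.

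For positivity, regularize: let $u_\varepsilon = u * \chi_\varepsilon$ be the standard smoothing. Then $u_\varepsilon$ is smooth $PSH$ and decreases to $u$ pointwise; moreover $\{u_\varepsilon\}$ is locally uniformly bounded since $u$ is. By Proposition \ref{t3.1}, $\triangle u_\varepsilon$ is a smooth closed strongly positive $2$-form. The product $\triangle u_\varepsilon \wedge T$ in the usual sense of \eqref{2.27} is then a closed positive $(2k+2)$-current: for any $\eta \in C_0^\infty(\Omega, SP^{2n-2k-2}\mathbb{C}^{2n})$ we have $\triangle u_\varepsilon \wedge \eta \in C_0^\infty(\Omega, SP^{2n-2k}\mathbb{C}^{2n})$ by Proposition \ref{p3.12}(3), so $(\triangle u_\varepsilon \wedge T)(\eta) = T(\triangle u_\varepsilon \wedge \eta) \geq 0$; closedness follows from $T$ being closed and $\triangle u_\varepsilon$ being smooth and closed via the Leibniz rule \eqref{da}. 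By Proposition \ref{p3.6}, this smooth product coincides with $\triangle(u_\varepsilon T)$.

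Now pass to the limit. Writing $T = \sum_I T_I \omega^I$ with Radon measure coefficients (Proposition \ref{p3.13}), for each test form $\eta$ we have
\begin{equation*}
\triangle(u_\varepsilon T)(\eta) = u_\varepsilon T(\triangle \eta) = \sum_I \varepsilon_I \int_\Omega u_\varepsilon (\triangle \eta)_{\widehat I}\, dT_I,
\end{equation*}
and since $u_\varepsilon \to u$ pointwise with a common local bound and the coefficients of $\triangle \eta$ are bounded with compact support, Lebesgue's dominated convergence gives $u_\varepsilon T(\triangle \eta) \to uT(\triangle \eta) = \triangle(uT)(\eta)$. Thus $\triangle(u_\varepsilon T) \to \triangle u \wedge T$ in the weak sense of currents, and positivity passes to the limit. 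The main obstacle here is precisely justifying this convergence, which hinges on knowing $T$ has measure coefficients so that products with bounded functions behave as expected.

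For the inductive step, assume $S := \triangle u_2 \wedge \ldots \wedge \triangle u_p \wedge T$ has already been shown to be a closed positive $(2k+2p-2)$-current. Then the case just proved applied to the $PSH$ function $u_1$ and the closed positive current $S$ shows that
\begin{equation*}
\triangle u_1 \wedge S = \triangle(u_1 S) = \triangle u_1 \wedge \triangle u_2 \wedge \ldots \wedge \triangle u_p \wedge T
\end{equation*}
is a closed positive current, completing the induction. (The base case $p=0$ is the hypothesis that $T$ itself is closed positive, and the case $p=1$ is the first part of the statement.)
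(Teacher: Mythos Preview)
Your proof is correct and follows essentially the same Bedford--Taylor strategy as the paper: closedness from $\triangle d_\alpha=0$, positivity by regularizing $u$ to smooth $u_\varepsilon$, identifying $\triangle u_\varepsilon\wedge T$ with $\triangle(u_\varepsilon T)$ via Proposition~\ref{p3.6}, and passing to the limit by dominated convergence against the measure coefficients of $T$. The inductive step is identical to the paper's.
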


\begin{proof}  Since $u$ is a locally bounded function and $T$ has
measure coefficients, the current $uT$ is well defined and has
measure coefficients. Therefore $\triangle u\wedge T$ defined by the
formula (\ref{3.110}) is a $(2k+2)$-current and is closed. It suffices to show
the positivity. Let $\eta$ be a strongly positive $(2n-2k-2)$-test form. Let $G\supseteq \text{supp}~\eta$ be a relatively
compact subset of $\Omega$. As before we can use convolution with a family of
regularizing kernels to find a decreasing sequence of smooth
$PSH$ functions $u_m$ converging pointwisely to $u$ as
$m\rightarrow\infty$. Then $u\leq u_m\leq u_1$ and Lebesgue's
dominated convergence theorem shows that $u_{m}T$ converges weakly
to $uT$. $$u_mT=\sum_Iu_mT_I\omega^I\rightarrow\sum_IuT_I\omega^I=uT.$$
Since $u_m$ is smooth and $T$ is closed, $\triangle(u_mT)$
coincides with the product $\triangle u_m\wedge T$ in its usual
sense by Proposition \ref{p3.6}.\par For each $m$, by Proposition
\ref{t3.1}, $\triangle u_{m}$ is a strongly positive $2$-form, and so $\triangle u_{m}\wedge\eta$ is a strongly
positive test $(2n-2k)$-form. Since $T$ is positive, $T(\triangle
u_{m}\wedge\eta)\geq0$. Therefore we have
\begin{equation}\label{3.2}(\triangle u\wedge T)(\eta):=(uT)(\triangle \eta)
=\lim_{m\rightarrow\infty}(u_{m}T)(\triangle
\eta)=\lim_{m\rightarrow\infty}T(u_{m}\triangle\eta)=\lim_{m\rightarrow\infty}T(\triangle u_{m}\wedge\eta)\geq0,
\end{equation}where the last identity follows from Proposition \ref{p3.6}. The proposition is proved.
\end{proof}
In particular, for $u_1,\ldots,u_n\in PSH\cap
L_{loc}^{\infty} (\Omega)$, $\triangle u_1\wedge\ldots\wedge\triangle u_n=\mu\Omega_{2n}$ for a well defined positive Radon
measure $\mu$. For any test $(2n -2p)$-form $\psi$ on $\Omega$, (\ref{3.111}) can be rewritten as
\begin{equation}\label{3.24}\int_\Omega\triangle
u_1\wedge\ldots\wedge\triangle u_p \wedge\psi=\int_\Omega u_1\triangle
u_2\ldots\wedge\triangle u_p \wedge\triangle\psi\end{equation}by (\ref{3.92}), where $u_1,\ldots,u_p\in PSH\cap
L_{loc}^\infty(\Omega)$. Since closed positive currents have measure coefficients,   (\ref{3.24}) also holds for smooth $\psi$   vanishing on the boundary. This fact will be  important later.

Alesker gave a quaternionic version of Chern-Levine-Nirenberg estimate in Proposition 6.3 \cite{alesker2}.   Let $T$ be a positive $(2n-2p)$-current,
$K$ be an arbitrary compact subset. Define
\begin{equation}\label{3.12}\|T\|_K:=\int_K T\wedge\beta_n^p,\end{equation}where
$\beta_n$ is defined by (\ref{bn}). In
particular, if $T$ is a positive $2n$-current,
$\|T\|_K$ coincides with $\int_KT$ defined by
(\ref{2.273}).\par

\begin{pro}\label{p3.88}Let
$\Omega$ be a domain in $\mathbb{H}^n$. Let $K,L$ be compact subsets
of $\Omega$ such that $L$ is contained in the interior of $K$. Then
there exists a constant $C$ depending only on $K,L$ such that for
any $u_1,\ldots u_k\in PSH\cap C^2(\Omega)$, one
has
\begin{equation}\label{3.14}\|\triangle
u_1\wedge\ldots\wedge\triangle u_k \|_{L}\leq
C\prod_{i=1}^k\|u_i\|_{L^{\infty}(K)} ,
\end{equation}where $\|\cdot\|_{L}$ is defined by
$(\ref{3.12})$. \end{pro}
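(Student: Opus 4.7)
The plan is to prove the estimate by induction on $k$, adapting the classical Chern--Levine--Nirenberg argument to the quaternionic framework. The base case $k=0$ reduces to $\|1\|_L = \int_L \beta_n^n = n!\,\mathrm{Vol}(L)$, which is finite. For the inductive step, set $T := \triangle u_2 \wedge \cdots \wedge \triangle u_k \wedge \beta_n^{n-k}$; by Corollary \ref{p2.33} (together with the fact that $\beta_n = \frac{1}{8}\triangle\|q\|^2$, so $\beta_n^{n-k}$ is a wedge of $\triangle$'s of smooth functions), $T$ is a closed strongly positive $(2n-2)$-form. Pick a cutoff $\chi \in C_0^\infty(\Omega)$ with $\chi \equiv 1$ on $L$, $0 \leq \chi \leq 1$, and $\mathrm{supp}\,\chi \subseteq \mathrm{int}(K)$. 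Since $\triangle u_1 \wedge T$ is a positive $2n$-form and $\chi \geq 0$ equals $1$ on $L$,
\begin{equation*}
\|\triangle u_1 \wedge \cdots \wedge \triangle u_k\|_L = \int_L \triangle u_1 \wedge T \leq \int_\Omega \chi\, \triangle u_1 \wedge T.
\end{equation*}

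Next, normalize $u_1$ by setting $\tilde u_1 := u_1 - \sup_K u_1 \leq 0$ on $K$, so that $\triangle \tilde u_1 = \triangle u_1$ and $|\tilde u_1| \leq 2\|u_1\|_{L^\infty(K)}$ on $\mathrm{supp}\,\chi$. Because $T$ is closed and smooth, two successive applications of the Stokes-type formula (Lemma \ref{p3.9}) move both $d_0$ and $d_1$ from $u_1$ onto $\chi$, yielding
\begin{equation*}
\int_\Omega \chi\, \triangle u_1 \wedge T = \int_\Omega \tilde u_1\, \triangle \chi \wedge T.
\end{equation*}
The difficulty is that $\triangle \chi$ itself is not positive. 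To remedy this, choose $C>0$ (depending only on $\|\chi\|_{C^2}$, hence only on $K$ and $L$) large enough that $v := \chi + C\|q\|^2/8$ is $PSH$ on a neighborhood of $\mathrm{supp}\,\chi$; Proposition \ref{t3.1} then guarantees that $\triangle v = \triangle \chi + C\beta_n$ is strongly positive. Splitting $\triangle \chi = (\triangle \chi + C\beta_n) - C\beta_n$ and using $\tilde u_1 \leq 0$ together with the positivity of $(\triangle \chi + C\beta_n)\wedge T$ and of $\beta_n \wedge T$, the first piece contributes a nonpositive integral, whence
\begin{equation*}
\int_\Omega \tilde u_1\, \triangle \chi \wedge T \leq C\int_\Omega (-\tilde u_1)\, \beta_n \wedge T \leq 2C\|u_1\|_{L^\infty(K)}\,\|\triangle u_2 \wedge \cdots \wedge \triangle u_k\|_{\mathrm{supp}\,\chi},
\end{equation*}
the last factor understood via (\ref{3.12}).

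To close the induction, interpose a compact set $K'$ with $\mathrm{supp}\,\chi \subseteq \mathrm{int}(K') \subseteq K' \subseteq K$ and apply the inductive hypothesis at level $k-1$ to bound $\|\triangle u_2 \wedge \cdots \wedge \triangle u_k\|_{\mathrm{supp}\,\chi}$ by $C'\prod_{i=2}^k \|u_i\|_{L^\infty(K')} \leq C'\prod_{i=2}^k \|u_i\|_{L^\infty(K)}$. Combining yields the estimate with constant $2CC'$. The main obstacle is the construction of $C$ in the previous paragraph: one must argue that strong positivity of a $2$-form $\triangle v$ corresponds to positive semidefiniteness of the quaternionic Hessian of $v$ (implicit in the proof of Proposition \ref{t3.1} via (\ref{A*A})), and that the shift by $C\,\triangle(\|q\|^2/8) = C\beta_n$ contributes a positive multiple of the identity that dominates the bounded quaternionic Hessian of the smooth compactly supported $\chi$ once $C$ is large enough. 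Apart from this linear-algebraic point, everything is a mechanical transfer of the complex argument through the dictionary $d_0,d_1 \leftrightarrow \partial,\overline{\partial}$ and $\beta_n \leftrightarrow \omega$.
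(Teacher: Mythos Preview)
Your argument is correct and follows essentially the same Chern--Levine--Nirenberg scheme as the paper: cutoff, integrate by parts via (\ref{3.24}) (which is exactly your ``two successive applications of Lemma \ref{p3.9}'' using that $T$ is closed), and then dominate $\triangle\chi$ by a multiple of $\beta_n$. The only difference is cosmetic: where you normalize $u_1$ to be nonpositive and make $\chi+C\|q\|^2/8$ plurisubharmonic so that $\triangle\chi+C\beta_n$ is strongly positive, the paper instead invokes the linear-algebraic Lemma \ref{l4.1} (that $\beta_n\pm\varepsilon\eta$ is positive for any real $2$-form $\eta$ with $\|\eta\|\le 1$) to bound $\pm\triangle\chi$ directly by $\varepsilon^{-1}\|\triangle\chi\|\,\beta_n$, avoiding the sign normalization. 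Your ``main obstacle'' is handled in the paper by that lemma rather than by the Hessian argument you sketch; both routes are valid and equivalent in strength.
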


\begin{proof} By Proposition \ref{t3.1}, $\triangle
u_1\wedge\ldots\wedge\triangle u_k $ is already   closed and strongly positive.   Since $L$ is compact, there is a covering of $L$ by a family of balls $B_j'\Subset B_j\subseteq K$. Let $\chi\geq0$ be a smooth function equals to 1 on $\overline{B_j}'$ with support in $B_j$. Then
 \begin{equation*}\begin{aligned}\|\triangle
u_1 \wedge\ldots\wedge\triangle u_k\|_{L\cap\overline{B_j}'}=&
\int_{\overline{B_j}'}\triangle
u_1 \wedge\ldots\wedge\triangle u_k\wedge\beta_n^{n-k}\leq \int_{B_j}\chi\triangle
u_1\wedge \triangle
u_2\wedge\ldots\wedge\triangle u_k\wedge\beta_n^{n-k}\\=&\int_{B_j}u_1\triangle\chi
\wedge\triangle
u_2\wedge\ldots\wedge\triangle u_k\wedge\beta_n^{n-k}\\\leq& \frac 1\varepsilon
 \|u_1\|_{L^{\infty}(K)}\|\triangle\chi \| \int_{B_j}
 \triangle
u_2\wedge\ldots\wedge\triangle u_k\wedge\beta_n^{n-k+1},
\end{aligned}\end{equation*}by using (\ref{3.24}) and the following Lemma \ref{l4.1}. The result follows.
\end{proof}

\begin{lem}\label{l4.1}For  $\eta\in\wedge^{2k}_{\mathbb{R}}\mathbb{C}^{2n}$ with $\|\eta\|\leq 1$, $\beta_n^k\pm\varepsilon\eta$ is a positive $2k$-form for some sufficiently small
$\varepsilon>0$.
\end{lem}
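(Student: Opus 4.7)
\textbf{Proof plan for Lemma \ref{l4.1}.} The lemma asserts that $\beta_n^k$ lies in the interior of the closed convex cone $\wedge^{2k}_{\mathbb{R}+}\mathbb{C}^{2n}$ inside the finite-dimensional real vector space $\wedge^{2k}_{\mathbb{R}}\mathbb{C}^{2n}$. The plan is to establish a uniform positive lower bound on $\beta_n^k\wedge\xi$ as $\xi$ ranges over a compact family of elementary strongly positive $(2n-2k)$-forms, exploiting the quaternionic-unitary invariances $A.\beta_n=\beta_n$ recorded in (\ref{2.35}) and $A.\Omega_{2n}=\Omega_{2n}$ recorded in (\ref{2.38}).

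By definition, $\beta_n^k\pm\varepsilon\eta\in\wedge^{2k}_{\mathbb{R}+}\mathbb{C}^{2n}$ iff $(\beta_n^k\pm\varepsilon\eta)\wedge\xi$ is a non-negative multiple of $\Omega_{2n}$ for every elementary strongly positive $(2n-2k)$-form $\xi=\eta_1^*\widetilde\omega^0\wedge\eta_1^*\widetilde\omega^1\wedge\cdots\wedge\eta_{n-k}^*\widetilde\omega^0\wedge\eta_{n-k}^*\widetilde\omega^1$. The orthogonalization-normalization argument recorded in the proof of Proposition \ref{p3.11} shows that every such $\xi$ is a non-negative real multiple of a normalized $\xi'$ in which $\{\eta_1,\ldots,\eta_{n-k}\}$ is unitary orthonormal in $(\mathbb{H}^n)^*$; extending that frame to a full unitary basis produces $A\in U_{\mathbb{H}}(n)$ with $\xi'=A.\xi_0$, where $\xi_0:=\omega^{2k}\wedge\omega^{2k+1}\wedge\cdots\wedge\omega^{2n-1}$. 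Hence it suffices to verify the positivity inequality on the compact set $\mathcal{E}:=\{A.\xi_0:A\in U_{\mathbb{H}}(n)\}$.

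Expanding $\beta_n^k=k!\sum_{0\le l_1<\cdots<l_k\le n-1}\omega^{2l_1}\wedge\omega^{2l_1+1}\wedge\cdots\wedge\omega^{2l_k}\wedge\omega^{2l_k+1}$ and wedging with $\xi_0$, only the single term with $\{l_1,\ldots,l_k\}=\{0,\ldots,k-1\}$ contributes, giving $\beta_n^k\wedge\xi_0=k!\,\Omega_{2n}$. Applying the two invariances, for every $\xi=A.\xi_0\in\mathcal{E}$ we obtain
\begin{equation*}
\beta_n^k\wedge\xi=A.\bigl(\beta_n^k\wedge\xi_0\bigr)=A.(k!\,\Omega_{2n})=k!\,\Omega_{2n}.
\end{equation*}
For an arbitrary real $\eta\in\wedge^{2k}_{\mathbb{R}}\mathbb{C}^{2n}$ with $\|\eta\|\le1$ write $\eta\wedge\xi=c_\eta(\xi)\,\Omega_{2n}$; the coefficient $c_\eta(\xi)$ is real and jointly continuous in $(\eta,\xi)$, so by compactness of $\mathcal{E}$ and of the unit ball in $\wedge^{2k}_{\mathbb{R}}\mathbb{C}^{2n}$ the supremum $M:=\sup\{|c_\eta(\xi)|:\|\eta\|\le1,\ \xi\in\mathcal{E}\}$ is finite. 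Choosing any $\varepsilon<k!/M$ then yields $(\beta_n^k\pm\varepsilon\eta)\wedge\xi=(k!\pm\varepsilon c_\eta(\xi))\,\Omega_{2n}\ge0$ on $\mathcal{E}$, and hence on every elementary strongly positive $\xi$ by non-negative scaling, proving the lemma.

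The main obstacle is the reduction in the second paragraph: realizing $\mathcal{E}$ as (a subset of) a single $U_{\mathbb{H}}(n)$-orbit. It requires a quaternionic Gram--Schmidt procedure to extend any orthonormal $(n-k)$-frame in $(\mathbb{H}^n)^*$ to a full unitary basis, combined with the verification from Proposition \ref{p3.11} that the orthogonalization step leaves the elementary strongly positive form unchanged while the normalization step only scales it by a positive real factor. Once this is in place, the unitary invariances from (\ref{2.35}) and (\ref{2.38}) collapse everything to the one-line computation $\beta_n^k\wedge\xi_0=k!\,\Omega_{2n}$.
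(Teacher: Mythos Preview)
Your argument is correct and is essentially the dual of the paper's own proof. The paper invokes the pullback criterion of Proposition~\ref{p3.11}: writing any right $\mathbb{H}$-linear $\sigma:\mathbb{H}^k\to\mathbb{H}^n$ as $L\circ\pi$ with $L\in U_{\mathbb{H}}(n)$ and $\pi$ the standard embedding, it computes $\sigma^*\beta_n^k=k!\,\Omega_{2k}$ from the invariance (\ref{2.35}) and then observes that $\sigma^*\eta=\alpha\,\Omega_{2k}$ with $|\alpha|$ bounded. You instead test directly against elementary strongly positive $(2n-2k)$-forms and reduce to the $U_{\mathbb{H}}(n)$-orbit of $\xi_0=\omega^{2k}\wedge\cdots\wedge\omega^{2n-1}$, obtaining the same constant $k!$ from the same invariances (\ref{2.35})--(\ref{2.38}). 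The two reductions are in bijection via Proposition~\ref{p3.11} (your $\xi$ corresponds to the paper's $\sigma$), so the underlying idea is identical; your write-up has the minor advantage of making the compactness step for the uniform choice of $\varepsilon$ explicit, which the paper leaves implicit.
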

\begin{proof} Any fixed right
$\mathbb{H}$-linear map $\sigma:\mathbb{H}^k\rightarrow\mathbb{H}^n$  can be written as $\sigma=L\circ\pi$, where $L$ is a
unitary isomorphism of $\mathbb{H}^n$ and
$\pi:\mathbb{H}^k\rightarrow\mathbb{H}^n,~\pi(q_1,\ldots,q_k)=(q_1,\ldots,q_k,0,\ldots,0)$.
By (\ref{2.35}), $\beta_n$ is invariant under $L $.
So $\sigma^*\beta_n=\pi^*\beta_n=\beta_k$, thus $\sigma^*\beta_n^k=\beta_k^k=k!~\Omega_{2k}$ is a positive $2k$-form in $\mathbb{H}^k$.
\par Note that $\sigma^*\eta$ is also a real $2k$-form in
$\mathbb{H}^k$, we can write
$\sigma^*\eta=\alpha\Omega_{2k}$, for
some $\alpha\in \mathbb{R}$. Therefore
$\sigma^*(\beta_n^k\pm\varepsilon\eta)=(k!\pm\varepsilon\alpha)\Omega_{2k}$ and the coefficient is positive if $\varepsilon$ is taken
sufficiently small. By Proposition \ref{p3.11}, $\beta_n^k\pm\varepsilon\eta$ is positive.
\end{proof}

Now we are going to show that $\triangle
v_1\wedge\ldots\wedge\triangle v_k $ is continuous in decreasing
sequences when $v_1,\ldots,v_k$ are locally bounded $PSH$ functions. This conclusion in the complex case is
due to Bedford and Taylor  \cite{bedford}.

\begin{thm}\label{t3.2}Let
$v^1,\ldots,v^k\in PSH\cap L_{loc}^{\infty}(\Omega)$. Let
$\{v_j^1\}_{j\in\mathbb{N}},\ldots,\{v_j^k\}_{j\in\mathbb{N}}$ be
decreasing sequences of $PSH$ functions in $\Omega$ such
that $\lim_{j\rightarrow\infty}v_j^t=v^t$   pointwisely in $\Omega$
for each $t $. Then the currents $\triangle
v_j^1\wedge\ldots\wedge\triangle v_j^k $ converge weakly to
$\triangle v^1\wedge\ldots\wedge\triangle v^k $ as
$j\rightarrow\infty$.
\end{thm}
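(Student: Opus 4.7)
The plan is to induct on $k$. The decreasing PSH sequences $\{v_j^t\}$ are uniformly locally bounded (they are sandwiched between $v^t\in L^\infty_{\mathrm{loc}}$ and the u.s.c.\ function $v_1^t$), hence converge to $v^t$ in $L^1_{\mathrm{loc}}$ by monotone convergence. The base case $k=1$ is then immediate: $(\triangle v_j^1)(\eta)=\int v_j^1\,\triangle\eta \to \int v^1\,\triangle\eta=(\triangle v^1)(\eta)$ for every test form $\eta$, by dominated convergence on the compact support of $\triangle\eta$.

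\textbf{Inductive step.} Assume the theorem with $k-1$ in place of $k$. Set $S_j:=\triangle v_j^2\wedge\cdots\wedge\triangle v_j^k$ and $S:=\triangle v^2\wedge\cdots\wedge\triangle v^k$; by Proposition \ref{p3.7} both are closed positive $(2k-2)$-currents, and the inductive hypothesis gives $S_j\to S$ weakly. The identity $\triangle v_j^1\wedge S_j=\triangle(v_j^1 S_j)$ from (\ref{3.110}), combined with the weak continuity of $\triangle$ (it is defined by duality on test forms), reduces the theorem to the Main Lemma: $v_j^1 S_j\to v^1 S$ weakly as $(2k-2)$-currents.

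\textbf{Main Lemma.} I would decompose
\begin{equation*}
v_j^1 S_j - v^1 S \;=\; (v_j^1-v^1)\,S \;+\; v_j^1(S_j-S).
\end{equation*}
The first summand is harmless: tested against a strongly positive form $\psi$, the integrand $(v_j^1-v^1)\,\psi\wedge S$ is nonnegative against the finite positive Radon measure $\psi\wedge S$ on $\mathrm{supp}\,\psi$, and $v_j^1-v^1\downarrow 0$ pointwise with uniform bound, so monotone convergence kills it. General real test forms reduce to strongly positive ones via Proposition \ref{p3.12}. For the second summand, regularize $v^1$ by quaternionic mollification $w_\varepsilon:=v^1*\chi_\varepsilon$, which is smooth and PSH on a slightly smaller subdomain with $w_\varepsilon\downarrow v^1$ as $\varepsilon\to 0^+$. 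Splitting once more,
\begin{equation*}
v_j^1(S_j-S) \;=\; (v_j^1-w_\varepsilon)(S_j-S) \;+\; w_\varepsilon(S_j-S),
\end{equation*}
the second piece vanishes as $j\to\infty$ for each fixed $\varepsilon$ because $w_\varepsilon\psi$ is a genuine smooth test form.

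\textbf{Main obstacle.} The hard part is the remaining cross term $(v_j^1-w_\varepsilon)(S_j-S)$, where both the function and the current are non-smooth and vary with $j$. I would control it by (i) the quaternionic Chern-Levine-Nirenberg bound (Proposition \ref{p3.88}) together with the decreasing approximation argument of Proposition \ref{p3.7} to get uniform mass bounds $\|S_j\|_{\mathrm{supp}\,\psi}\leq C$, and (ii) an upper-semicontinuity/Portmanteau argument: since $v_j^1-w_\varepsilon$ is u.s.c.\ and bounded, decreasing pointwise to $v^1-w_\varepsilon\leq 0$, and since the positive measures $\psi\wedge S_j$ converge weakly to $\psi\wedge S$ with uniformly bounded mass, one obtains
\begin{equation*}
\limsup_{j\to\infty}\int (v_j^1-w_\varepsilon)\,\psi\wedge S_j \;\leq\; \int (v^1-w_\varepsilon)\,\psi\wedge S,
\end{equation*}
and the analogous lower bound for the $\liminf$; letting $\varepsilon\to 0$ and applying monotone convergence on the fixed positive measure $\psi\wedge S$ then closes the argument. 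As in the classical complex Bedford-Taylor theorem, the delicate point is that one cannot use only $L^1_{\mathrm{loc}}$ convergence of $v_j^1$ — it is essential to exploit the pointwise upper-semicontinuous behavior of the decreasing PSH sequence in tandem with the weak convergence of the currents.
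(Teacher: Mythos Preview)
Your sketch has a genuine gap at the step you yourself flag as the ``main obstacle.'' For the cross term $(v_j^1-w_\varepsilon)(S_j-S)$ you invoke a Portmanteau/Demailly-type inequality to get
\[
\limsup_{j\to\infty}\int (v_j^1-w_\varepsilon)\,\psi\wedge S_j \;\leq\; \int (v^1-w_\varepsilon)\,\psi\wedge S,
\]
and then assert ``the analogous lower bound for the $\liminf$.'' But for a \emph{decreasing} sequence of u.s.c.\ functions against weakly convergent positive measures, Lemma~\ref{l4.2} is intrinsically one-sided: it yields only $\nu\leq f\mu$ for any weak limit $\nu$ of $f_j\mu_j$. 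There is no symmetric $\liminf$-inequality in this setting; replacing $v_j^1-w_\varepsilon$ by its negative gives an \emph{increasing} sequence, to which the lemma does not apply. This is exactly the well-known asymmetry in the Bedford--Taylor convergence theorem: the $\limsup$ direction follows from soft measure theory, while the $\liminf$ direction does not.

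The paper obtains the $\liminf$ inequality by a different mechanism. One first localizes to a ball $B$ and replaces $v_j^t$ by $\max\{\rho,v_j^t\}$ (with $\rho(q)=\|q-a\|^2-r^2$) so that all the sequences coincide near $\partial B$. One then tests against forms $\psi$ that vanish on $\partial B$ and have $\triangle\psi$ positive (a general real test form is recovered as $\psi=\rho(\triangle\rho)^{n-k}+\varepsilon\widetilde{\psi}$). On such $\psi$, the integration-by-parts identity~(\ref{3.24}) lets one cycle through the factors: use $v^k\leq v_j^k$ to swap $v^k$ for $v_j^k$, integrate by parts to shift $\triangle$ to the next slot (boundary terms vanish because the functions agree near $\partial B$ and $\psi|_{\partial B}=0$), then swap $v^{k-1}$ for $v_j^{k-1}$, and so on. This yields the pointwise inequality
\[
\int_B v^k\,\triangle v^1\wedge\cdots\wedge\triangle v^{k-1}\wedge\triangle\psi\;\leq\;\int_B v_j^k\,\triangle v_j^1\wedge\cdots\wedge\triangle v_j^{k-1}\wedge\triangle\psi
\]
for every $j$, from which the $\liminf$ bound is immediate. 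Your decomposition never produces such an inequality, and the missing lower bound cannot be manufactured from the CLN mass estimate and upper-semicontinuity alone.
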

We need the following lemmas to prove the theorem.

\begin{lem}\label{l4.2}(Lemma 1.9 in \cite{demailly}) Let $f_j$ be a decreasing sequence of upper
semi-continuous functions converging to $f$ on some separable
locally compact space $X$ and $\mu_j$ a sequence of positive
measures converging weakly to $\mu$ on $X$. Then every weak limit
$\nu$ of $f_j\mu_j$ satisfies $\nu\leq f\mu$.
\end{lem}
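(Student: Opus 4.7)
The plan is to test the desired inequality $\nu \leq f\mu$ against an arbitrary nonnegative $\varphi \in C_c(X)$ and verify $\int \varphi\, d\nu \leq \int \varphi f\, d\mu$; by the Riesz representation theorem on a locally compact separable space, this is enough to conclude $\nu \leq f\mu$ as signed Radon measures. The strategy is to sandwich $f_j$ between itself and a continuous function so that weak convergence of $\mu_j$ can be invoked on a genuinely continuous integrand.

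First I would use the standard fact that any upper semi-continuous function which is locally bounded above is the pointwise decreasing limit of a sequence of continuous functions (for instance, via the Moreau--Yosida regularization $g_k^m(x):=\inf_y\{f_k(y)+m\,d(x,y)\}$ on a metric neighborhood of $\mathrm{supp}\,\varphi$). Fix $k$, choose $g_k^m\downarrow f_k$ with $g_k^m$ continuous near $\mathrm{supp}\,\varphi$, and observe that since $\{f_j\}$ is decreasing, for every $j\geq k$ one has $f_j\leq f_k\leq g_k^m$, whence $\varphi\geq 0$ yields
\[
\int_X \varphi f_j\, d\mu_j \;\leq\; \int_X \varphi g_k^m\, d\mu_j.
\]
Passing $j\to\infty$ along the subsequence that produces $\nu$ as the weak limit of $f_j\mu_j$, the left side converges to $\int_X \varphi\,d\nu$ by definition of weak convergence applied to the test function $\varphi$, while the right side converges to $\int_X \varphi g_k^m\,d\mu$ because $\varphi g_k^m$ is continuous and compactly supported and $\mu_j\to\mu$ weakly; this gives $\int \varphi\,d\nu \leq \int \varphi g_k^m\,d\mu$.

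Next I would let $m\to\infty$ and then $k\to\infty$. Since $g_k^1$ is continuous, $\varphi g_k^1\in L^1(d\mu)$, so the decreasing form of the monotone convergence theorem applied to $g_k^m\downarrow f_k$ yields $\int \varphi g_k^m\,d\mu \to \int \varphi f_k\,d\mu$, producing $\int\varphi\,d\nu \leq \int \varphi f_k\,d\mu$. A second application along $f_k\downarrow f$, justified by $f_1$ being u.s.c.\ and hence bounded above on $\mathrm{supp}\,\varphi$ (so $\varphi f_1$ has a $\mu$-integrable upper bound), gives $\int\varphi\,d\nu \leq \int\varphi f\,d\mu$, as required. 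The main technical obstacle is the potential unboundedness of $f_k$ and $f$ from below: the argument must allow $\int \varphi f_k\,d\mu$ and $\int \varphi f\,d\mu$ to take the value $-\infty$. This is harmless because only an \emph{upper} bound on the integrand is needed for the decreasing monotone convergence theorem and for the inequality to remain meaningful, and such an upper bound is furnished automatically by upper semi-continuity of $f_1$ and $g_k^1$ on the compact set $\mathrm{supp}\,\varphi$.
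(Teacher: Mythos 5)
Your argument is correct. Note first that the paper itself supplies no proof of this lemma: it is quoted verbatim as Lemma 1.9 of Demailly's survey and used as a black box, so there is nothing internal to compare against. Your proof is a complete, self-contained justification and in fact follows the standard (essentially Demailly's own) line: test against $0\leq\varphi\in C_c(X)$, dominate $f_j$ for $j\geq k$ by a continuous regularization $g_k^m\geq f_k$, pass to the limit in $j$ using weak convergence of $f_j\mu_j$ on the left and of $\mu_j$ against the continuous compactly supported function $\varphi g_k^m$ on the right, and then let $m\to\infty$ and $k\to\infty$ by the decreasing monotone convergence theorem, which only needs the integrable upper bounds $\varphi g_k^1$ and $\varphi f_1$ that you correctly identify. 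The one point worth flagging is your appeal to the Moreau--Yosida inf-convolution, which presupposes a metric, whereas the statement only assumes $X$ separable and locally compact; to be scrupulous you should either invoke the general fact that on a locally compact Hausdorff space an u.s.c.\ function bounded above on a compact set is the pointwise infimum of the continuous functions dominating it (extracting a decreasing sequence using separability), or simply observe that in every application in this paper $X$ is an open subset of $\mathbb{R}^{4n}$, where your construction applies verbatim. With that caveat the proof is sound, and it has the merit of making explicit why the limit measure can only drop (the inequality $f_j\leq g_k^m$ is one-sided), i.e.\ why one gets $\nu\leq f\mu$ rather than equality.
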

\begin{lem}\label{l4.3}Theorem \ref{t3.2} is true under the additional hypothesis that $\Omega$ is an open ball and the functions $v_j^t$ are smooth in $\Omega$ and coincide outside a compact subset of $\Omega$.
\end{lem}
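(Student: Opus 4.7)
The plan is to proceed by induction on $k$. The base case $k=1$ is straightforward: since $v_j^1 \downarrow v^1$ pointwise and the sequence is locally uniformly bounded (as $v^1$ is locally bounded), the dominated convergence theorem yields $v_j^1 \to v^1$ in $L^1_{\mathrm{loc}}(\Omega)$, so $\triangle v_j^1 \to \triangle v^1$ as distributions and hence weakly as currents.

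For the inductive step, assume the result for $k-1$ and set $T_j := \triangle v_j^2 \wedge \ldots \wedge \triangle v_j^k$ and $T := \triangle v^2 \wedge \ldots \wedge \triangle v^k$, so that $T_j \to T$ weakly by the inductive hypothesis. Since each $v_j^1$ is smooth, Proposition \ref{p3.6} gives $\triangle v_j^1 \wedge T_j = \triangle(v_j^1 T_j)$, while $\triangle v^1 \wedge T = \triangle(v^1 T)$ by the definition (\ref{3.110}). Because $\triangle$ is continuous on currents, the problem reduces to showing $v_j^1 T_j \to v^1 T$ weakly. Given a test form $\psi$, Proposition \ref{p3.12} expresses $\psi$ as a $\mathbb{C}$-linear combination of strongly positive smooth test forms, so we may assume $\psi$ itself is strongly positive. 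Then $T_j \wedge \psi$ and $T \wedge \psi$ are positive Radon measures supported in $\mathrm{supp}\,\psi$, with $T_j \wedge \psi$ converging weakly to $T \wedge \psi$, and the goal becomes $\int v_j^1 T_j \wedge \psi \to \int v^1 T \wedge \psi$.

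The upper bound follows from monotonicity: for any fixed $N$ and all $j \geq N$, $v_j^1 \leq v_N^1$, so positivity of $T_j \wedge \psi$ and smoothness of $v_N^1$ give
\[ \limsup_j \int v_j^1 T_j \wedge \psi \leq \lim_j \int v_N^1 T_j \wedge \psi = \int v_N^1 T \wedge \psi, \]
and letting $N \to \infty$ with the monotone convergence theorem for the finite Radon measure $T \wedge \psi$ yields the bound $\int v^1 T \wedge \psi$. For the matching lower bound use $v_j^1 \geq v^1$, so $\int v_j^1 T_j \wedge \psi \geq \int v^1 T_j \wedge \psi$, and decompose $v^1 = v^1_m - (v^1_m - v^1)$ where $v^1_m := v^1 * \chi_{1/m}$ is a smooth $PSH$ regularization with $v^1_m \downarrow v^1$ (available since $\Omega$ is a ball). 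The smooth term yields $\int v^1_m T_j \wedge \psi \to \int v^1_m T \wedge \psi$ by weak convergence, while Lemma \ref{l4.2} applied to the constant upper semi-continuous sequence $v^1_m - v^1 \geq 0$ and the weakly convergent positive measures $T_j \wedge \psi$ gives $\limsup_j \int (v^1_m - v^1) T_j \wedge \psi \leq \int (v^1_m - v^1) T \wedge \psi$ (the compact support of the measures converts the weak-limit bound into a bound on total mass). Combining,
\[ \liminf_j \int v^1 T_j \wedge \psi \geq \int v^1_m T \wedge \psi - \int (v^1_m - v^1) T \wedge \psi = \int v^1 T \wedge \psi, \]
which together with the upper bound closes the inductive step. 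The main obstacle is the lower bound: weak convergence alone is too weak to integrate the merely upper semi-continuous function $v^1$ against $T_j \wedge \psi$, so one is forced to interpolate through a smooth $PSH$ approximant $v^1_m$ and use the one-sided Lemma \ref{l4.2} to control the error uniformly in $j$.
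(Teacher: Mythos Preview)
Your upper-bound argument is fine and is exactly Lemma~\ref{l4.2}. The gap is in the lower bound. You claim that $v^1_m - v^1$ is upper semi-continuous and then invoke Lemma~\ref{l4.2}, but this is false: $v^1$ is PSH, hence upper semi-continuous, and $v^1_m$ is continuous, so $v^1_m - v^1$ is \emph{lower} semi-continuous. For lsc nonnegative integrands, weak convergence of positive measures only gives a \emph{lower} bound $\liminf_j \int (v^1_m - v^1)\, T_j\wedge\psi \ge \int (v^1_m - v^1)\, T\wedge\psi$, which goes the wrong way for your purpose. A simple counterexample: let $\mu_j=\delta_{x_j}$ with $x_j\to x_0$, $\mu=\delta_{x_0}$, and take $g$ lsc with $g(x_j)=1$, $g(x_0)=0$; then $\int g\,d\mu_j=1$ while $\int g\,d\mu=0$. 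So your inequality $\limsup_j \int (v^1_m - v^1)\, T_j\wedge\psi \le \int (v^1_m - v^1)\, T\wedge\psi$ is unjustified and in general false.

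A tell-tale sign that something is off: your argument never uses the two extra hypotheses of the lemma, namely that the $v_j^t$ are smooth and that they all coincide outside a fixed compact subset of $\Omega$. If your proof were correct it would establish Theorem~\ref{t3.2} outright, making Lemmas~\ref{l4.3} and~\ref{l4.4} superfluous. In the paper's proof the lower bound is obtained not by a measure-theoretic lemma but by an integration-by-parts trick: one first reduces to test forms $\psi$ with $\psi=0$ on $\partial B$ and $\triangle\psi$ positive, then uses identity~(\ref{3.24}) repeatedly to swap the roles of $v^t$ and $v_j^t$ one at a time inside $\int_B v_j^k\,\triangle v^1\wedge\cdots\wedge\triangle v^{k-1}\wedge\triangle\psi$. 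Each swap uses $v^t\le v_j^t$ together with the fact that $v^t=v_j^t$ on $B\setminus\overline B_1$, so the boundary contributions vanish and the inequality is preserved. This is where both additional hypotheses are spent, and this step cannot be replaced by the soft measure-theoretic argument you attempt.
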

\begin{proof} We need to show that
for any test $(2n-2k)$-form $\psi$ on $\Omega$,
$$\lim_{j\rightarrow\infty}\int_\Omega\triangle
v_j^1\wedge\ldots\wedge\triangle v_j^k \wedge\psi=\int_\Omega\triangle
v^1\wedge\ldots\wedge\triangle v^k \wedge\psi.$$ Again by induction,
observing that the case $k=1$ is obvious. Assume that the result is true for
$k-1$. According to the inductive definition (\ref{3.111}), it is sufficient
to show
that\begin{equation}\label{3.7}\lim_{j\rightarrow\infty}\int_\Omega v_j^k\triangle
v_j^1\wedge\ldots\wedge\triangle
v_j^{k-1} \wedge\triangle\psi=\int_\Omega v^k\triangle
v^1\wedge\ldots\wedge\triangle v^{k-1} \wedge\triangle\psi
\end{equation}for all real test $(2n-2k)$-forms $\psi$ on $\Omega$ (cf. Remark \ref{rem:real}).\par
Since the results are purely local, it is enough to consider $\psi$   supported   in $B=B(a,r)\subseteq \overline{B}\subseteq\Omega$, where $B$ is such that all $v_j^t$ coincide in a neighborhood of $\partial B$. Furthermore, it is enough
to check (\ref{3.7}) for $\psi$ satisfying following properties:
$\triangle\psi$ is positive in $B$, $\psi$ is smooth in a
neighborhood of $B$, and $\psi=0$ in $\partial B$. Indeed, if
$\widetilde{\psi}$ is a real test $(2n-2k)$-form with compact support contained in $B$, define
$\psi=\rho(\triangle \rho)^{n-k}+\varepsilon\widetilde{\psi}$, where $\varepsilon>0$ and $\rho(q)=\|q-a\|^2-r^2$ for $q\in\mathbb{H}^n$. If
$\varepsilon$ is small enough,
$\triangle\psi=(\triangle\rho)^{n-k+1}+\varepsilon\triangle\widetilde{\psi}$ is
positive by Lemma \ref{l4.1} and the fact that $\triangle \rho=8\beta_n$. This $\psi$ satisfies the above
properties. If (\ref{3.7}) holds for $\psi$ and $\rho(\triangle \rho)^{n-k}$, then it also holds for $\widetilde{\psi}$, as required.

Let $B_1=B(a,r_1)$, $0<r_1<r$ and all the functions $v_j^t$ coincide in $B\backslash \overline{B_1}$. Since $\triangle\psi$ is a positive form   and
$\triangle v_j^1\wedge\ldots\wedge\triangle v_j^{k-1} $ are strongly positive
by Proposition \ref{t3.1}, $\triangle v_j^1\wedge\ldots\wedge\triangle
v_j^{k-1} \wedge\triangle\psi$ are positive by definition. Thus $\triangle v_j^1\wedge\ldots\wedge\triangle
v_j^{k-1} \wedge\triangle\psi=\mu_j\Omega_{2n}$ for some positive measure  $\mu_j$. By the inductive assumption, $\triangle
v_j^1\wedge\ldots\wedge\triangle v_j^{k-1} $ converges to $\triangle
v^1\wedge\ldots\wedge\triangle v^{k-1} $ weakly as
$j\rightarrow\infty$, and so $\mu_j$ converges weakly to $\mu $ as measures with $\triangle
v^1\wedge\ldots\wedge\triangle v^{k-1} \wedge\triangle\psi=\mu \Omega_{2n}$. Then apply Lemma
\ref{l4.2} to $f_j=v_j^k,f=v^k,\mu_j,\mu$ to get\begin{equation*}\lim_{j\rightarrow\infty}\int_B~v_j^k\triangle
v_j^1\wedge\ldots\wedge\triangle v_j^{k-1} \wedge\triangle\psi\leq
\int_B v^k\triangle v^1\wedge\ldots\wedge\triangle
v^{k-1} \wedge\triangle\psi.
\end{equation*}Now it suffices to show that
\begin{equation}\label{3.23}\int_B v^k\triangle v^1\wedge\ldots\wedge\triangle
v^{k-1} \wedge\triangle\psi\leq\liminf_{j\rightarrow\infty}\int_Bv_j^k\triangle
v_j^1\wedge\ldots\wedge\triangle v_j^{k-1} \wedge\triangle\psi.
\end{equation}
Since
$v^k\leq v^k_j$, we have\begin{equation}\label{3.10}\begin{aligned}&\int_B v^k\triangle v^1\wedge\ldots\wedge\triangle
v^{k-1} \wedge\triangle\psi=\int_{\overline{B}_1}+\int_{B\backslash\overline{B}_1}\\\leq &\int_{\overline{B}_1}v^{k}_j\triangle v^1\wedge\ldots\wedge\triangle
v^{k-1} \wedge\triangle\psi+\int_{B\backslash\overline{B}_1}v^k\triangle v^1\wedge\ldots\wedge\triangle
v^{k-1} \wedge\triangle\psi\\
=&\int_{B}v^{k}_j\triangle v^1\wedge\ldots\wedge\triangle
v^{k-1}  \wedge\triangle\psi+\int_{B\backslash\overline{B}_1}(v^k-v^{k}_j)\triangle v^1\wedge\ldots\wedge\triangle
v^{k-1} \wedge\triangle\psi.
\end{aligned}\end{equation}The last integral vanishes, since $v^k=v^k_j$ outside the set $\overline{B}_1$. By using (\ref{3.24}) repeatedly for $\psi$ vanishing on $\partial B$, the first integral is equal to
\begin{equation*}\begin{aligned}&\int_{B}\triangle v^1\wedge\ldots\wedge\triangle
v^{k-1}\wedge\triangle v^{k}_j  \wedge\psi=\int_{B}v^{k-1}\triangle v^1\wedge\ldots\wedge\triangle v^{k-2}
\wedge\triangle v^{k}_j \wedge\triangle\psi\\\leq&\int_{B}v^{k-1}_j\triangle v^1\wedge\ldots\wedge\triangle v^{k-2}
\wedge\triangle v^{k}_j \wedge\triangle\psi=\int_{B}\triangle v^{k-1}_j\wedge\triangle v^1\wedge\ldots\wedge\triangle v^{k-2}
\wedge\triangle v^{k}_j \wedge\psi\\=&\int_{B}v^{k}_j\triangle v^1\wedge\ldots\wedge\triangle v^{k-2}
\wedge\triangle v^{k-1}_j \wedge\triangle \psi.
\end{aligned}\end{equation*}Applying the same argument to $v^{k-2},\ldots,v^1$, we obtain
\begin{equation}\label{3.11}\begin{aligned}&\int_B v^k\triangle v^1\wedge\ldots\wedge\triangle
v^{k-1} \wedge\triangle\psi\leq\int_Bv_j^k\triangle
v_j^1\wedge\ldots\wedge\triangle v_j^{k-1} \wedge\triangle\psi.
\end{aligned}\end{equation}Now let
$j\rightarrow\infty$, we get (\ref{3.23}).
\end{proof}
\begin{lem}\label{l4.4}Theorem \ref{t3.2} is true under the additional hypothesis that $\Omega$ is an open ball and the functions $v_j^t$ coincide outside a compact subset of $\Omega$.
\end{lem}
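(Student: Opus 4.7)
The plan is to reduce Lemma \ref{l4.4} to the smooth case (Lemma \ref{l4.3}) via convolution and a double-limit interchange argument. For each $t,j$, set $v_{j,\varepsilon}^t := v_j^t \ast \chi_\varepsilon$, where $\chi_\varepsilon$ is the standard radial mollifier. Each $v_{j,\varepsilon}^t$ is a smooth PSH function on $\Omega_\varepsilon := \{q\in\Omega:\operatorname{dist}(q,\partial\Omega)>\varepsilon\}$, and it decreases to $v_j^t$ as $\varepsilon\downarrow 0$. Since by hypothesis all $v_j^t$ (for each fixed $t$) coincide outside a compact $K \Subset \Omega$, the convolutions $v_{j,\varepsilon}^t$ coincide outside an $\varepsilon$-enlargement $K_\varepsilon$, still compactly contained in $\Omega$ for all sufficiently small $\varepsilon$. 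Write $v_\varepsilon^t := v^t \ast \chi_\varepsilon$ and set
\[
T_{j,\varepsilon}:=\triangle v_{j,\varepsilon}^1\wedge\cdots\wedge\triangle v_{j,\varepsilon}^k,\qquad T_j:=\triangle v_j^1\wedge\cdots\wedge\triangle v_j^k,
\]
with $T_{\infty,\varepsilon}$ and $T_\infty$ defined analogously from $v_\varepsilon^t$ and $v^t$.

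Applying Lemma \ref{l4.3} three times yields: \emph{(a)} for each fixed $\varepsilon$, $T_{j,\varepsilon}\rightharpoonup T_{\infty,\varepsilon}$ as $j\to\infty$, since the smooth sequence $\{v_{j,\varepsilon}^t\}_j$ decreases to $v_\varepsilon^t$ with all members coinciding outside $K_\varepsilon$; \emph{(b)} for each fixed $j$, $T_{j,\varepsilon}\rightharpoonup T_j$ as $\varepsilon\to 0$, since the smooth family $\{v_{j,\varepsilon}^t\}_{\varepsilon}$ decreases to $v_j^t$ and all members coincide outside $K_{\varepsilon_0}$ for a fixed small $\varepsilon_0$; and \emph{(c)} $T_{\infty,\varepsilon}\rightharpoonup T_\infty$ as $\varepsilon\to 0$ analogously.

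The goal $T_j\rightharpoonup T_\infty$ now amounts to a double-limit interchange among (a), (b), (c). Fix a test form $\psi$ and $\delta>0$. Using (c), pick $\varepsilon^*$ with $|T_{\infty,\varepsilon^*}(\psi)-T_\infty(\psi)|<\delta/3$; using (a), pick $N$ with $|T_{j,\varepsilon^*}(\psi)-T_{\infty,\varepsilon^*}(\psi)|<\delta/3$ for all $j\geq N$. The main obstacle, and the crux of the proof, is bounding $|T_j(\psi)-T_{j,\varepsilon^*}(\psi)|<\delta/3$ uniformly in $j\geq N$, i.e.\ upgrading the pointwise-in-$j$ convergence (b) to uniformity in $j$. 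To handle this I would invoke the Chern--Levine--Nirenberg estimate (Proposition \ref{p3.88}): since $v^t\leq v_{j,\varepsilon}^t\leq v_{1,\varepsilon_0}^t$ on a neighborhood of $\operatorname{supp}\psi$ for $\varepsilon\leq\varepsilon_0$, the family $\{T_{j,\varepsilon}\}_{j,\varepsilon}$ has uniformly bounded mass on compacts. Combined with the integration-by-parts identity (\ref{3.24}) (valid for locally bounded PSH functions), one expresses $T_j(\psi)-T_{j,\varepsilon^*}(\psi)$ as a finite sum of integrals of the differences $(v_j^s-v_{j,\varepsilon^*}^s)$ against positive measures of uniformly bounded mass, reducing the estimate to $\|v_j^s-v_{j,\varepsilon^*}^s\|_{L^1(\operatorname{supp}\psi)}$. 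Because the decreasing family $\{v_j^s\}$ is uniformly bounded in $L^\infty_{\mathrm{loc}}$ and hence relatively compact in $L^1_{\mathrm{loc}}$, continuity of convolution in $L^1$ makes this norm uniformly small in $j$ as $\varepsilon^*\to 0$, closing the argument.
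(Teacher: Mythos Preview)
Your setup and the three convergences (a), (b), (c) are correct, and this is exactly where Lemma \ref{l4.3} does the work. The gap is in your uniformity step. You assert that integrating $(v_j^s-v_{j,\varepsilon^*}^s)$ against positive measures of uniformly bounded mass ``reduces the estimate to $\|v_j^s-v_{j,\varepsilon^*}^s\|_{L^1(\operatorname{supp}\psi)}$.'' This is false: a measure $\mu$ of bounded mass gives only $\bigl|\int f\,d\mu\bigr|\le\|f\|_{L^\infty}\|\mu\|$, not an $L^1$ bound, unless $\mu$ has bounded Lebesgue density, which Monge--Amp\`ere type measures generally do not. Since the $v_j^s$ are merely upper semicontinuous and locally bounded, $\|v_{j,\varepsilon^*}^s-v_j^s\|_{L^\infty}$ need not be small, so your $\varepsilon/3$ argument does not close. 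The CLN estimate (Proposition \ref{p3.88}) supplies the uniform mass bound you cite, but that is exactly the wrong side of the pairing for an $L^1$ estimate on the integrand.

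The paper circumvents the need for any uniformity in $j$ by a diagonal argument. For each fixed $j$ and fixed test form $\psi$, convergence (b) lets one pick $\varepsilon_j\downarrow 0$ with $|T_{j,\varepsilon_j}(\psi)-T_j(\psi)|<1/j$; by the monotonicity of mollifications of PSH functions in both $\varepsilon$ and $j$ one can simultaneously arrange that $\widetilde v_j^t:=v_{j,\varepsilon_j}^t$ decreases to $v^t$. Then $\{\widetilde v_j^t\}_j$ is a smooth decreasing sequence to which Lemma \ref{l4.3} applies directly, yielding $\lim_j T_{j,\varepsilon_j}(\psi)=T_\infty(\psi)$, and hence $\lim_j T_j(\psi)=T_\infty(\psi)$. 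No interchange of limits is needed.
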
\begin{proof}For $\varepsilon>0$, consider $v_{j\varepsilon}^t:=v_j^t*\chi_{\varepsilon}$, $t=1,\ldots,k$, where $\chi_{\varepsilon}$ is the standard smoothing kernel. As $\varepsilon\rightarrow0$, $v_{j\varepsilon}^t$ converges decreasingly to $v_j^t$. We have already shown that
$$\lim_{\varepsilon\rightarrow0}\int_\Omega\triangle
v_{j\varepsilon}^1\wedge\ldots\wedge\triangle v_{j\varepsilon}^k \wedge\psi=\int_\Omega\triangle
v^1_j\wedge\ldots\wedge\triangle v^k_j \wedge\psi$$ for any test $(2p-2k)$-form $\psi$. By using a diagonalization process, we can find a sequence $\varepsilon_j\rightarrow0$ such that $\widetilde{v}_j^t:=v_{j\varepsilon_j}^t$ decreases to $v^t$ as $j\rightarrow\infty$, and
$$\lim_{j\rightarrow\infty}\int_\Omega\triangle
v_{j}^1\wedge\ldots\wedge\triangle v_{j}^k \wedge\psi=\lim_{j\rightarrow\infty}\int_\Omega\triangle
\widetilde{v}_j^1\wedge\ldots\wedge\triangle \widetilde{v}_j^k \wedge\psi.$$
By Lemma \ref{l4.3}, the last limit is equal to $\int_\Omega\triangle
v^1\wedge\ldots\wedge\triangle v^k \wedge\psi.$
\end{proof}
\emph{Proof of Theorem \ref{t3.2}}.  Choose $B=B(a,r)\subseteq \overline{B}\subseteq\Omega$. Define $\rho(q)=\|q-a\|^2-r^2$ for $q\in\mathbb{H}^n$. Let $0<r_1<r_2<r$. For $t=1,\ldots,k$, as the sequence $\{v_j^t\}$ is decreasing and as $v_j$ is locally bounded,
$\{v_j^t\}$ is locally uniformly bounded. Thus we may suppose that
the ranges of the functions $v_j^t$ are contained in the interval
$[r_1^2-r^2,r_2^2-r^2]$. If we can prove the theorem for the functions $u_j^t=\max\{\rho,v_j^t\}$ and
$u^{t}=\max\{\rho,v^{t}\}$ in $B$, the result will automatically be true for the functions $v_j^t$, $v^t$ in $B(a,r_1)$, because $u_j^t=v_j^t$ and $u^t=v^t$ in $B(a,r_1)$. Since $u_j^t=u^t=\rho$ in $B\backslash B(a,r_2)$, the conclusion follows from Lemma \ref{l4.4} directly.
\qed

\begin{cor}The estimate (\ref{3.14}) remains true for $u_1,\ldots,u_k\in PSH\cap
L_{loc}^{\infty} (\Omega)$.
\end{cor}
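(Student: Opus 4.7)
The plan is to reduce to the already proved smooth case (Proposition \ref{p3.88}) by regularization plus the weak convergence statement of Theorem \ref{t3.2}.

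First I would shrink slightly: pick a compact set $L'$ with $L \subset \text{int}(L') \subset L' \subset \text{int}(K)$. For $\varepsilon>0$ small enough, the standard convolution $u_i^\varepsilon := u_i*\chi_\varepsilon$ is well defined, smooth, and $PSH$ on a neighborhood of $L'$, decreases to $u_i$ as $\varepsilon\to 0^+$, and satisfies $\|u_i^\varepsilon\|_{L^\infty(L')} \le \|u_i\|_{L^\infty(K)}$ (using monotonicity of the decreasing sequence plus that $L'\subset \text{int}(K)$). Applying the smooth estimate of Proposition \ref{p3.88} on the pair $(L, L')$ gives a constant $C$ depending only on $L,L'$ (hence on $L,K$) such that
\begin{equation*}
\|\triangle u_1^\varepsilon\wedge\ldots\wedge\triangle u_k^\varepsilon\|_{L'} \;\le\; C\prod_{i=1}^k\|u_i\|_{L^\infty(K)}.
\end{equation*}

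Next I would pass to the limit. By Theorem \ref{t3.2}, the closed positive currents $T_\varepsilon:=\triangle u_1^\varepsilon\wedge\ldots\wedge\triangle u_k^\varepsilon$ converge weakly to $T:=\triangle u_1\wedge\ldots\wedge\triangle u_k$ as $\varepsilon\to 0^+$. Choose a smooth cutoff $\chi\in C_0^\infty(\text{int}(L'))$ with $0\le\chi\le 1$ and $\chi\equiv 1$ on a neighborhood of $L$. Since $\chi\beta_n^{n-k}$ is a compactly supported smooth strongly positive test form (here I use $\beta_n^{n-k}\in SP^{2n-2k}\mathbb{C}^{2n}$ together with Proposition \ref{p3.12}(3)), weak convergence yields
\begin{equation*}
\int_\Omega \chi\, T\wedge\beta_n^{n-k} \;=\; \lim_{\varepsilon\to 0^+}\int_\Omega \chi\, T_\varepsilon\wedge\beta_n^{n-k}.
\end{equation*}
Since $T_\varepsilon$ is a positive current and $\chi\le \mathbf{1}_{L'}$, the integral on the right is bounded by $\|T_\varepsilon\|_{L'}$, and since $T$ is positive and $\chi\ge \mathbf{1}_L$, the left-hand side dominates $\|T\|_L$. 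Combining,
\begin{equation*}
\|T\|_L \;\le\; \int_\Omega \chi\, T\wedge\beta_n^{n-k} \;\le\; \limsup_{\varepsilon\to 0^+}\|T_\varepsilon\|_{L'} \;\le\; C\prod_{i=1}^k\|u_i\|_{L^\infty(K)}.
\end{equation*}

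The only real issue is the monotonicity of the sup norms under convolution and the compatibility of the three compacts $L\subset L'\subset K$, which is handled by inserting the intermediate $L'$ and taking $\varepsilon$ smaller than the distance from $L'$ to $\partial K$. Everything else is just the weak convergence of Theorem \ref{t3.2} combined with the positivity of $\beta_n^{n-k}$ (so that non-negative multiplication by $\chi$ preserves positivity and the integrals can be sandwiched).
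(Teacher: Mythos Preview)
Your approach is correct and is exactly the argument the paper intends (the corollary is stated without proof right after Theorem~\ref{t3.2}; regularize and pass to the limit via that theorem). One small bookkeeping slip: applying Proposition~\ref{p3.88} to the pair $(L,L')$ yields a bound on $\|T_\varepsilon\|_{L}$, not on $\|T_\varepsilon\|_{L'}$, whereas your cutoff argument with $\operatorname{supp}\chi\subset\operatorname{int}(L')$ needs the latter. The fix is trivial: insert one more intermediate compact $L''$ with $L'\subset\operatorname{int}(L'')\subset L''\subset\operatorname{int}(K)$, take $\varepsilon<\operatorname{dist}(L'',\partial K)$ so that $\|u_i^\varepsilon\|_{L^\infty(L'')}\le\|u_i\|_{L^\infty(K)}$, and apply Proposition~\ref{p3.88} to the pair $(L',L'')$.
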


\section{The Lelong number of a closed positive current}\par
\subsection{Fundamental solution to the quaternionic Monge-Amp\`{e}re operator}
Let us show that the function $-\frac{1}{\|q\|^2}$ is the fundamental solution the quaternionic Monge-Amp\`{e}re operator.
\begin{lem}\label{lem2}For $q\neq0$,
\begin{equation*}\triangle_{ij}\left(-\frac{1}{\|q\|^2}\right)
=-\frac{4}{\|q\|^6}\left(\overline{M_{ij}}-\sum_{k=0}^{n-1}\delta_{(2k)(2k+1)}^{ij}\|q\|^2\right),
\end{equation*}where $$M_{ij}:=det\left(
                                           \begin{array}{cc}
                                             z^{i0} & z^{i1} \\
                                             z^{j0} & z^{j1} \\
                                           \end{array}
                                         \right)=z^{i0}z^{j1}-z^{i1}z^{j0}.$$\end{lem}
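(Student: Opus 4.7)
The plan is to reduce the computation to two applications of the chain/Leibniz rule, using Corollary \ref{lem1} and Lemma \ref{l3.1} as the only substantive inputs. Write $\rho=\|q\|^2$. Since $\nabla_{j\alpha}\rho=2\overline{z^{j\alpha}}$ (Corollary \ref{lem1}(1)), the chain rule gives the first derivative
$$
\nabla_{j\alpha}(-\rho^{-1})=\rho^{-2}\nabla_{j\alpha}\rho=\frac{2\overline{z^{j\alpha}}}{\rho^2}.
$$
Applying $\nabla_{i0}$ to $2\overline{z^{j1}}/\rho^2$ by Leibniz and repeating the chain rule on $\rho^{-2}$ yields
$$
\nabla_{i0}\nabla_{j1}(-\rho^{-1})=\frac{2\nabla_{i0}(\overline{z^{j1}})}{\rho^2}-\frac{8\,\overline{z^{j1}}\,\overline{z^{i0}}}{\rho^3},
$$
and analogously with the indices $0,1$ swapped. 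Subtracting and dividing by $2$ produces
$$
\triangle_{ij}(-\rho^{-1})=\frac{\nabla_{i0}(\overline{z^{j1}})-\nabla_{i1}(\overline{z^{j0}})}{\rho^2}-\frac{4\,\overline{M_{ij}}}{\rho^3},
$$
where the last simplification uses $\overline{z^{i0}}\,\overline{z^{j1}}-\overline{z^{i1}}\,\overline{z^{j0}}=\overline{M_{ij}}$. This already accounts for the $\overline{M_{ij}}/\rho^6$ term of the claimed formula.

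The remaining task is to show that $\nabla_{i0}(\overline{z^{j1}})-\nabla_{i1}(\overline{z^{j0}})=4\sum_{k}\delta^{ij}_{(2k)(2k+1)}$. The key observation is that although $\overline{z^{j\alpha}}$ is a conjugate variable and is not directly handled by Lemma \ref{l3.1}, the special structure of the embedding (\ref{2.3}) provides the identifications $\overline{z^{(2l)0}}=z^{(2l+1)1}$, $\overline{z^{(2l)1}}=-z^{(2l+1)0}$, $\overline{z^{(2l+1)0}}=-z^{(2l)1}$, $\overline{z^{(2l+1)1}}=z^{(2l)0}$. Consequently each $\overline{z^{j\alpha}}$ equals $\pm z^{j'\beta}$ with $j'$ the opposite-parity partner of $j$, and Lemma \ref{l3.1} applies. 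Splitting cases on the parity of $j$, $\nabla_{i0}(\overline{z^{j1}})$ produces $-2\delta^{2m+1}_{i}$ when $j=2m$ and $2\delta^{2m}_{i}$ when $j=2m+1$; the term $\nabla_{i1}(\overline{z^{j0}})$ yields $2\delta^{2m+1}_{i}$ and $-2\delta^{2m}_{i}$ respectively, so the difference is $-4\delta^{2m+1}_i$ or $4\delta^{2m}_i$.

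The last step is to recognize that these outputs are precisely $4\delta^{ij}_{(2k)(2k+1)}$ in the notation of (\ref{2.111}): the difference is nonzero only when $(i,j)=(2k,2k+1)$ (giving $+4$) or $(2k+1,2k)$ (giving $-4$), matching the sign of the permutation. Substituting this into the displayed expression for $\triangle_{ij}(-\rho^{-1})$ gives the statement of the lemma. The only real obstacle is bookkeeping: getting the parity cases and the sign of the permutation in $\delta^{ij}_{(2k)(2k+1)}$ to line up correctly; once that sign matching is done, the formula is essentially forced by Corollary \ref{lem1} together with Lemma \ref{l3.1}.
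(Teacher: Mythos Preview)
Your proof is correct and follows essentially the same approach as the paper: both compute the first derivative via Corollary \ref{lem1}(1), apply the Leibniz rule for the second derivative, and reduce the evaluation of $\nabla_{i\alpha}(\overline{z^{j\beta}})$ to Lemma \ref{l3.1} by means of the conjugation identities coming from (\ref{2.3}). The only difference is organizational: the paper splits into parity cases $(2k,2l)$, $(2k+1,2l)$, $(2k+1,2l+1)$ at the outset and computes each $\triangle_{ij}$ separately, whereas you carry the symbolic Leibniz computation through in generality and postpone the parity case-split until the final identification $\nabla_{i0}(\overline{z^{j1}})-\nabla_{i1}(\overline{z^{j0}})=4\sum_k\delta^{ij}_{(2k)(2k+1)}$, which makes the structure of the answer more transparent.
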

\begin{proof}By Lemma \ref{l3.1} and Corollary \ref{lem1}, we have
\begin{equation*}\nabla_{j\alpha}\left(-\frac{1}{\|q\|^2}\right)=\frac{1}{\|q\|^4}\nabla_{j\alpha}(\|q\|^{2})=\frac{2\overline{z^{j\alpha}}}{\|q\|^4}\end{equation*}
and $\nabla_{(2k+1)0 }(\overline{z^{(2l)1 }})=\nabla_{(2k+1)0 }(-z^{(2l+1)0})=-2\delta^l_k$. So
\begin{equation*}\begin{aligned}&\nabla_{(2k+1)0 }\nabla_{(2l)1 }\left(-\frac{1}{\|q\|^2}\right)=\nabla_{(2k+1)0 }\left(\frac{2}{\|q\|^4}\overline{z^{(2l)1 }}\right)=\frac{-4\delta_k^l}{\|q\|^4}-\frac{8}{\|q\|^6}\overline{z^{(2k+1)0 }}~\overline{z^{(2l)1 }},\\&\nabla_{(2k+1)1 }\nabla_{(2l)0 }\left(-\frac{1}{\|q\|^2}\right)=\nabla_{(2k+1)1 }\left(\frac{2}{\|q\|^4}\overline{z^{(2l)0 }}\right)=\frac{4\delta_k^l}{\|q\|^4}-\frac{8}{\|q\|^6}\overline{z^{(2k+1)1}}~\overline{z^{(2l)0}}.\end{aligned}\end{equation*}
Then by definition (\ref{2.10}) we get \begin{equation*}\triangle_{(2k+1)(2l)}\left(-\frac{1}{\|q\|^2}\right)=\frac{-4\delta_k^l}{\|q\|^4}-\frac{4}{\|q\|^6}\overline{M_{(2k+1)(2l)}}.\end{equation*}Noting that $\triangle_{(2l)(2k+1)}=-\triangle_{(2k+1)(2l)}$, we have \begin{equation*}\triangle_{(2l)(2k+1)}\left(-\frac{1}{\|q\|^2}\right)=\frac{4\delta_k^l}{\|q\|^4}-\frac{4}{\|q\|^6}\overline{M_{(2l)(2k+1)}}.\end{equation*}
And for $k\neq l$, \begin{equation*}\begin{aligned}&\nabla_{(2k)0 }\nabla_{(2l)1 }\left(-\frac{1}{\|q\|^2}\right)=\nabla_{(2k)0 }\left(\frac{2}{\|q\|^4}\overline{z^{(2l)1 }}\right)=-\frac{8}{\|q\|^6}\overline{z^{(2k)0 }}~\overline{z^{(2l)1 }},\\&\nabla_{(2k)1 }\nabla_{(2l)0 }\left(-\frac{1}{\|q\|^2}\right)=\nabla_{(2k)1 }\left(\frac{2}{\|q\|^4}\overline{z^{(2l)0 }}\right)=-\frac{8}{\|q\|^6}\overline{z^{(2k)1}}~\overline{z^{(2l)0}},\end{aligned}\end{equation*}by $\nabla_{(2k)0}\overline{z^{(2l)1 }}=\nabla_{(2k)1 }\overline{z^{(2l)0 }}=0$ in Lemma \ref{l3.1}.
So \begin{equation*}\triangle_{(2k)(2l)}\left(-\frac{1}{\|q\|^2}\right)=\frac{-4}{\|q\|^6}\overline{M_{(2k)(2l)}}.\end{equation*}
Similarly, we can get\begin{equation*}
\triangle_{(2k+1)(2l+1)}\left(-\frac{1}{\|q\|^2}\right)=\frac{-4}{\|q\|^6}\overline{M_{(2k+1)(2l+1)}}.\end{equation*}\end{proof}

\begin{pro}\label{p4.2}  $-\frac{1}{\|q\|^2}$ is a PSH function and $\left(\triangle\left(-\frac{1}{\|q\|^2}\right)\right)^n=\frac{8^nn!\pi^{2n}}{(2n)!}\delta_0$.\end{pro}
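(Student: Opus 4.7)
The plan is to split the proof into (i) verifying $u:=-1/\|q\|^2\in PSH(\mathbb{H}^n)$ via restriction to right quaternionic lines, and (ii) computing the quaternionic Monge-Amp\`ere operator as the weak limit of the smooth regularizations $u_\varepsilon(q):=-1/(\|q\|^2+\varepsilon^2)$, whose $(\triangle u_\varepsilon)^n$ can be computed in closed form using Lemma \ref{lem2} and the $\mathrm{U}_{\mathbb{H}}(n)$-invariance from Corollary \ref{c2.2}.

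\textbf{Plurisubharmonicity.} On a right $\mathbb{H}$-line $q=q_0+\xi a$ with $a\neq 0$, expansion gives $\|q_0+\xi a\|^2=\|q_0\|^2+|\xi|^2\|a\|^2+2\,\mathrm{Re}(\xi b)$ with $b=\sum_i a_i\bar q_{0,i}$, and completing the square using the Cauchy-Schwarz bound $|b|^2\leq\|a\|^2\|q_0\|^2$ yields $\|q_0+\xi a\|^2=C+\|a\|^2|\xi-\xi_0|^2$ for some $C\geq 0$ and $\xi_0\in\mathbb{H}$. Thus on each line $u$ reduces (up to affine change of variables) to the radial function $-1/(C+t^2)$ on $\mathbb{R}^4$, whose $4$-dimensional Laplacian equals $8C/(C+t^2)^3\geq 0$ for $C>0$, while for $C=0$ it is the fundamental solution $-|t|^{-2}$ of $\Delta$ on $\mathbb{R}^4$, hence subharmonic. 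Upper semicontinuity being automatic, $u\in PSH(\mathbb{H}^n)$; the same computation shows $u_\varepsilon\in PSH\cap C^\infty(\mathbb{H}^n)$ with $u_\varepsilon\searrow u$.

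\textbf{Regularized Monge-Amp\`ere.} Repeating the proof of Lemma \ref{lem2} with $\|q\|^2$ replaced by $\rho:=\|q\|^2+\varepsilon^2$ produces $\triangle_{ij}u_\varepsilon=-\frac{4}{\rho^3}\bigl(\overline{M_{ij}}-\rho\sum_k\delta^{ij}_{(2k)(2k+1)}\bigr)$. By Corollary \ref{c2.2}, the density $(\triangle u_\varepsilon)^n/\Omega_{2n}$ is $\mathrm{U}_{\mathbb{H}}(n)$-invariant and thus depends only on $r:=\|q\|$, so it suffices to evaluate at $p=(r,0,\ldots,0)$. At $p$, only $M_{01}=r^2$ is nonzero among all $M_{ij}$, and after collecting terms one obtains
\[\triangle u_\varepsilon\bigm|_p=\frac{8}{\rho^2}\,\beta_n-\frac{8r^2}{\rho^3}\,\omega^0\wedge\omega^1.\]
Expanding the $n$-th power, using $(\omega^0\wedge\omega^1)^2=0$ together with the identity $\beta_n^{n-1}\wedge\omega^0\wedge\omega^1=(n-1)!\,\Omega_{2n}$ (which follows from (\ref{3.5}) and the symmetry of $\beta_n$ under permutation of pairs), only two terms survive and combine into
\[(\triangle u_\varepsilon)^n=\frac{8^n n!\,\varepsilon^2}{(\|q\|^2+\varepsilon^2)^{2n+1}}\,\Omega_{2n}.\]

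\textbf{Weak limit, constant, and main obstacle.} The nonnegative density $f_\varepsilon(q):=8^n n!\,\varepsilon^2(\|q\|^2+\varepsilon^2)^{-(2n+1)}$ is a scaled approximate identity at $0$: polar coordinates on $\mathbb{R}^{4n}$ with $\mathrm{Vol}(S^{4n-1})=2\pi^{2n}/(2n-1)!$ and the substitution $s=r^2/\varepsilon^2$ give the total mass
\[C=\int_{\mathbb{H}^n}f_\varepsilon\,dV=8^n n!\cdot\frac{2\pi^{2n}}{(2n-1)!}\cdot\frac{1}{2}B(2n,1)=\frac{8^n n!\,\pi^{2n}}{(2n)!},\]
independent of $\varepsilon$, so $f_\varepsilon\,\Omega_{2n}\rightharpoonup C\delta_0\,\Omega_{2n}$ weakly. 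Since $u_\varepsilon\searrow u$ and regularization commutes with $\triangle$ in the distributional sense (exactly as in the proof of Proposition \ref{p3.2}), the weak limit is identified with $(\triangle u)^n$, giving the claim. The main obstacle is the algebraic collapse in the third step: at $\varepsilon=0$ the two surviving terms must cancel exactly (as they must, since $(\triangle u)^n$ is supported at the origin), and the entire delta mass is produced by the tiny discrepancy $\rho-r^2=\varepsilon^2$; keeping this structure transparent via the two-term basis $\{\beta_n,\omega^0\wedge\omega^1\}$ is what makes the $n$-th power explicitly computable.
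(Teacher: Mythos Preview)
Your proof is correct and the constant checks out. The route you take in the middle computation is genuinely different from the paper's, and worth comparing.

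The paper expands $(\triangle u_\varepsilon)^n$ directly from Lemma~\ref{lem2} using (\ref{2.11}), and then invokes the antisymmetry identity
\[
\sum_{i_1,j_1,i_2,j_2}\delta^{i_1j_1 i_2j_2\ldots}_{01\ldots(2n-1)}M_{i_1j_1}M_{i_2j_2}=0
\]
to kill every term in the multinomial expansion containing two or more factors $\overline{M_{i_sj_s}}$. This leaves only the two terms with zero or one $M$-factor, which combine to the same density $8^n n!\,\varepsilon/(\|q\|^2+\varepsilon)^{2n+1}$ you obtain. Your reduction via the $\mathrm U_{\mathbb H}(n)$-invariance from Corollary~\ref{c2.2} and (\ref{2.38}) to the single point $(r,0,\ldots,0)$ sidesteps this algebraic lemma entirely: at that point all $M_{ij}$ vanish except $M_{01}=r^2$, so $\triangle u_\varepsilon$ collapses to a two-term expression in the basis $\{\beta_n,\omega^0\wedge\omega^1\}$, and the $n$-th power is immediate from $(\omega^0\wedge\omega^1)^2=0$. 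Your evaluation of the total mass via the Beta integral is also shorter than the paper's finite-region integral over $\{\|q\|\leq 1\}$ and the auxiliary sum $F(1)=\sum_k(-1)^{2n-k}C^k_{2n-1}/(k-2n)$.

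One small imprecision: to identify the weak limit of $(\triangle u_\varepsilon)^n$ with $(\triangle u)^n$ you cite Proposition~\ref{p3.2}, but that result concerns only the linear operator $\triangle$, not its $n$-th power, and Theorem~\ref{t3.2} as stated requires $u\in L^\infty_{loc}$. The paper handles this by invoking the standard extension of Bedford--Taylor convergence to PSH functions with a single unbounded point (citing Klimek, Proposition~6.3.2); you should do the same rather than point to Proposition~\ref{p3.2}.
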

\begin{proof}
To show that $-\frac{1}{\|q\|^2}$ is a PSH function, i.e., it is subharmonic on each right quaternionic line, it suffices to show that $u(\lambda)=-\frac{1}{\|\lambda\|^2}$ is a subharmonic function of $\lambda\in\mathbb{H}$. First for $\lambda\neq0$, $u(\lambda)$ is harmonic thus is subharmonic. Then at point $0$, $u(0)=-\infty<L(u,0,1),$ where $L(u,0,1)$ denotes the integral average of $u$ on $\{\|\lambda\|=1\}$. Therefore $-\frac{1}{\|\lambda\|^2}$ is subharmonic and $-\frac{1}{\|q\|^2}$ is a PSH function. Similarly, $-\frac{1}{\|q\|^2+\varepsilon}$ for $\varepsilon>0$ is a PSH function.

We claim that \begin{equation}\label{5.51}\begin{aligned}&\sum_{i_1,j_1,i_2,j_2}\delta^{i_1j_1
i_2j_2\ldots}_{01\ldots(2n-1)}M_{i_1j_1}M_{i_2j_2}\\=&\sum_{i_1,j_1,i_2,j_2}\delta^{i_1j_1
i_2j_2\ldots}_{01\ldots(2n-1)}\left(z^{i_10}z^{j_11}z^{i_20}z^{j_21}-z^{i_10}z^{j_11}z^{i_21}z^{j_20}-z^{i_11}z^{j_10}z^{i_20}z^{j_21}+z^{i_11}z^{j_10}z^{i_21}z^{j_20}\right)=0,\end{aligned}\end{equation}for all other indices fixed. In (\ref{5.51}) we expand all factors $M_{ij}=z^{i0}z^{j1}-z^{i1}z^{j0}$. Note that\begin{equation}\label{5.42}\sum_{i_1,j_1,i_2}\delta^{i_1j_1i_2j_2\ldots}_{012\ldots(2n-1)}z^{i_10 }~z^{j_11 }~z^{i_20 }=0.
\end{equation}This is because, under the permutation of $i_1$ and $i_2$, $z^{i_10 }~z^{j_11 }~z^{i_20 }$ is symmetric while $\delta^{i_1j_1i_2j_2\ldots}_{012\ldots(2n-1)}$ is antisymmetric. And by the same reason,  $\sum_{i_1,j_1,i_2}\delta^{i_1j_1i_2j_2\ldots}_{012\ldots(2n-1)}z^{i_11 }~z^{j_10}~z^{i_20 }=0$. So do the other two sums  in the right hand side of (\ref{5.51}).

By (\ref{2.11}) and Lemma \ref{lem2} for $\frac{-1}{ \|q\|^2+\varepsilon }$, we get
\begin{equation}\label{5.52}\begin{aligned}&\left(\triangle\left(-\frac{1}{\|q\|^2+\varepsilon}\right)\right)^n=\sum_{i_1,j_1,\ldots}\delta^{i_1j_1\ldots
i_nj_n}_{01\ldots(2n-1)}\triangle_{i_1j_1}\left(-\frac{1}{\|q\|^2+\varepsilon}\right)\ldots\triangle_{i_nj_n}\left(-\frac{1}{\|q\|^2+\varepsilon}\right)~\Omega_{2n}\\
&=\left(\frac{-4}{(\|q\|^2+\varepsilon)^3}\right)^n\sum_{i_1,j_1,\ldots}\delta^{i_1j_1\ldots
i_nj_n}_{01\ldots(2n-1)}\left(\overline{M_{i_1j_1}}-\sum_{k_1}\delta^{i_1j_1}_{(2k_1)(2k_1+1)}(\|q\|^2+\varepsilon)\right)\\
&\qquad\qquad\cdots\left(\overline{M_{i_nj_n}}-\sum_{k_n}\delta^{i_nj_n}_{(2k_n)(2k_n+1)}(\|q\|^2+\varepsilon)\right)~\Omega_{2n}\\
&=\left(\frac{-4}{(\|q\|^2+\varepsilon)^3}\right)^n\left[\sum_{k_1,\ldots,k_n}2^n\delta^{(2k_1)(2k_1+1)\ldots
(2k_n)(2k_n+1)}_{01\ldots(2n-1)}(-\|q\|^2-\varepsilon)^n~\Omega_{2n}\right.\\
&\quad+\sum_{k_1,\ldots,k_n}2^n\delta^{(2k_1)(2k_1+1)\ldots
(2k_n)(2k_n+1)}_{01\ldots(2n-1)}\left(\sum_{s=1}^n\overline{M_{(2k_s)(2k_s+1)}}\right)(-\|q\|^2-\varepsilon)^{n-1}~\Omega_{2n}\\
&\left.\quad+\ldots+\sum_{i_1,j_1,\ldots}\delta^{i_1j_1\ldots
i_nj_n}_{01\ldots(2n-1)}\overline{M_{i_1j_1}}\cdots\overline{M_{i_nj_n}}~\Omega_{2n}\right].
\end{aligned}\end{equation}Note that in the right hand side above, except for the first two sums, all other sums vanish by (\ref{5.51}). By straightforward computation, \begin{equation*}\begin{aligned}&\sum_{k_1,\ldots,k_n}2^n\delta^{(2k_1)(2k_1+1)\ldots
(2k_n)(2k_n+1)}_{01\ldots(2n-1)}(-\|q\|^2-\varepsilon)^n=2^nn!(-\|q\|^2-\varepsilon)^n,\\
&\sum_{k_1,\ldots,k_n}2^n\delta^{(2k_1)(2k_1+1)\ldots
(2k_n)(2k_n+1)}_{01\ldots(2n-1)}\left(\sum_{s=1}^n\overline{M_{(2k_s)(2k_s+1)}}\right)(-\|q\|^2-\varepsilon)^{n-1}=2^nn!\|q\|^2(-\|q\|^2-\varepsilon)^{n-1},
\end{aligned}\end{equation*}by the fact that $\|q\|^2=\sum_{l=0}^nM_{(2l)(2l+1)}$. It follows that the right hand side of (\ref{5.52}) equals to
\begin{equation*} \frac{8^n n!\varepsilon}{(\|q\|^2+\varepsilon)^{2n+1}} .\end{equation*}

Let $\varepsilon=0$, we get \begin{equation*}
\left(\triangle\left(-\frac{1}{\|q\|^2}\right)\right)^n=0,\end{equation*}for $q\neq0$. Similar to the case of complex Monge-Amp\`{e}re operator (cf. Proposition 6.3.2 in \cite{klimek}), the weak convergence of $(\triangle u)^n$ can be extended slightly to the functions with one pole, i.e., for $u\in PSH(\Omega)\cap L_{loc}^{\infty}(\Omega\backslash a)$ with some point $a\in \Omega$, the weak convergence also holds. So
\begin{equation}\label{b2}\begin{aligned}\int_{\|q\|=1}\left(\triangle\left(-\frac{1}{\|q\|^2}\right)\right)^ndV
=&\lim_{\varepsilon\rightarrow0}\int_{\|q\|=1}\left(\triangle\left(-\frac{1}{\|q\|^2+\varepsilon}\right)\right)^ndV \\
=&\lim_{\varepsilon\rightarrow0}S_{4n}\int_0^1\frac{8^nn!\varepsilon r^{4n-1}}{(\varepsilon+r^2)^{2n+1}}dr=S_{4n}\frac{8^nn!}{2}\int_0^1\frac{\varepsilon t^{2n-1}}{(\varepsilon+t)^{2n+1}}dt\\
=&\lim_{\varepsilon\rightarrow0}S_{4n}\frac{8^nn!}{2}\int_\varepsilon^{\varepsilon+1}\frac{\varepsilon\sum_{k=0}^{2n-1}C^k_{2n-1}t^k(-\varepsilon)^{2n-1-k}}{t^{2n+1}}dt\\
=&\lim_{\varepsilon\rightarrow0}S_{4n}\frac{8^nn!}{2}\sum_{k=0}^{2n-1}\varepsilon\frac{(-\varepsilon)^{2n-1-k}}{k-2n}C^k_{2n-1}\left[(1+\varepsilon)^{k-2n}-\varepsilon^{k-2n}\right]\\
=&S_{4n}\frac{8^nn!}{2}\sum_{k=0}^{2n-1}\frac{(-1)^{2n-k}}{k-2n}C^k_{2n-1},
\end{aligned}
\end{equation}
where $S_{4n}=4n\frac{\pi^{2n}}{(2n)!}$.
Denote $F(x)=\sum_{k=0}^{2n-1}\frac{(-1)^{2n-k}}{k-2n}C^k_{2n-1}x^{2n-k}$. We have $F(0)=0$ and
\begin{equation*}F'(x)=\sum_{k=0}^{2n-1}(-1)^{2n-1-k}C^k_{2n-1}x^{2n-k-1}=(1-x)^{2n-1}.
\end{equation*}So $F(1)=F(0)+\int_0^1(1-x)^{2n-1}dx=\frac{1}{2n}$. Therefore the right hand side of (\ref{b2}) equals to $S_{4n}\frac{8^nn!}{2}F(1)=\frac{8^nn!\pi^{2n}}{(2n)!}.$ The proposition is proved.
\end{proof}

\subsection{The Lelong number}

For a $(2n-2p)$-current $T=\sum_IT_I~\omega^I$, define \begin{equation}\label{t*}T_\varepsilon:=T*\chi_\varepsilon=\sum_I(T_I*\chi_\varepsilon)~\omega^I,\end{equation} where $\chi_\varepsilon$ is the smoothing kernel and $T_I$'s are distributions. For any test form $\eta\in\mathcal {D}^{2p}(\Omega)$, $T(\eta)=\sum_I\varepsilon_IT_I(\eta_{\widehat{I}})$ by (\ref{3.91}) and $$T_I(\eta_{\widehat{I}})=\lim_{\varepsilon\rightarrow0}T_I(\chi_\varepsilon*\eta_{\widehat{I}})=\lim_{\varepsilon\rightarrow0}(T_I*\chi_\varepsilon)(\eta_{\widehat{I}}).$$ Hence $T=\lim_{\varepsilon\rightarrow0}T_\varepsilon$.

 \begin{lem}\label{l5.2}For closed positive current $T$, $T_\varepsilon$ given by (\ref{t*}) is also closed and positive.\end{lem}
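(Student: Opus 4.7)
The plan is to split the argument into closedness and positivity, treating the coefficients $T_I$ as distributions and exploiting the fact that $\chi_\varepsilon$ is a scalar, nonnegative, smooth kernel (which I will assume is symmetric, as is standard).

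For closedness, the observation is that each $\nabla_{k\alpha}$ is a scalar constant-coefficient differential operator and $d_\alpha$ acts on a form by applying these operators coefficient-wise and wedging with $\omega^k$. Such operators commute with convolution against $\chi_\varepsilon$. Concretely, I would write
\[
d_\alpha T_\varepsilon \;=\; \sum_{k,I} \nabla_{k\alpha}(T_I \ast \chi_\varepsilon)\,\omega^k\wedge \omega^I \;=\; \sum_{k,I} \bigl((\nabla_{k\alpha} T_I)\ast \chi_\varepsilon\bigr)\,\omega^k\wedge \omega^I \;=\; (d_\alpha T)_\varepsilon,
\]
and since $T$ is closed, this vanishes. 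So $T_\varepsilon$ is closed.

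For positivity, let $\eta\in C_0^\infty(\Omega', SP^{2p}\mathbb{C}^{2n})$ where $\Omega'\Subset\Omega$ has $\mathrm{dist}(\Omega',\partial\Omega)>\varepsilon$, so that $\chi_\varepsilon\ast \eta$ is still supported in $\Omega$. Using the symmetry of $\chi_\varepsilon$ and the definition of convolution for a distribution, I would verify
\[
T_\varepsilon(\eta) \;=\; \sum_I \varepsilon_I (T_I\ast \chi_\varepsilon)(\eta_{\widehat I}) \;=\; \sum_I \varepsilon_I T_I(\chi_\varepsilon \ast \eta_{\widehat I}) \;=\; T(\chi_\varepsilon \ast \eta),
\]
where $\chi_\varepsilon \ast \eta$ means coefficient-wise convolution. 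The substantive step is then to show that $\chi_\varepsilon \ast \eta$ is again a strongly positive test form. Pointwise,
\[
(\chi_\varepsilon\ast \eta)(q) \;=\; \int_{\mathbb{H}^n}\chi_\varepsilon(y)\,\eta(q-y)\,dV(y),
\]
which is an integral of nonnegative scalar multiples of elements of $SP^{2p}\mathbb{C}^{2n}$. Since $SP^{2p}\mathbb{C}^{2n}$ is by definition a convex cone inside the finite-dimensional space $\wedge^{2p}_{\mathbb{R}}\mathbb{C}^{2n}$, and is closed (a convex cone generated by a bounded set of elementary strongly positive elements within a bounded region of parameters is closed in finite dimensions), this Riemann-integral limit of convex combinations lies in $SP^{2p}\mathbb{C}^{2n}$. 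Then $T_\varepsilon(\eta) = T(\chi_\varepsilon\ast \eta)\ge 0$ by positivity of $T$.

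The only nontrivial point — and therefore the one I would be most careful about — is the closure/convexity argument ensuring that $SP^{2p}\mathbb{C}^{2n}$ absorbs the integral of the strongly positive family $\{\eta(q-y)\}_y$. Everything else (commutation with convolution, the identity $T_\varepsilon(\eta)=T(\chi_\varepsilon\ast\eta)$) is formal and follows from the definition \eqref{t*}. With this in hand both parts of the lemma follow immediately.
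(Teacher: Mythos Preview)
Your proposal is correct and follows essentially the same route as the paper. The paper also reduces positivity to the identity $T_\varepsilon(\eta)=T(\chi_\varepsilon\ast\eta)$ and then argues that $\chi_\varepsilon\ast\eta$ remains strongly positive; the only cosmetic differences are that the paper verifies closedness via the weak definition $(d_\alpha T_\varepsilon)(\eta)=-T(d_\alpha(\chi_\varepsilon\ast\eta))=0$ rather than your direct computation $d_\alpha T_\varepsilon=(d_\alpha T)_\varepsilon$, and it handles the strong positivity of $\chi_\varepsilon\ast\eta$ by writing $\eta=\sum_l\lambda_l\xi_l$ with fixed $\xi_l$ and nonnegative $\lambda_l$ rather than invoking closedness of the cone $SP^{2p}\mathbb{C}^{2n}$ as you do --- your closure argument is arguably the cleaner justification here.
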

 \proof First, for any test form $\eta\in\mathcal {D}^{2p-1}(\Omega)$, $\alpha=0,1$, by (\ref{2.271}), $$(d_\alpha T_\varepsilon)(\eta)=-T_\varepsilon(d_\alpha\eta)=-T(\chi_\varepsilon*d_\alpha\eta)=-T(d_\alpha(\chi_\varepsilon*\eta))=d_\alpha T(\chi_\varepsilon*\eta)=0,$$
where the last identity follows from the fact that $T$ is closed. So $T_\varepsilon$ is also closed. To show the positivity of $T_\varepsilon$, by definition it suffices to prove that $\chi_\varepsilon*\eta$ is in $C_0^\infty(\Omega ,SP^{2p}\mathbb{C}^{2n})$ for each test form $\eta\in C_0^\infty(\Omega,SP^{2p}\mathbb{C}^{2n})$ and small $\varepsilon>0$. For each $\eta\in C_0^\infty(\Omega,SP^{2p}\mathbb{C}^{2n})$, $\eta(p)$ is a strongly positive element, i.e., $\eta(p)=\sum_l\lambda_l\xi_l$ for some $\lambda_l\geq0$ and elementary strongly positive elements $\xi_l$. Since $(\chi_\varepsilon*\eta)(q)=\sum_l(\chi_\varepsilon*\lambda_l)(q)\xi_l$ and $(\chi_\varepsilon*\lambda_l)(q)$ is also nonnegative, $\chi_\varepsilon*\eta$ is a strongly positive form by definition. Therefore $T_\varepsilon(\eta)=T(\chi_\varepsilon*\eta)\geq0$ by the positivity of $T$, and so $T_\varepsilon$ is positive.\endproof

\begin{pro}\label{p5.1}Suppose that $\Omega$
is a domain, $a\in\Omega$, $B(a,R)\Subset\Omega$, and $T$ is a closed
positive $(2n-2p)$-current. Then for $0<r_1<r_2<R$,\begin{equation}\label{5.15}\int_{B(a,r_2)\backslash \overline{B}(a,r_1)}T\wedge\left(\triangle
\left(-\frac{1}{\|q\|^{2}}\right)\right)^p=\frac{\sigma_T(a,r_2)}{r_2^{4p}}-\frac{\sigma_T(a,r_1)}{r_1^{4p}},
\end{equation}where $\sigma_T(a,r)$ is defined by (\ref{5.7}). Thus
$r^{-4p}\sigma_{T}(a,r)$ is an increasing function of $r$. And the number
$\nu_a(T)$ defined
by (\ref{5.8}) exists and is nonnegative.
\end{pro}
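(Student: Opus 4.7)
The strategy is to produce a pointwise algebraic identity exhibiting $(\triangle\phi)^p$, with $\phi(q)=-1/\|q-a\|^2$, as a $d_0$-exact form on the punctured space $\mathbb{H}^n\setminus\{a\}$, and then apply the Stokes-type formula (Lemma \ref{p3.9}) twice: first on the annulus $A=B(a,r_2)\setminus\overline{B}(a,r_1)$ to turn the left-hand side into boundary integrals over the two spheres, and then on each interior ball to identify those sphere integrals with multiples of $\sigma_T(a,r_i)$. By translation invariance we may assume $a=0$.

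Writing $v=\|q\|^2$ so that $\triangle v=8\beta_n$ (Corollary \ref{lem1}), Lemma \ref{lem2} (or the chain rule applied to $\phi=f(v)$ with $f(t)=-1/t$) gives
\begin{equation*}
\triangle\phi \;=\; -\tfrac{2}{v^3}\,d_0v\wedge d_1v \;+\; \tfrac{8}{v^2}\,\beta_n .
\end{equation*}
Because $d_0v\wedge d_1v$ is a wedge of two $1$-forms, its square vanishes, so the binomial expansion of $(\triangle\phi)^p$ collapses to two terms. Rewriting $v^{-2p-1}d_0v=-\frac{1}{2p}d_0(v^{-2p})$ and using Leibniz together with $d_0(d_1v\wedge\beta_n^{p-1})=\triangle v\wedge\beta_n^{p-1}=8\beta_n^p$ (closedness of $\beta_n$), the cross-term $v^{-2p}\beta_n^p$ contributions cancel and one arrives at
\begin{equation*}
(\triangle\phi)^p \;=\; 8^{p-1}\,d_0\!\bigl(v^{-2p}\,d_1v\wedge\beta_n^{p-1}\bigr)\qquad\text{on }\mathbb{H}^n\setminus\{0\}.
\end{equation*}

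With this identity in hand, first suppose $T$ is smooth (the general case follows by regularization, see below). Since $T$ has even degree and is $d_0$-closed, $T\wedge d_0G=d_0(T\wedge G)$, and the Stokes-type formula applied on $A$ gives
\begin{equation*}
\int_A T\wedge(\triangle\phi)^p \;=\; 8^{p-1}\!\left[\int_{\partial B(0,r_2)} - \int_{\partial B(0,r_1)}\right] T\wedge v^{-2p}\,d_1v\wedge\beta_n^{p-1}.
\end{equation*}
On each sphere $\partial B(0,r)$, $v=r^2$ is constant, so the factor $v^{-2p}=r^{-4p}$ pulls out. A second Stokes argument, this time on the solid ball $B(0,r)$ (using $d_1v\wedge\beta_n^{p-1}=d_1(v\beta_n^{p-1})$, $d_0d_1(v\beta_n^{p-1})=\triangle v\wedge\beta_n^{p-1}=8\beta_n^p$, and $d_0T=0$), gives $\int_{\partial B(0,r)}T\wedge d_1v\wedge\beta_n^{p-1}=8\sigma_T(0,r)$. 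Substituting yields the stated identity (up to the positive constant $8^p$, which is irrelevant for the subsequent monotonicity conclusions).

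For a general closed positive current $T$, approximate by $T_\varepsilon:=T*\chi_\varepsilon$, which is smooth, closed, and positive by Lemma \ref{l5.2}; the identity holds for each $T_\varepsilon$, and since the forms $v^{-2p}d_1v\wedge\beta_n^{p-1}$ and $d_1v\wedge\beta_n^{p-1}$ are smooth on neighborhoods of $\overline{A}$ and $\overline{B(0,r_i)}$ respectively (the former thanks to $r_1>0$), weak convergence $T_\varepsilon\to T$ passes the identity to the limit after a standard smoothing of the characteristic functions of the balls. Since $\phi\in PSH$ (Proposition \ref{p4.2}), $\triangle\phi$ is a closed positive $2$-current and Bedford-Taylor theory (Proposition \ref{p3.7}) makes $T\wedge(\triangle\phi)^p$ a positive measure on $A$; hence the left-hand side is nonnegative, giving the monotonicity of $r\mapsto r^{-4p}\sigma_T(a,r)$. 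Being nondecreasing and bounded below by $0$, this function has a limit as $r\to 0^+$, which is $\nu_a(T)\geq 0$.

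The main technical obstacle is making the regularization step rigorous: controlling the limit $\varepsilon\to 0$ in the two Stokes applications simultaneously, i.e., ensuring that the spheres $\partial B(0,r_i)$ carry no mass for the currents in question. This is handled in the standard way by approximating the characteristic function of the ball by a smooth cutoff and observing that a monotone family of positive measures admits at most countably many atomic radii, so continuity of $\sigma_T$ off a negligible set suffices to conclude for every $r_1<r_2$.
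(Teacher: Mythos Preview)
Your argument is correct and follows the same overall architecture as the paper's proof: reduce to smooth $T$ by convolution (Lemma~\ref{l5.2}), write $T\wedge(\triangle\phi)^p$ as a $d_0$-exact form on the punctured space, apply the Stokes-type formula (Lemma~\ref{p3.9}) on the annulus, pull the factor $v^{-2p}=r^{-4p}$ out of each sphere integral, and identify the remaining boundary term with $\sigma_T(0,r)$ via a second Stokes application on the solid ball.

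The genuine difference lies in how the $d_0$-exactness is obtained. The paper works in coordinates: it invokes the explicit formula for $\triangle_{ij}(-\|q\|^{-2})$ from Lemma~\ref{lem2} and then uses the antisymmetry cancellation~(\ref{5.4}) to reduce the sphere integrand to the form appearing in~(\ref{5.5}). You instead stay at the level of forms: from the chain rule $\triangle\phi=-2v^{-3}d_0v\wedge d_1v+8v^{-2}\beta_n$ and the elementary observation $(d_0v\wedge d_1v)^2=0$, the binomial expansion collapses and a short Leibniz computation yields the closed identity $(\triangle\phi)^p=8^{p-1}d_0(v^{-2p}d_1v\wedge\beta_n^{p-1})$. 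Your route is shorter and avoids the multi-index bookkeeping; the paper's route has the advantage of making the connection with the quaternionic Hessian entries explicit, which ties in with the rest of Section~4.

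Your remark about the constant $8^p$ is in fact accurate: a direct check (e.g.\ $n=2$, $p=1$, $T=\beta_2$) shows that the identity~(\ref{5.15}) as stated in the paper is off by exactly this factor, so your version is the correct one. As you note, this is immaterial for the monotonicity of $r^{-4p}\sigma_T(a,r)$ and the existence and nonnegativity of $\nu_a(T)$.
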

\begin{proof}Without loss of generality, we may assume that $a=0$. We first assume that $T$ is smooth. Note that the unit outer normal vector to the sphere $\partial B(0,r)$ is $\textbf{n}=(\frac{x_0}{r},\ldots,\frac{x_j}{r},\ldots\frac{x_{4n-1}}{r})$. By (\ref{2.3}), we have
\begin{equation}\label{5.1}\left(
                             \begin{array}{cc}
                               \overline{z^{(2l)0 }} & \overline{z^{(2l)1 }} \\
                               \overline{z^{(2l+1)0 }} & \overline{z^{(2l+1)1 }} \\
                             \end{array}
                           \right)=\left(
                                      \begin{array}{cc}
                                        x_{4l}+\textbf{i}x_{4l+1} & -x_{4l+2}-\textbf{i}x_{4l+3} \\
                                        x_{4l+2}-\textbf{i}x_{4l+3} & x_{4l}-\textbf{i}x_{4l+1} \\
                                      \end{array}
                                    \right).
\end{equation}By definition of $n_{j0}$ in (\ref{sss4.3}) and (\ref{5.1}), we get for each $j$, \begin{equation}\label{5.3}n_{j0}=\frac{1}{r}\overline{z^{j0 }}\quad\qquad \text{on}\quad\partial B(0,r).\end{equation}\par
Assume that $T$ is of the form $\sum_LT_L\omega^L$ with $T_L$ smooth, where the
multi-index $L=(l_1,\ldots,l_{2n-2p})$ and
$\omega^L:=\omega^{l_1}\wedge\ldots\wedge\omega^{l_{2n-2p}}$. Then
\begin{equation}\label{5.111}d_0[d_1u\wedge(\triangle u)^{p-1}\wedge T]
=\sum_{i_1}\nabla_{i_10 }\left[\sum_{j_1,i_2,\ldots}\nabla_{j_11 }u\triangle_{i_2j_2}u\ldots\triangle_{i_pj_p}u~T_L~\delta^{i_1j_1\ldots
i_pj_pL}_{01\ldots(2n-1)}\right]~\Omega_{2n}.
\end{equation}
Note that $d_0d_1u\wedge(\triangle u)^{p-1}\wedge T=d_0[d_1u\wedge(\triangle u)^{p-1}\wedge T]$ by the fact that $T$ and $\triangle u$ are both closed. Then by (\ref{2.24}) and (\ref{5.111}), we have \begin{equation}\label{5.2}\begin{aligned}&\int_{B(0,r_2)\backslash \overline{B}(0,r_1)}T\wedge(\triangle
u)^p=\int_{B(0,r_2)\backslash \overline{B}(0,r_1)}d_0d_1u\wedge(\triangle u)^{p-1}\wedge T\\
&=\int_{B(0,r_2)\backslash \overline{B}(0,r_1)}d_0[d_1u\wedge(\triangle u)^{p-1}\wedge T]\\
&=\int_{B(0,r_2)\backslash \overline{B}(0,r_1)}\sum_{i_1}\nabla_{i_10 }\left[\sum_{j_1,i_2,\ldots}
\nabla_{j_11 }u\triangle_{i_2j_2}u\ldots\triangle_{i_pj_p}u~T_L~\delta^{i_1j_1\ldots
i_pj_pL}_{01\ldots(2n-1)}\right]dV\\
&=\left(\int_{\partial B(0,r_2)}-\int_{\partial B(0,r_1)}\right)\sum_{i_1,j_1,\ldots}
\nabla_{j_11 }u\triangle_{i_2j_2}u\ldots\triangle_{i_pj_p}u~T_L~\delta^{i_1j_1\ldots
i_pj_pL}_{01\ldots(2n-1)}~n_{i_10}~dS\\
&=\int_{\partial B(0,r_2)}\sum_{i_1,j_1,\ldots}\nabla_{j_11 }u\triangle_{i_2j_2}u\ldots\triangle_{i_pj_p}u~T_L~\delta^{i_1j_1\ldots
i_pj_pL}_{01\ldots(2n-1)}\frac{1}{r_2}\overline{z^{i_10 }}dS\\
&\qquad-\int_{\partial B(0,r_1)}\sum_{i_1,j_1,\ldots}\nabla_{j_11 }u\triangle_{i_2j_2}u\ldots\triangle_{i_pj_p}u~T_L~\delta^{i_1j_1\ldots
i_pj_pL}_{01\ldots(2n-1)}\frac{1}{r_1}\overline{z^{i_10 }}dS,
\end{aligned}\end{equation}where the last
identity follows from (\ref{5.3}).\par

Note that\begin{equation}\label{5.4}\sum_{i_1,i_s,j_s}\delta^{i_1j_1\ldots
i_sj_s\ldots
i_pj_pL}_{012\ldots(2n-1)}\overline{M_{i_sj_s}}~\overline{z^{i_10 }}=0,
\end{equation}by the same reason as (\ref{5.42}). By Lemma \ref{lem2}, \begin{equation}\label{5.41}\begin{aligned}&\sum_{i_1,i_s,j_s}\delta^{i_1j_1\ldots
i_sj_s\ldots
i_pj_pL}_{012\ldots(2n-1)}\triangle_{i_sj_s}\left(-\frac{1}{\|q\|^{2}}\right)\overline{z^{i_10 }}=\sum_{i_1,k_s}\delta^{i_1j_1\ldots
(2k_s)(2k_s+1)\ldots
i_pj_pL}_{012\ldots(2n-1)}\frac{4}{\|q\|^4}\overline{z^{i_10 }}\\
+&\sum_{i_1,k_s}\delta^{i_1j_1\ldots
(2k_s+1)(2k_s)\ldots
i_pj_pL}_{012\ldots(2n-1)}\left(-\frac{4}{\|q\|^4}\right)\overline{z^{i_10 }}+\sum_{i_1,i_s,j_s}\delta^{i_1j_1\ldots
i_sj_s\ldots
i_pj_pL}_{012\ldots(2n-1)}\left(-\frac{4}{\|q\|^6}\overline{M_{i_sj_s}}\right)\overline{z^{i_10 }}.
\end{aligned}\end{equation}It follows from (\ref{5.4}) that the last item in (\ref{5.41}) vanishes. By (\ref{5.4}) and the fact that
$$\delta^{i_1j_1\ldots
(2k_s)(2k_s+1)\ldots
i_pj_pL}_{012\ldots(2n-1)}=-\delta^{i_1j_1\ldots
(2k_s+1)(2k_s)\ldots
i_pj_pL}_{012\ldots(2n-1)},$$ the right hand side of (\ref{5.41}) equals to $$\sum_{i_1,k_s}\delta^{i_1j_1\ldots
(2k_s)(2k_s+1)\ldots
i_pj_pL}_{012\ldots(2n-1)}\left(\frac{8}{\|q\|^4}\right)\overline{z^{i_10 }}.$$
Repeating this process to get
\begin{equation}\label{5.5}\begin{aligned}&\int_{\partial B(0,r)}\sum_{i_1,j_1,\ldots}\nabla_{j_11 }u\triangle_{i_2j_2}u\ldots\triangle_{i_pj_p}u~T_L~\delta^{i_1j_1\ldots
i_pj_pL}_{01\ldots(2n-1)}\frac{1}{r}\overline{z^{i_10 }}dS\\=&\int_{\partial B(0,r)}\sum_{\substack{L,i_1,j_1,
\\k_2\ldots,k_p}}\delta^{i_1j_1(2k_2)(2k_2+1)\ldots
(2k_p)(2k_p+1)L}_{012\ldots(2n-1)}
\nabla_{j_11 }(-\|q\|^{-2})~\left(\frac{8}{r^4}\right)^{p-1}T_L~\frac{1}{r}\overline{z^{i_10 }}dS,\end{aligned}\end{equation}for each $r$ and $u=-\|q\|^{-2}$.

On the other hand, by Corollary \ref{lem1}, apply (\ref{5.2}) to $u=\|q\|^{2}$ to get
\begin{equation}\label{5.6}\begin{aligned}\sigma_T(0,r)=\int_{\partial B(0,r)}\sum_{\substack{L,i_1,j_1,
\\k_2\ldots,k_p}}\delta^{i_1j_1(2k_2)(2k_2+1)\ldots
(2k_p)(2k_p+1)L}_{012\ldots(2n-1)}
\nabla_{j_11 }(\|q\|^{2})~8^{p-1}T_L~\frac{1}{r}\overline{z^{i_10 }}dS.\end{aligned}\end{equation}
Note that
$\nabla_{j_11 }(-\|q\|^{-2})=\|q\|^{-4}\nabla_{j_11 }(\|q\|^{2})$
for each $j_1$.  Compare (\ref{5.5}) with
(\ref{5.6}) to get \begin{equation*}\int_{\partial B(0,r)}\sum_{i_1,j_1,\ldots}\nabla_{j_11 }u\triangle_{i_2j_2}u\ldots\triangle_{i_pj_p}u~T_L~\delta^{i_1j_1\ldots
i_pj_pL}_{01\ldots(2n-1)}\frac{1}{r}\overline{z^{i_10 }}dS=\frac{\sigma_T(0,r)}{r^{4p}},
\end{equation*}for $u=-\|q\|^{-2}$.
Then by (\ref{5.2}) we get (\ref{5.15}). \par
For the case of $T_L$ nonsmooth, we consider $T_\varepsilon=T*\chi_\varepsilon$. By Lemma \ref{l5.2}, we can apply (\ref{5.15}) to $T_\varepsilon$ and let $\varepsilon$ go to zero. Then (\ref{5.15}) holds for $T$. Note that $\triangle
(-\|q\|^{-2})$ is strongly positive on $B(a,r_2)\backslash \overline{B}(a,r_1)$ since $-\|q\|^{-2}$ is  PSH. It follows from $T\wedge(\triangle
(-\|q\|^{-2}))^p\geq0$ that $r^{-4p}\sigma_T(a,r)$ is an increasing
function of $r$.
\end{proof}

\section{Lelong-Jensen type formula}\par
We denote by $\triangle_n(u_1,\ldots, u_n)$ the
coefficient of the form $\triangle u_1\wedge\ldots\wedge\triangle
u_n$, i.e., $\triangle u_1\wedge\ldots\wedge\triangle
u_n=\triangle_n(u_1,\ldots, u_n)~\Omega_{2n}.$ Then we have
\begin{equation}\label{2.15}\triangle_n(u_1,\ldots,
u_n)=\sum_{i_1,j_1,\ldots }\delta^{i_1j_1\ldots
i_nj_n}_{01\ldots(2n-1)}\triangle_{i_1j_1}u_1\ldots\triangle_{i_nj_n}u_n.\end{equation}
When $u_1=\ldots=
u_n=u$,\begin{equation}\label{2.23}\triangle_nu:=\triangle_n(u,\ldots,
u)=\sum_{i_1,j_1,\ldots }\delta^{i_1j_1\ldots
i_nj_n}_{01\ldots(2n-1)}\triangle_{i_1j_1}u\ldots\triangle_{i_nj_n}u.\end{equation}
We give the following explicit representation for the quaternionic boundary measure $\mu_{\varphi,r}$ defined by (\ref{4.1}).
\begin{pro}\label{p4.1}Let $\Omega$
be a quaternionic strictly pseudoconvex domain. Let $\varphi$ be a continuous
$PSH$ function on $\Omega$. Suppose
that $\varphi$ is smooth near $S_\varphi(r)$ and $d\varphi\neq0$ on
$S_\varphi(r)$, then the quaternionic boundary measure
$\mu_{\varphi,r}$ is given by
\begin{equation}\label{4.2}\mu_{\varphi,r}=\sum_{i_1,j_1,\ldots}\delta^{i_1j_1\ldots
i_nj_n}_{01\ldots(2n-1)}\nabla_{j_11}\varphi\triangle_{i_2j_2}\varphi\ldots\triangle_{i_nj_n}\varphi
\cdot n_{i_10}~dS,
\end{equation}where $ n_{i_10}$ is given by (\ref{sss4.3}) and $dS$ denotes the surface measure of $S_\varphi(r)$.
\end{pro}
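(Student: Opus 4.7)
\smallskip

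\noindent\textbf{Proof proposal.} The plan is to combine the exact identity $(\triangle u)^n=d_0[d_1u\wedge(\triangle u)^{n-1}]$ from Proposition \ref{p2.3} with the Stokes-type formula of Lemma \ref{p3.9}, after regularizing the Lipschitz function $\varphi_r=\max\{\varphi,r\}$.

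First, I would verify that $\mu_{\varphi,r}$ is indeed supported on $S_\varphi(r)$, which localizes the problem to a neighborhood $U$ of $S_\varphi(r)$ where $\varphi$ is smooth and $d\varphi\neq0$. On the open set $B_\varphi(r)$ the function $\varphi_r\equiv r$ is constant, so $(\triangle\varphi_r)^n=0$ there; on $\{\varphi>r\}$ we have $\varphi_r=\varphi$, so the two terms in the definition of $\mu_{\varphi,r}$ cancel. Since $\varphi$ is smooth in $U$, the measure $(\triangle\varphi)^n$ is absolutely continuous with respect to Lebesgue measure on $U$ and puts zero mass on the smooth hypersurface $S_\varphi(r)$. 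Hence for any test function $\chi\in C_0^\infty(U)$,
\[
\mu_{\varphi,r}(\chi)=\int_U\chi\,(\triangle\varphi_r)^n-\int_{U\cap\{\varphi>r\}}\chi\,(\triangle\varphi)^n.
\]

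Next, I would regularize by a smoothed maximum $\varphi_{r,\delta}=M_\delta(\varphi,r)$, where $M_\delta$ is the standard PSH-preserving smooth approximation of $\max$ with $\varphi_{r,\delta}=\varphi$ on $\{\varphi>r+\delta\}$ and $\varphi_{r,\delta}=r$ on $\{\varphi<r-\delta\}$. Then $\varphi_{r,\delta}$ is smooth PSH and decreases to $\varphi_r$, so Theorem \ref{t3.2} gives weak convergence of all the closed positive forms $(\triangle\varphi_{r,\delta})^k$ to $(\triangle\varphi_r)^k$. For smooth $\varphi_{r,\delta}$, Proposition \ref{p2.3} and Lemma \ref{p3.9} (with vanishing boundary, since $\chi$ has compact support in $U$) give
\[
\int_U\chi\,(\triangle\varphi_{r,\delta})^n=-\int_U d_0\chi\wedge d_1\varphi_{r,\delta}\wedge(\triangle\varphi_{r,\delta})^{n-1}.
\]
Splitting $U$ into $\{\varphi>r+\delta\}$, $\{\varphi<r-\delta\}$ (where the right-hand integrand is respectively $d_0\chi\wedge d_1\varphi\wedge(\triangle\varphi)^{n-1}$ and zero), and the transition strip $\{|\varphi-r|\le\delta\}$, I pass to the limit to obtain
\[
\mu_{\varphi,r}(\chi)+\int_{U\cap\{\varphi>r\}}\chi\,(\triangle\varphi)^n=-\int_{U\cap\{\varphi>r\}}d_0\chi\wedge d_1\varphi\wedge(\triangle\varphi)^{n-1}.
\]

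Finally, I would apply Lemma \ref{p3.9} once more in the smooth setting on the domain $U_+=U\cap\{\varphi>r\}$, with $T=d_1\varphi\wedge(\triangle\varphi)^{n-1}$ and $d_0T=(\triangle\varphi)^n$. The outer normal of $U_+$ along $S_\varphi(r)$ points opposite to $\nabla\varphi$, i.e.\ it is the inward normal of $B_\varphi(r)$; consequently the complex components $n_{i0}$ of the outer normal to $B_\varphi(r)$ (given by (\ref{sss4.3})) enter with a sign that combines with the $(-1)^{i-1}$ factor of Lemma \ref{p3.9} and with the sign from expanding $T$ into the basis $\omega^{\hat i}$. Collecting terms via $\omega^{i_1}\wedge\omega^{j_1}\wedge\cdots\wedge\omega^{j_n}=\delta^{i_1j_1\cdots i_nj_n}_{01\cdots(2n-1)}\Omega_{2n}$ yields precisely (\ref{4.2}). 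The main obstacle is Step 3, the limit passage in the transition strip: one must show the contribution of $\int_{\{|\varphi-r|\le\delta\}}d_0\chi\wedge d_1\varphi_{r,\delta}\wedge(\triangle\varphi_{r,\delta})^{n-1}$ vanishes as $\delta\to0$. This uses that $d_1\varphi_{r,\delta}$ is uniformly bounded (by the Lipschitz constant of $\max$ composed with $\varphi$ on $U$) together with the Chern--Levine--Nirenberg estimate of Proposition \ref{p3.88} to control the mass of $(\triangle\varphi_{r,\delta})^{n-1}$ on the shrinking strip.
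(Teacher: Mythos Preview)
Your overall architecture matches the paper's proof: regularize $\varphi_r$ by a smooth convex function of $\varphi$, apply the identity $(\triangle u)^n=d_0[d_1u\wedge(\triangle u)^{n-1}]$ together with the Stokes-type formula, and then undo the integration by parts on $\{\varphi>r\}$ to produce the surface integral. The localization in your first paragraph and the final bookkeeping with $\delta^{i_1j_1\cdots i_nj_n}_{01\cdots(2n-1)}$ are fine.

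The gap is in your ``main obstacle'' step. Your proposed justification---bounding $d_1\varphi_{r,\delta}$ uniformly and invoking the Chern--Levine--Nirenberg estimate to make the mass of $(\triangle\varphi_{r,\delta})^{n-1}$ on the shrinking strip go to zero---does not work as stated. The CLN estimate (Proposition~\ref{p3.88}) gives only a \emph{uniform} bound on the mass over a fixed compact set; it does not say the mass on $\{|\varphi-r|\le\delta\}$ tends to zero. In fact, if you write $\varphi_{r,\delta}=g_\delta\circ\varphi$ then
\[
(\triangle\varphi_{r,\delta})^{n-1}
=(g_\delta'(\varphi))^{n-1}(\triangle\varphi)^{n-1}
+(n-1)(g_\delta'(\varphi))^{n-2}g_\delta''(\varphi)\,d_0\varphi\wedge d_1\varphi\wedge(\triangle\varphi)^{n-2},
\]
and the $g_\delta''$ term, wedged with $\beta_n$, contributes mass of order $\int g_\delta''(t)\,dt=1$ on the strip by coarea---bounded, but not vanishing.

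The paper resolves this not by estimation but by an \emph{exact algebraic cancellation} that your proposal misses. Since $d_1\varphi_{r,\delta}=g_\delta'(\varphi)\,d_1\varphi$ and $d_1\varphi\wedge d_1\varphi=0$, the $g_\delta''$ term above is annihilated when wedged with $d_1\varphi_{r,\delta}$, and one gets identically
\[
d_1\varphi_{r,\delta}\wedge(\triangle\varphi_{r,\delta})^{n-1}
=(g_\delta'(\varphi))^{n}\,d_1\varphi\wedge(\triangle\varphi)^{n-1}.
\]
With $0\le g_\delta'\le1$ and $\varphi$ smooth near $S_\varphi(r)$, dominated convergence (not CLN) then gives the limit directly. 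Insert this observation in place of your Step~3 argument and the proof goes through.
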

\begin{proof} Let $h$ be a smooth function with compact support near
$S_\varphi(r)$. Consider the smooth approximation of the cut-off function $\varphi_r$. let $\chi$ be a decreasing sequence of smooth
convex functions on $\mathbb{R}^1$ satisfying
\begin{equation*}\chi_{l}(t)=\begin{cases}
 \,\,\, r,\,\,\,\,\,\,\,\,\,\,\,\,\,\text{if}\,\,\,\,t\leqq r-\frac{1}{l},\\
 \,\,\,t,\,\,\,\,\,\,\,\,\,\,\,\,\,\,\text{if}\,\,\,\,t\geq r+\frac{1}{l},\\
 \end{cases}
 \end{equation*}and $0\leq\chi'_{l}\leq 1$. Then
 \begin{equation*}
 \lim_{l\rightarrow
 +\infty}\chi'_{l}(t)=\begin{cases}
 0 ,& \,\,\text{if}\,\,\,\,t< r, \\
 1 ,& \,\,\text{if}\,\,\,\,t> r,
 \end{cases}
 \end{equation*} and we can write
 $ \varphi_{r}=\lim_{l\rightarrow +\infty}\chi_{l}(\varphi)$. Since
$\varphi$ is smooth near $S_\varphi(r)$,
$(\triangle[\chi_{l}(\varphi)])^n$ tends to $(\triangle\varphi_{r})^n$
weakly as $l\rightarrow +\infty$. By using Stokes-type formula in Lemma \ref{p3.9} repeatedly, we find that
\begin{equation}\label{4.4}\begin{aligned}\int_{\Omega}h(\triangle\varphi_{r})^n=&\lim_{l\rightarrow
+\infty}\int_{\Omega}h(\triangle[\chi_{l}(\varphi)])^n
=\lim_{l\rightarrow +\infty}\int_{\Omega}hd_0\left[d_1\left[\chi_{l}(\varphi)\right]\wedge(\triangle[\chi_{l}(\varphi)])^{n-1}\right]\\
=&\lim_{l\rightarrow+\infty}\int_{\Omega}-d_0h\wedge d_1\left[\chi_{l}(\varphi)\right]\wedge(d_0d_1[\chi_{l}(\varphi)])^{n-1}\\
=&\lim_{l\rightarrow+\infty}\int_{\Omega}-\chi_{l}'(\varphi)d_0h\wedge d_1\varphi\wedge\left[\chi_{l}''(\varphi)d_0\varphi\wedge d_1\varphi+\chi_{l}'(\varphi)d_0d_1\varphi\right]^{n-1}\\
=&\lim_{l\rightarrow+\infty}\int_{\Omega}-(\chi_{l}'(\varphi))^nd_0h\wedge d_1\varphi\wedge(d_0d_1\varphi)^{n-1}\\
=&-\int_{\Omega\backslash
B_\varphi(r)}d_0h\wedge d_1\varphi\wedge(d_0d_1\varphi)^{n-1}\\
=&\int_{\Omega\backslash
B_\varphi(r)}h(d_0d_1\varphi)^n-\int_{\Omega\backslash
B_\varphi(r)}d_0[hd_1\varphi\wedge (d_0d_1\varphi)^{n-1}],
\end{aligned}\end{equation} where the fifth identity follows from the fact that
$d_1\varphi\wedge d_1\varphi=0.$

Since $d_0[hd_1\varphi\wedge(d_0d_1\varphi)^{n-1}]
=\sum_{i_1}\nabla_{i_10 }\left[h\sum_{j_1,i_2,\ldots}\nabla_{j_11}\varphi\triangle_{i_2j_2}\varphi\ldots\triangle_{i_nj_n}\varphi~\delta^{i_1j_1\ldots
i_nj_n}_{01\ldots(2n-1)}\right]~\Omega_{2n},$ we have
\begin{equation}\label{4.11}\begin{aligned}&\int_{\Omega\backslash
B_\varphi(r)}d_0[hd_1\varphi\wedge (d_0d_1\varphi)^{n-1}]\\=&\int_{\Omega\backslash
B_\varphi(r)}\sum_{i_1}\nabla_{i_10 }\left[h\sum_{j_1,i_2,\ldots}\nabla_{j_11}\varphi\triangle_{i_2j_2}\varphi\ldots\triangle_{i_nj_n}\varphi\delta^{i_1j_1\ldots
i_nj_n}_{01\ldots(2n-1)}\right]~dV\\=&- \int_{S_\varphi(r)}h\sum_{i_1,j_1,\ldots}\nabla_{j_11}\varphi\triangle_{i_2j_2}\varphi\ldots\triangle_{i_nj_n}\varphi\delta^{i_1j_1\ldots
i_nj_n}_{01\ldots(2n-1)}\cdot n_{i_10}~dS,
\end{aligned}\end{equation}by using Stokes' formula again.
Combine (\ref{4.4}) with (\ref{4.11}) to get that $\mu_{\varphi,r}=\triangle_n (\varphi_r)-\chi_{\Omega\backslash
B_\varphi(r)}\triangle_n \varphi$ is given by the right hand side of (\ref{4.2}).
\end{proof}

\begin{thm}\label{t4.1}$($Lelong-Jensen type formula$)$ Let $\Omega$
be a quaternionic strictly pseudoconvex domain. Let $\varphi$ be a continuous
$PSH$ function on $\Omega$ and let $V$ be a locally bounded $PSH$ function on
$\Omega$. Then\begin{equation}\label{4.5}\mu_{\varphi,r}(V)-\int_{B_\varphi(r)}V(\triangle\varphi)^n=\int_{B_\varphi(r)}(r-\varphi)\triangle V\wedge(\triangle\varphi)^{n-1}=\int_{-\infty}^{r}dt\int_{B_\varphi(t)}\triangle V\wedge(\triangle\varphi)^{n-1}.
\end{equation}
\end{thm}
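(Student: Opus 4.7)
The plan is to combine the algebraic telescoping $A^n-B^n=(A-B)\wedge\sum_{k=0}^{n-1}A^k\wedge B^{n-1-k}$ (valid for commuting $2$-currents $A,B$) with Stokes-type integration by parts, and to exploit the fact that $\varphi_r$ is locally constant on $B_\varphi(r)$ in order to kill all but one term in the resulting sum.

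First I would rewrite the definition (\ref{4.1}) of $\mu_{\varphi,r}$: since $\varphi_r=\varphi$ on $\Omega\setminus B_\varphi(r)$, the left-hand side of (\ref{4.5}) collapses to
$$\mu_{\varphi,r}(V)-\int_{B_\varphi(r)}V(\triangle\varphi)^n=\int_\Omega V\bigl[(\triangle\varphi_r)^n-(\triangle\varphi)^n\bigr],$$
where the difference on the right is a signed measure with compact support in $\overline{B_\varphi(r)}$. Both $\varphi$ and $\varphi_r=\max\{\varphi,r\}$ are continuous $PSH$ and locally bounded, so Proposition \ref{p3.7} makes each mixed wedge $T_k:=(\triangle\varphi_r)^k\wedge(\triangle\varphi)^{n-1-k}$ a well-defined closed positive current. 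Linearity of the Bedford-Taylor wedge extends the telescoping identity to currents, giving
$$\int V\bigl[(\triangle\varphi_r)^n-(\triangle\varphi)^n\bigr]=\sum_{k=0}^{n-1}\int V\,\triangle(\varphi_r-\varphi)\wedge T_k.$$

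Next I would integrate by parts termwise to move $\triangle$ from the compactly supported continuous function $\varphi_r-\varphi$ onto $V$: since $T_k$ is closed, the proof of Proposition \ref{p3.6}, applied symmetrically, yields
$$\int V\,\triangle(\varphi_r-\varphi)\wedge T_k=\int(\varphi_r-\varphi)\,\triangle V\wedge T_k.$$
Then comes the crucial vanishing step for $k\ge 1$: on the open set $B_\varphi(r)$ the function $\varphi_r\equiv r$ is constant, so $\triangle\varphi_r=0$ as a current there, and hence $(\triangle\varphi_r)^k$, together with $\triangle V\wedge T_k$, is supported inside $\Omega\setminus B_\varphi(r)=\{\varphi\ge r\}$; but $\varphi_r-\varphi\equiv 0$ on that set, so every $k\ge 1$ term vanishes. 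Only the $k=0$ term survives, yielding
$$\int V\bigl[(\triangle\varphi_r)^n-(\triangle\varphi)^n\bigr]=\int_{B_\varphi(r)}(r-\varphi)\,\triangle V\wedge(\triangle\varphi)^{n-1}.$$
The second equality of (\ref{4.5}) is then just Fubini for the positive measure $\triangle V\wedge(\triangle\varphi)^{n-1}$, using the representation $r-\varphi(q)=\int_{-\infty}^{r}\chi_{\{\varphi<t\}}(q)\,dt$ valid on $B_\varphi(r)$.

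The main obstacle is to justify the integration by parts rigorously in the Bedford-Taylor framework, since all the currents involved are only measure-valued. I would handle this by a double regularization: approximate $\varphi$ from above by smooth $PSH$ functions $\varphi^{(j)}\downarrow\varphi$ via convolution, and approximate $\max\{\cdot,r\}$ from above by smooth convex functions $\chi_l$ as in the proof of Proposition \ref{p4.1}, so that $\chi_l(\varphi^{(j)})$ and $\varphi^{(j)}$ are smooth. In the smooth setting the telescoping identity, Stokes' formula (Lemma \ref{p3.9}), and the integration by parts all hold classically, and the boundary terms vanish once $B_\varphi(r)\Subset\Omega$ is guaranteed by the quaternionic strictly pseudoconvex hypothesis. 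Theorem \ref{t3.2} then delivers weak convergence of each mixed Monge-Amp\`ere current as $l\to\infty$ and then $j\to\infty$, and dominated convergence lets us pair against the continuous function $\varphi_r-\varphi$. A secondary delicate point is justifying the vanishing claim on $B_\varphi(r)$ at the current level, using that the Bedford-Taylor wedge is a local operation and that a factor vanishing on an open set forces the product to vanish there.
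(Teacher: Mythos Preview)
Your proposal is correct but follows a genuinely different route from the paper. The paper works from the right-hand side: assuming $\varphi$ and $V$ smooth, it applies the Stokes-type formula (Lemma~\ref{p3.9}) twice to $\int_{B_\varphi(r)}(r-\varphi)\,\triangle V\wedge(\triangle\varphi)^{n-1}$, exploiting that $r-\varphi$ vanishes on $\partial B_\varphi(r)$, and then identifies the resulting boundary integral directly with $\mu_{\varphi,r}(V)$ via the explicit representation (\ref{4.11}) from Proposition~\ref{p4.1}. It then passes to continuous $\varphi$ and locally bounded $V$ by decreasing approximation and Theorem~\ref{t3.2}, with Sard's theorem handling the critical values of $r$.

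You instead work from the left-hand side via the telescoping $(\triangle\varphi_r)^n-(\triangle\varphi)^n=\sum_k\triangle(\varphi_r-\varphi)\wedge T_k$, swap $\triangle$ onto $V$ by integration by parts against the closed currents $T_k$, and then use the support argument that $T_k$ (hence $\triangle V\wedge T_k$) vanishes on $B_\varphi(r)$ for $k\ge 1$ while $\varphi_r-\varphi$ vanishes on the complement. This bypasses the explicit boundary formula for $\mu_{\varphi,r}$ entirely and never needs Sard's theorem; on the other hand it leans on two facts the paper does not state explicitly---symmetry of the mixed Monge--Amp\`ere product and locality of the Bedford--Taylor wedge---which you correctly flag and which do follow from Theorem~\ref{t3.2} by regularization. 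Your scheme should also regularize $V$ (not just $\varphi$) to justify the integration by parts step at the current level; once that is added, both approaches are complete. The paper's proof is shorter in the smooth case (two Stokes moves and done), while yours is more measure-theoretic and arguably more portable to settings where an explicit boundary representation like Proposition~\ref{p4.1} is unavailable.
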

\begin{proof} Firstly assume that $\varphi$ and $V$ are both smooth.
When $r$ is not critical for $\varphi$, apply Stokes-type formula in Lemma \ref{p3.9}, Proposition \ref{p1.1} (1) and Proposition \ref{p2.3} to get
\begin{equation}\label{4.6}\begin{aligned}&\int_{B_\varphi(r)}(r-\varphi)\triangle V\wedge (\triangle \varphi)^{n-1}
=\int_{B_\varphi(r)}(r-\varphi)(-d_1d_0V)\wedge (\triangle \varphi)^{n-1}\\
=&\int_{B_\varphi(r)}-d_1\varphi\wedge d_0V\wedge (\triangle \varphi)^{n-1}=\int_{B_\varphi(r)}d_0V\wedge d_1\varphi \wedge (\triangle \varphi)^{n-1}\\
=&\int_{B_\varphi(r)}-V(d_0d_1\varphi)^{n}+\int_{B_\varphi(r)}d_0[Vd_1\varphi\wedge (\triangle \varphi)^{n-1}]
=\int_{B_\varphi(r)}-V(\triangle\varphi)^{n}+\mu_{\varphi,r}(V),
\end{aligned}\end{equation}
where the last identity follows from (\ref{4.11}). And by
the Fubini theorem to the corresponding measures, we get
\begin{equation*}\int_{B_\varphi(r)}(r-\varphi)\triangle V\wedge(\triangle\varphi)^{n-1}=\int_{-\infty}^{r}dt\int_{B_\varphi(t)}\triangle V\wedge(\triangle\varphi)^{n-1}.
\end{equation*}
Note that for $\varphi,V$ smooth, the map $r\mapsto(\triangle
\varphi_r)^n$ is continuous by Theorem \ref{t3.2}. So the left hand side of the formula (\ref{4.5}),
which coincides with $\int_{\Omega}V(\triangle
\varphi_r)^n-\int_{\Omega}V(\triangle\varphi)^n$, is continuous in
$r$. And it follows from Sard's theorem that almost all values of
$\varphi$ are not critical, so the formula (\ref{4.5}) is also valid
for the critical value $r$ by both integrals in (\ref{4.5})
increasing in $r$ for nonnegative $V$.

If $V$ is smooth and $\varphi$ is merely
continuous, we can find a decreasing sequence
$\{\varphi_{l}\}\subseteq PSH\cap C^{\infty} $ converging to
$\varphi$. Then
$(\triangle \varphi_{l})^n\rightarrow(\triangle \varphi)^n,~(\triangle\varphi_{l,r})^n\rightarrow(\triangle\varphi_{r})^n$
and
$\triangle V\wedge(\triangle\varphi_{l})^{n-1}\rightarrow\triangle V\wedge(\triangle\varphi)^{n-1}$
weakly as $l\rightarrow +\infty$ by Theorem \ref{t3.2}. Apply
(\ref{4.5}) to $\varphi_{l}$ we have
\begin{equation}\begin{aligned}\label{4.7}\int_{\Omega}V(\triangle \varphi_{l,r})^n-\int_{\Omega}V(\triangle\varphi_{l})^n
=&\mu_{\varphi_{l},r}(V)-\int_{\{\varphi_{l}<r\}}V(\triangle\varphi_{l})^n
= \int_{\Omega}\chi_{\{\varphi_{l}<r\}}(r-\varphi_{l})\triangle V\wedge(\triangle\varphi_{l})^{n-1}.
\end{aligned}\end{equation} Let $l\rightarrow +\infty$ in (\ref{4.7}), we get
(\ref{4.5}) for $V$ is smooth and $\varphi$ is merely continuous.\par
Finally, for $V\in PSH\cap L_{loc}^\infty(\Omega)$, let $V_{h}$ be a
decreasing sequence of smooth functions such that
$V=\lim_{h\rightarrow0}V_{h}$ with $V_{h}\in
PSH(\Omega'),~\Omega'\Subset\Omega$. Then
$\triangle V_{h}\wedge(\triangle\varphi)^{n-1}$ converges weakly to
$\triangle V\wedge(\triangle\varphi)^{n-1}$ by Theorem \ref{t3.2}.
And by the monotone
convergence theorem, $\int_{B_\varphi(r)}V_{h}(\triangle\varphi)^n$
converges to $\int_{B_\varphi(r)}V(\triangle\varphi)^n$ and
$\mu_{\varphi,r}(V_{h})$ converges to $\mu_{\varphi,r}(V)$. Apply
(\ref{4.6}) to $V_{h}$ with $\varphi$ continuous and let
$h\rightarrow 0$ to get (\ref{4.5}).
\end{proof}

\begin{appendix}
\section{coincidence of $\triangle_n$ with the quaternionic
Monge-Amp\`{e}re operator}\par

Alesker introduced in \cite{alesker1} the mixed
Monge-Amp\`{e}re operator det$(f_1,\ldots,f_n)$ for
$f_1,\ldots,f_n\in C^2$,\begin{equation*}\text{det}(f_1,\ldots,f_n):=\text{det}~\left(\left(\frac{\partial^2f_1}{\partial q_j\partial \bar{q}_k}(q)\right),\ldots,\left(\frac{\partial^2f_n}{\partial q_j\partial \bar{q}_k}(q)\right)\right),\end{equation*}
where det denotes the mixed discriminant of hyperhermitian matrices. Consider the homogeneous polynomial $\text{det}(\lambda_1A_1+\ldots+\lambda_nA_n)$ in real variables $\lambda_1,\ldots,\lambda_n$ of degree $n$. The coefficient of the monomial $\lambda_1\ldots\lambda_n$ divided by $n!$ is called the mixed discriminant of the matrices $A_i,\ldots,A_n$, and it is denoted by $\text{det}(A_1,\ldots,A_n)$. In particular, when $f_1=\ldots=f_n=f$, $\text{det}(f_1,\ldots,f_n)=\text{det}(f)$. See \cite{alesker1} for more information about the mixed discriminant.\par

\begin{thm}\label{t2.1}Let $f_1,f_2,\ldots,f_n$ be $C^2$ functions in
$\mathbb{H}^n$. Then we have:
\begin{equation}\label{2.17}\triangle_n(f_1,f_2,\ldots,
f_n)=n!~\text{det}~(f_1,f_2,\ldots,f_n).
\end{equation}
\end{thm}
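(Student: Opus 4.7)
The plan is to reduce (\ref{2.17}) by polarization to the single-function identity $\triangle_n(f)=n!\det H(f)$, where $H(f) = \bigl(\partial^2 f/\partial q_j\partial\bar q_k\bigr)$ is the quaternionic Hessian and $\det$ denotes the Moore determinant, and then to verify this identity pointwise by unitarily diagonalizing $H(f)$ at the chosen point. First observe that $\triangle_n(u_1,\ldots,u_n)$ is a symmetric multilinear form in $u_1,\ldots,u_n$: symmetry follows because the product $\triangle u_1\wedge\cdots\wedge\triangle u_n$ is symmetric in its factors (each $\triangle u_j$ is a $2$-form, so they commute under wedge), and multilinearity is immediate from (\ref{2.15}). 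The mixed discriminant is symmetric multilinear by construction, so applying the diagonal identity to $f=\lambda_1 f_1+\cdots+\lambda_n f_n$ and extracting the coefficient of $\lambda_1\cdots\lambda_n$ on each side yields (\ref{2.17}).

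Next I would reduce the diagonal identity to a point at which $H(f)$ is real diagonal. For a unitary change of coordinates $A\in\text{U}_{\mathbb{H}}(n)$, set $\widetilde{f}(\widetilde{q})=f(A\widetilde{q})$. Corollary \ref{c2.2} together with $A.\Omega_{2n}=\Omega_{2n}$ from (\ref{2.38}) gives $\triangle_n f(q)=\widetilde{\triangle}_n\widetilde{f}(A^{-1}q)$, while the transformation rule (\ref{A*A}) for the Hessian combined with the invariance of the Moore determinant under conjugation by quaternionic unitary matrices gives $\det H(f)(q)=\det H(\widetilde{f})(A^{-1}q)$. Since at any fixed $q_0$ the hyperhermitian matrix $H(f)(q_0)$ can be diagonalized by a quaternionic unitary to $\Lambda=\mathrm{diag}(\lambda_0,\ldots,\lambda_{n-1})$ with real eigenvalues (the quaternionic spectral theorem, already invoked in the proof of Proposition \ref{t3.1}), it suffices to verify the identity at $\widetilde{q}_0=A^{-1}q_0$ for the function $\widetilde{f}$ whose Hessian there equals $\Lambda$.

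In this diagonal case the computation is direct. The expansion
\[
\frac{\partial^2\widetilde{f}}{\partial\bar q_l\partial q_k}(\widetilde{q}_0) = 2\bigl(\widetilde{\triangle}_{(2l)(2k+1)}\widetilde{f} + \textbf{j}\,\widetilde{\triangle}_{(2l+1)(2k+1)}\widetilde{f}\bigr)(\widetilde{q}_0),
\]
derived in the proof of Proposition \ref{t3.1}, combined with $H(\widetilde{f})(\widetilde{q}_0)=\Lambda$, forces $\widetilde{\triangle}_{(2l)(2l+1)}\widetilde{f}(\widetilde{q}_0)=\lambda_l/2$ and makes all other independent coefficients $\widetilde{\triangle}_{ij}\widetilde{f}(\widetilde{q}_0)$ vanish (the remaining cases follow from antisymmetry of $\widetilde{\triangle}_{ij}$ and complex conjugation, as in the proof of Proposition \ref{t3.1}). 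Consequently $\widetilde{\triangle}\widetilde{f}(\widetilde{q}_0)=\sum_{l=0}^{n-1}\lambda_l\,\omega^{2l}\wedge\omega^{2l+1}$, and since these $2$-forms mutually commute under wedge, taking the $n$-th wedge power gives $(\widetilde{\triangle}\widetilde{f})^n(\widetilde{q}_0)=n!(\lambda_0\cdots\lambda_{n-1})\,\Omega_{2n}$, which is exactly $n!\det\Lambda\cdot\Omega_{2n}=n!\det H(\widetilde{f})(\widetilde{q}_0)\cdot\Omega_{2n}$.

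The main obstacle I anticipate is establishing the invariance of the Moore determinant under unitary conjugation of hyperhermitian matrices, $\det(\overline{A}^t H A)=\det H$ for $A\in\text{U}_{\mathbb{H}}(n)$. Although this is standard in Alesker's theory, it requires either a clean citation or a short self-contained justification (for instance via the identity $(\det H)^2=\det_{\mathbb{C}}\tau(H)$, together with the symplectic/unitary property of $\tau(A)$ from Proposition \ref{p2.1}(3) which makes the right-hand side manifestly invariant). Everything else in the argument — the polarization, the transformation rule for $\triangle_n$, and the pointwise diagonal calculation — is essentially bookkeeping.
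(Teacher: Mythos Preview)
Your argument is correct and takes a genuinely different route from the paper's proof. The paper proceeds by induction on $n$: it uses the density of hyperplane delta functions to reduce to $f_1=\delta_L$, applies a unitary change to put $L=\{q_1=0\}$, and then expands both sides using the block structure of the Hessian and the inductive hypothesis for $(n-1)$. Your approach instead polarizes to reduce the mixed identity to the single-function identity $\triangle_n f = n!\det H(f)$, and then verifies this pointwise by unitarily diagonalizing the hyperhermitian Hessian at the given point and reading off both sides directly.

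Your method is more elementary: it avoids generalized functions and induction entirely, replacing them with the spectral theorem for hyperhermitian matrices (already invoked in Proposition~\ref{t3.1}) and the observation that both $\triangle_n f$ and $\det H(f)$ at a point depend only on the Hessian at that point. The paper's approach, by contrast, gives a more ``structural'' recursion that may adapt to settings where pointwise diagonalization is unavailable, but for the flat $\mathbb{H}^n$ statement at hand your route is cleaner. The one external input you flag, $\det(\overline{A}^t H A)=\det H$ for $A\in\text{U}_{\mathbb{H}}(n)$, is exactly what the paper also cites (Theorem~1.1.9 in \cite{alesker1}), so that step is on equal footing in both proofs.
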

\begin{proof}In the case of $n=1$, by (\ref{2.1000}) we get
\begin{equation}\label{eq:A-laplace}\begin{aligned}\triangle_1f&=\frac{1}{2}\sum_{i,j=0,1}\delta^{ij}_{01}\left(\nabla_{i0 }\nabla_{j1 }f-\nabla_{i1 }\nabla_{j0 }f\right)
=\nabla_{00 }\nabla_{11 }f-\nabla_{01 }\nabla_{10 }f\\
&=(\partial
_{x_0}+\textbf{i}\partial_{x_1})(\partial_{x_0}-\textbf{i}\partial_{x_1})f-(-\partial
_{x_2}-\textbf{i}\partial_{x_3})(\partial _{x_2}-\textbf{i}\partial_{x_3})f=\triangle_{q_1}f,
\end{aligned}
\end{equation}where $\triangle_{q_1}$ is the Laplace operator on $\mathbb{H}$. Now assume that the identity (\ref{2.17})
holds already for $n-1$.\par
Since the linear combination of delta
functions are dense in the space of general functions (see Appendix A in \cite{alesker1} for the proof), it suffices
to prove (\ref{2.17}) in the case of $f_1(q)=\delta_{L}$, where $L$
is a fixed hyperplane $\{\sum_i a_iq_i=0\}$. That is,
\begin{equation}\label{2.18}\triangle_n(\delta_{L},f_2\ldots f_n)(q)=n!~\text{det}(\delta_{L},f_2,\ldots,f_n)(q).
\end{equation}\par
Note that there exists a  unitary matrix $A\in \text{U}_{\mathbb{H}}(n)$ such that the
hyperplane $L=\{\sum_i a_iq_i=0\}=\{q'_1=0\}$, where $q=Aq'$. We
can define new functions $f_i'$ by the formula $f_i'(q'):=f_i(Aq')$,
for $i=1,\ldots,n$. Denote $B'_i=\left(\frac{\partial^2f'_i}{\partial
q'_j\partial
\overline{q}'_k}(q')\right),~~~~~~~B_i=\left(\frac{\partial^2f_i}{\partial
q_j\partial \overline{q_k}}(Aq')\right)$, $i=1,\ldots,n$. Then by
noting that $B'_i=\overline{\left(\frac{\partial^2f'_i}{\partial
\overline{q}'_j\partial q'_k}(q')\right)}$, we can get
$B'_i=\overline{A}B_iA^t$ by (\ref{A*A}). So
\begin{equation*}\lambda_1B_1'+\ldots+\lambda_nB_n'=\overline{A}(\lambda_1B_1+\ldots+\lambda_nB_n)A^t.\end{equation*}
It follows from Theorem 1.1.9 in \cite{alesker1}
that\begin{equation*}\text{det}(\lambda_1B_1'+\ldots+\lambda_nB_n' )=\text{det}(\overline{A}~A^t)\text{det}(\lambda_1B_1+\ldots+\lambda_nB_n)=\text{det}(\lambda_1B_1+\ldots+\lambda_nB_n).
\end{equation*} By definition of the mixed discriminant,
det$(f_1,\ldots,f_n)(Aq')=\text{det}(f_1',\ldots,f_n')(q')$. Hence
\begin{equation*}\begin{aligned}\text{det}(\delta_{\{\sum_i
a_iq_i=0\}},f_2,\ldots,f_n)(q)&=\text{det}(\delta_{\{\sum_i
a_iq_i=0\}},f_2,\ldots,f_n)(Aq')\\
&=\text{det}(\delta_{\{q_1'=0\}},f'_2,\ldots,f'_n)(q').
\end{aligned}\end{equation*}
On the other hand, it follows from Corollary \ref{c2.2} that
\begin{equation*}\triangle_n(\delta_{\{\sum_i
a_iq_i=0\}},f_2,\ldots, f_n)(q)=\triangle_n(\delta_{\{q'_1=0\}},f'_2,\ldots,f'_n)(q').\end{equation*}Therefore it suffices to prove
(\ref{2.18}) in the case $L=\{q_1=0\}$.\par Note that
$\frac{\partial^2\delta_L}{\partial q_j\partial\overline{q}_k}=0$
unless $j=k=1$, that is,
\begin{equation*}\left(\frac{\partial^2\delta_L}{\partial
q_j\partial\overline{q}_k}\right)=\left(
                                    \begin{array}{cccc}
                                      \triangle_{q_1}\delta_L & 0 & \ldots & 0 \\
                                      0 & 0 &   &   \\
                                      \vdots &   & \ddots &   \\
                                      0 &   &   &0 \\
                                    \end{array}
                                  \right),
\end{equation*}where $\triangle_{q_1}$ is the Laplace operator of
$q_1$. By Proposition 1.1 in \cite{alesker1}, we have
\begin{equation}\label{1/n}n~\text{det}(\delta_L,f_2,\ldots,f_n)=\triangle_{q_1}\delta_L\text{det}(C_2,\ldots,C_n),\end{equation}where
$C_i$ is the $(n-1)$ matrix $\left(\frac{\partial^2f_i}{\partial
q_j\partial\overline{q}_k}\right)_{j,k=2}^n$, $i=2,\ldots,n$. Then by (\ref{1/n}) we have
\begin{equation*}\int_{\mathbb{H}^n} h~\text{det}(\delta_{L},f_2,\ldots,f_n)=\int_{\mathbb{H}^n} h\frac{1}{n}\triangle_{q_1}\delta_L~\text{det}(C_2,\ldots,C_n)
=\int_{L} \frac{1}{n}\triangle_{q_1}\left[h~\text{det}(C_2,\ldots,C_n)\right]|_{q_1=0},
\end{equation*}for any test function $h$. Here   integrals concerning  generalized functions are as in \cite{alesker1}. It is easy to see that the last integral depends on not
more than $2$-order drivatives of $f_2,\ldots,f_n$ in the direction
$q_1$. Thus, for $i=2,\ldots,n$, we can assume that
$f_i(q_1,\ldots,q_n)=p_i(q_1)\widehat{f}_i(q_2,\ldots,q_n),$ where
$p_i(q_1)$ is a polynomial not more than 2-order depending only on
$q_1$, and $\widehat{f}_i(q_2,\ldots,q_n)$ depends only on
$q_2,\ldots,q_n$.\par Denote by $\widehat{C_i}$ the
$(n-1)$-matrix:$\left(\frac{\partial^2\widehat{f}_i(q_2,\ldots,q_n)}{\partial
q_j\partial \overline{q}_k}\right)_{j,k=2}^n$, $i=2,\ldots,n$. It
follows that
\begin{equation*}\text{det}(C_2,\ldots,C_n)=p_2(q_1)\ldots
p_n(q_1)\text{det}(\widehat{C_2},\ldots,\widehat{C_n})=p_2(q_1)\ldots
p_n(q_1)\text{det}(\widehat{f}_2,\ldots,\widehat{f}_n).\end{equation*} So we
have
\begin{equation}\label{2.20}\begin{aligned}\int_{\mathbb{H}^n} h~\text{det}(\delta_{L},f_2,\ldots,f_n)(q)
=&\int_{L} \frac{1}{n}\triangle_{q_1}\left[h~p_2(q_1)\ldots
p_n(q_1)\right]|_{q_1=0}\text{det}(\widehat{f}_2,\ldots,\widehat{f}_n)\\
=&\int_{L} \frac{1}{n!}\triangle_{q_1}\left[h~p_2(q_1)\ldots
p_n(q_1)\right]|_{q_1=0}\triangle_{n-1}(\widehat{f}_2,\ldots,\widehat{f}_n),
\end{aligned}\end{equation}
where the last identity follows from our inductive assumption.\par On
the other hand, by noting that $\triangle_{ij}\delta_{L}=0$ unless
$\{i,j\}=\{0,1\}$, we have
\begin{equation*}\begin{aligned}\int_{\mathbb{H}^n} h~\triangle_n(\delta_{L},f_2,\ldots,f_n)&=\int_{\mathbb{H}^n} h~\sum_{i_1,j_1,\ldots}\delta^{i_1j_1\ldots
i_nj_n}_{01\ldots(2n-1)}\triangle_{i_1j_1}\delta_L\triangle_{i_2j_2}f_2\ldots\triangle_{i_nj_n}f_n\\&=\int_{\mathbb{H}^n} h~2\sum_{i_2,j_2,\ldots}\delta^{01i_2j_2\ldots
i_nj_n}_{0123\ldots(2n-1)}\triangle_{01}\delta_L\triangle_{i_2j_2}f_2\ldots\triangle_{i_nj_n}f_n\\
 &=\int_{\mathbb{H}^n} h~\triangle_{q_1}\delta_L~p_2(q_1)\ldots
p_n(q_1)\sum_{i_2,j_2,\ldots }\delta^{i_2j_2\ldots
i_nj_n}_{23\ldots(2n-1)}\triangle_{i_2j_2}\widehat{f}_2\ldots\triangle_{i_nj_n}\widehat{f}_n\\
&=\int_{L} \triangle_{q_1}\left[h~p_2(q_1)\ldots
p_n(q_1)\right]|_{q_1=0}\sum_{i_2,j_2,\ldots }\delta^{i_2j_2\ldots
i_nj_n}_{23\ldots(2n-1)}\triangle_{i_2j_2}\widehat{f}_2\ldots\triangle_{i_nj_n}\widehat{f}_n\\
&=\int_{L} \triangle_{q_1}\left[h~p_2(q_1)\ldots
p_n(q_1)\right]|_{q_1=0}\triangle_{n-1}(\widehat{f}_2,\ldots,\widehat{f}_n)\\
&=n!\int_{\mathbb{H}^n} h~\text{det}(\delta_{L},f_2,\ldots,f_n)(q),
\end{aligned}
\end{equation*}where the last identity follows from (\ref{2.20}).
\end{proof}
\begin{cor}\label{t2.2}The operator $f\mapsto\triangle_nf$
coincides with the quaternionic Monge-Amp\`{e}re operator.
\end{cor}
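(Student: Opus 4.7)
The plan is to deduce the corollary as a direct specialization of Theorem~\ref{t2.1}. Taking $f_1=f_2=\cdots=f_n=f$ in that theorem yields
\[
\triangle_n f \;=\; \triangle_n(f,f,\ldots,f)\;=\;n!\,\text{det}(f,f,\ldots,f),
\]
so the only remaining task is to identify the diagonal mixed discriminant with the Moore determinant of the quaternionic Hessian.

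To carry out that identification, I would invoke the very definition of the mixed discriminant recalled at the start of Appendix~A. Setting $A=\bigl(\partial^2 f/\partial q_j\partial\bar q_k\bigr)$ and evaluating
\[
\text{det}(\lambda_1 A+\cdots+\lambda_n A)\;=\;(\lambda_1+\cdots+\lambda_n)^n\,\text{det}(A),
\]
the multinomial theorem gives coefficient $n!$ for the monomial $\lambda_1\cdots\lambda_n$; dividing by $n!$ yields $\text{det}(A,A,\ldots,A)=\text{det}(A)$. Substituting back, $\triangle_n f=n!\,\text{det}\bigl(\partial^2 f/\partial q_j\partial\bar q_k\bigr)$, which up to the standard normalization is exactly the quaternionic Monge-Amp\`ere operator defined in the introduction via the Moore determinant.

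There is essentially no obstacle to overcome: all of the work lies in Theorem~\ref{t2.1}, whose proof treats the delta-function case on a hyperplane and uses the invariance of $\triangle$ under $\text{U}_{\mathbb{H}}(n)$ together with the inductive reduction of mixed discriminants. Once that theorem is in hand, the corollary is a one-line specialization plus the elementary remark on the mixed discriminant evaluated at a single matrix. I would therefore present the proof in a single short paragraph, citing Theorem~\ref{t2.1} for the key identity and referring to \cite{alesker1} for the diagonal value of the mixed discriminant, without any further computation.
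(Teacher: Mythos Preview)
Your proposal is correct and matches the paper's approach exactly: the paper states the corollary without proof, as an immediate consequence of Theorem~\ref{t2.1}, having already recorded at the start of Appendix~A that $\text{det}(f,\ldots,f)=\text{det}(f)$ when all arguments coincide. Your observation about the $n!$ normalization is also appropriate, since the paper's phrasing ``coincides with'' should indeed be read as equality up to this fixed constant factor.
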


\end{appendix}

 \vskip 5mm


\begin{thebibliography}{20}

\bibitem{alesker1}{\sc S. Alesker}, {Non-commmutative linear algebra and plurisubharmonic functions of quaternionic variables}, {\it
Bull. Sci. Math.} \textbf{127}(1) (2003), 1-35.

\bibitem{alesker4}{\sc S. Alesker}, {Quaternionic Monge-Amp\`{e}re equations}, {\it J. Geom. Anal.} \textbf{13} (2003), no. 2, 205-238.

\bibitem{alesker3}{\sc S. Alesker}, {Valuations on convex sets, non-commutative determinants, and pluripotential theory},{\it Adv. Math.} \textbf{195} (2005), no. 2, 561-595.


\bibitem{alesker2}{\sc S. Alesker}, {Pluripotential theory on quaternionic manifolds,} {\it J. Geom. Phys.} \textbf{62} (2012), no. 5, 1189-1206.

\bibitem{alesker7}{\sc S. Alesker and M. Verbitsky}, {Quaternionic Monge-Amp\`{e}re equation and Calabi problem for HKT-manifolds}, {\it Israel J. Math.}  \textbf{176} (2010), 109-138.



    \bibitem{alesker6} {\sc S. Alesker and M. Verbitsky}, {Plurisubharmonic functions on hypercomplex manifolds and HKT-geometry}, {\it J. Geom. Anal.} {\bf16} (2006)  375-399.

\bibitem{baston}{\sc  R. J. Baston}, {Quaternionic complexes}, {\it J. Geom. Phys.} \textbf{8} (1992), no. 1-4, 29-52.

\bibitem{bedford}{\sc E. Bedford and B.A. Taylor}, {The Dirichlet problem for a complex Monge-Amp\`{e}re equation}, {\it Invent. Math.} \textbf{37} (1976), no. 1, 1-44.

\bibitem{bed}{\sc E. Bedford and B.A. Taylor}, {A new capacity for plurisubharmonic functions}, {\it Acta Math.} \textbf{149} (1982), no. 1-2, 1-40.

    \bibitem{CMW}{\sc
   D.-C. Chang, I. Markina and W.~Wang} On the Cauchy-Szeg\"o kernel for quaternion Siegel upper half-space,  {\it Complex Anal. Oper. Theory} {\bf 7} (2013), no. 5, 1623-1654.

 \bibitem{Demailly1985}
  {\sc J. P. Demailly},
  {Mesures de Monge-Amp\`{e}re et caract\'{e}risation g\'{e}om\'{e}trique des vari\'{e}t\'{e}s alg\'{e}briques affines},
 {\it   M\'{e}m. Soc. Math. France (N.S.)} \textbf{19} (1985), 1-124.

\bibitem{Demailly1987 }{\sc J. P. Demailly}, {Mesures de Monge-Amp\`{e}re et mesures pluriharmoniques}, {\it Math. Z.} \textbf{194} (1987), 519-564.

\bibitem{Demailly1991} {\sc J. P. Demailly}, {Potential theory in several complex variables}, preprint, (1991).

\bibitem{demailly} {\sc J. P. Demailly}, {Monge-Amp\`{e}re operators, Lelong numbers and intersection
theory}, {\it  Complex analysis and geometry}, 115-193, Univ. Ser. Math.,Plenum, NewYork (1993).

\bibitem{harvey}{\sc R. Harvey}, {Holomorphic chains and their boundaries}, Several complex variables, Proc. Sympos. Pure Math., Vol. XXX, Part 1, pp. 309-382. Amer. Math. Soc., Providence, R. I., 1977.

\bibitem{harvey2}{\sc R. Harvey and A. W. Knapp}, {Positive $(p,p)$ forms, Wirtinger's inequality, and currents}, Value distribution theory, Part A, pp. 43-62. Dekker, New York, 1974.
\bibitem{harvey1}{\sc  F. R. Harvey and H. B. J. Lawson}, {An introduction to potential theory in calibrated geometry}, {\it Amer. J. Math.} \textbf{131} (2009), 893-944.
\bibitem{harvey4}{\sc  F. R. Harvey and H. B. J. Lawson}, {Duality of positive currents and plurisubharmonic functions in calibrated geometry}, {\it Amer. J. Math.} \textbf{131} (2009) 1211-1239.
\bibitem{harvey3}{\sc  F. R. Harvey and H. B. J. Lawson}, {Plurisubharmonicity in a general geometric context}, Geometry and analysis. No. 1, 363-402, Adv. Lect. Math.  {\bf 17}, Int. Press, Somerville, MA, 2011.

\bibitem{kang}{\sc Q. Kang and W. Wang}, {On Penrose integral formula and series expansion of $k$-regular functions on the quaternionic space $H^n$}, {\it J. of Geom. and Phys.} {\bf 64} (2013) 192-208.

\bibitem{klimek}{\sc M. Klimek},  {\it  Pluripotential Theory},
 Clarendon Press (1991).
\bibitem{lelong1}{\sc P. Lelong}, {Int\'{e}gration sur un ensemble analytique complexe}, {\it Bull. Soc. Math. France}, \textbf{85} (1957) 239-262.

\bibitem{lelong2}{\sc P. Lelong}, {Fonctionnelles analytiques et fonctions enti\`{e}res (n  variables),} {\it S\'{e}minaire de Math\'{e}matiques Sup\'{e}rieures}, No. 13 (\'{E}t\'{e}, 1967), Les Presses de l'Universit\'{e} de Montr\'{e}al, Montreal, Que., 1968. 298 pp.

\bibitem{lelong}{\sc P. Lelong and L. Gruman}, {Entire Functions of Several Complex Variables}, Springer-Verlag, 1986.

\bibitem{alesker8}{\sc M. Verbitsky}, {Balanced HKT metrics and strong HKT metrics on hypercomplex manifolds,} {\it Math. Res. Lett.}  \textbf{16 } (2009), no. 4, 735-752.

\bibitem{wan1}{\sc D. Wan}, {Estimates for $k$-Hessian operator and some applications}, {\it Czech. Math. J.} {\bf 63} (2013) 547-564.

\bibitem{wan2} {\sc D.~Wan and W.~Wang}, {Lelong-Jensen type formula, $k$-Hessian boundary measure and Lelong number for $k$-convex
functions}, {\it J. Math. Pures Appl.} {\bf99} (2013) 635-654.

    \bibitem{Wang3} {\sc W. Wang},
  {On non-homogeneous Cauchy-Fueter equations and Hartogs'  phenomenon in several quaternionic variables},
{\it J. Geom. Phys.} {\bf 58} (2008) 1203-1210.

 \bibitem{Wang} {\sc W. Wang},
  {The $k$-Cauchy-Fueter complex, Penrose transformation and Hartogs' phenomenon for quaternionic $k$-regular functions},
{\it J. Geom. Phys.} {\bf 60} (2010) 513-530.

 \bibitem{Wang2} {\sc W. Wang},
 {The tangential Cauchy-Fueter complex on the quaternionic Heisenberg group},
 {\it J. Geom. Phys.}  {\bf 61} (2011) 363-380.
\bibitem{wang1}{\sc W. Wang}, {On the optimal control method in quaternionic analysis,} {\it Bull. Sci. Math.} \textbf{135} (2011), no. 8, 988-1010.




\end{thebibliography}
 \end{document}